\documentclass[12pt]{amsart}
\usepackage{amsmath,amssymb,latexsym, amsthm, amscd, mathrsfs, stmaryrd} 
\usepackage[all]{xy}

\usepackage[all]{xy}
\usepackage{color}
\usepackage{hyperref}


\setlength{\hoffset}{0pt}
\setlength{\voffset}{0pt}
\setlength{\topmargin}{0pt}
\setlength{\oddsidemargin}{0in}
\setlength{\evensidemargin}{0in}
\setlength{\textheight}{8.75in}
\setlength{\textwidth}{6.5in}
\pagestyle{headings}

\theoremstyle{definition}
\newtheorem{rem}[subsubsection]{Remark}
\theoremstyle{plain}
\newtheorem{prop}[subsubsection]{Proposition}
\newtheorem{thm}[subsubsection]{Theorem}
\newtheorem{lem}[subsubsection]{Lemma}
\newtheorem{cor}[subsubsection]{Corollary}

\newcommand{\mbb}{\mathbb}
\newcommand{\mbf}{\mathbf}
\newcommand{\mrm}{\mathrm}
\newcommand{\bj}{\mbf j'}

\newcommand{\CC}{\mathscr C}

\newcommand{\e}{\mbf e}
\newcommand{\f}{\mbf f}
\newcommand{\G}{\mbf G}
\newcommand{\h}{\mbf h}
\renewcommand{\H}{\mbf H}
\newcommand{\IC}{\mrm{IC}}
\newcommand{\F}{\mathcal F}
\newcommand{\Gr}{\mrm{Gr}}

\newcommand{\M}{\mathcal M}
\newcommand{\N}{\mathcal N}
\renewcommand{\P}{\mbf P}
\newcommand{\Q}{\mathcal Q}

\renewcommand{\t}{\mbf t}
\newcommand{\U}{\mathrm U}

\newcommand{\End}{\mrm{End}}
\newcommand{\Ext}{\mrm{Ext}}
\newcommand{\Hom}{\mrm{Hom}}

\newcommand{\ro}{\mrm{ro}}
\newcommand{\co}{\mrm{co}}

\newcommand{\bU}{\mbf H}

\newcommand{\ve}{\varepsilon}

\title[]{Quasi-split symmetric pairs of  $\U(\mathfrak{sl}_n)$
\\ and \\ Steinberg varieties of classical type}

\author{Yiqiang Li}
\address{University at Buffalo\\ the State University of New York}
\email{yiqiang@buffalo.edu}

\keywords{$n$-step Steinberg varieties of classical type, Borel-Moore homology and quasi-split symmetric pairs of type $A_{n-1}$} 
\subjclass{}

\begin{document}

\begin{abstract}
We provide a Lagrangian construction for  the fixed-point subalgebra, together with its idempotent form, in a quasi-split symmetric pair of type $A_{n-1}$.
This is obtained inside the limit of a projective system of Borel-Moore homologies of the Steinberg varieties of $n$-step isotropic flag varieties.
Arising from the construction are a basis of homological origin for the idempotent form and
a geometric realization of rational modules.
\end{abstract}

\maketitle


\section*{Introduction}

The geometric study of representation theory of  Kac-Moody algebras  via Nakajima quiver varieties
has been one of the actively-pursued themes  in recent years.
A key result in this theme is a geometric realization of the universal enveloping algebra of a symmetric Kac-Moody algebra (or rather its Schur quotient)~\cite{N98}.
Remarkably, all irreducible integrable highest weight modules  are naturally realized as well from the construction.

A precursor of the above key result is Ginzburg's construction in the $\mathfrak{sl}_n$ case, which makes use of  the Borel-Moore homology of the Steinberg variety of the $n$-step flag variety of a general linear group (\cite{G91, CG97}).
Ginzburg's work in turn is inspired by the  construction of quantum $\mathfrak{gl}_n$ via perverse sheaves on an $n$-step flag variety by Beilinson-Lusztig-MacPherson~\cite{BLM}. 

As a classical analogue of ~\cite{BLM}, it is observed in~\cite{BKLW} that  the convolution algebra of perverse sheaves on  an $n$-step isotropic flag variety  
provides a realization of  the Schur quotient of a quantum version of the fixed-point subalgebra in a quasi-split symmetric pair of $\mathfrak{sl}_n$.
Shortly afterwards, it is further observed by the author that the Schur quotient of the subalgebra itself can be realized as Borel-Moore homology of the Steinberg variety of an $n$-step isotropic flag variety. 
Based on these facts,  an extension of  the above theme was proposed in~\cite{Li19a}. 
On the algebra side, Kac-Moody algebras  are replaced by their   fixed-point subalgebras  under an involution. 
On the geometry side,
 Nakajima varieties are replaced by a class of twisted quiver varieties, called  $\sigma$-quiver varieties, constructed therein as fixed-point loci of Nakajima varieties under certain symplectic involutions. 
 In particular, cotangent bundles of $n$-step isotropic flag varieties are examples of $\sigma$-quiver varieties. 
Since a Kac-Moody algebra together with its fixed-point forms a  symmetric pair,  this extended theme can be thought of as a geometric representation theory of symmetric pairs. 

The purpose of this article is to provide details for the above-mentioned link between the two subjects in the title. Indeed, we achieve more. That is, 
we  provide a construction of the universal enveloping algebra, together with its idempotent form,
of the subalgebra in a quasi-split symmetric pair of $\mathfrak{sl}_n$. This is obtained
inside the limit of a projective system of  Borel-Moore homologies of  the Steinberg varieties of $n$-step isotropic flag varieties. 
Moreover, we obtain a basis of homological origin for the idempotent form and we show that
modules arising from this construction are rational. 
Overall, this work is a classical analogue of Ginzburg's, and hence in some sense the Langlands dual of~\cite{BKLW, LW18}. It sits in between the works~\cite{BKLW, Li19a}  and thus is likely to shed new light on the extended theme  in~\cite{Li19a}, especially on the main conjecture therein which draws a link between a quasi-split symmetric pair of type $ADE$ and a $\sigma$-quiver variety. 

Just as in~\cite{BLM, CG97}, the above projective limit is  a substitute for the non-existent Borel-Moore homology of 
Steinberg varieties of $n$-step isotropic flag of the classical ind-groups $\mrm O_\infty$ and $\mrm{Sp}_\infty$.

As a byproduct, we  observe that Spaltenstein varieties of classical groups are isomorphic under Kraft-Procesi's row reduction and we conjecture 
it be true under the column reduction as well. 
This reduces the characterization of the Lagrangian property of Spaltenstein varieties of classical type to the minimal degeneration cases (see~\cite{Li19b, KP82}). 

Our analysis goes in parallel with, but not a copycat of,  that of Ginzburg. There are a few completely non-trivial modifications. 
The first one is the verification of the rank-one nonhomogeneous Serre relations in the defining relations of the fixed point subalgebra, say $\mathfrak{sl}_n^{\theta}$.
Instead of proving it directly, which seems impossible by using only the machinery of Borel-Moore homology, we build up enough structures so that it falls out naturally. 
This treatment is  borrowed from~\cite{FLW}, and can be used to simplify the argument for $\mathfrak{sl}_2$-case  in~\cite{CG97}. 
The second one is the determination of the Chevalley generators in the $n$-being-even case. 
The naively-defined one is simply false  and the correct definition is secured by  using  the fact that 
the algebra $\mathfrak{sl}_n^{\theta}$ is a subalgebra of $\mathfrak{sl}_{n+1}^{\theta}$.
This treatment has its root in~\cite{BKLW}.
The third one is that not all finite-dimensional simple modules of $\mathfrak{sl}_n^{\theta}$ can be realized geometrically in our setting.
Instead, we show that only those rational ones appear and all rational modules appear this way. 
The fourth one is that the even orthogonal case requires extra care because the associated group is disconnected, but the difficulty disappears by bringing in a $\mbb Z/2\mbb Z$-equivariant condition. A transfer map from odd orthogonal groups to even ones of smaller ranks also simplifies the analysis in the even case. 

At some point in the stabilization process, we have to resort to the normality of certain classical nilpotent orbits.
Mysteriously, these orbits showed up as well in the study of $W$-algebras (\cite{AM18}) which suggests intricate relations between 
symmetric pairs and $W$-algebras deserving to be further explored. 

\subsection*{Acknowledgements}
We thank  Prof. Ginzburg for helpful explanations on his book~\cite{CG97}. 
We thank Xuhua He and Dan Nakano for interesting discussions. 
Main results in this paper were announced in the AMS special session ``Geometric Methods in Representation Theory,'' UC Riverside, November 9-10, 2019.
We thank the organizers for the invitation. 
This work was partially supported by the NSF  grant DMS-1801915.
We thanks the anonymous referee for a careful  proofreading and many useful suggestions.

\tableofcontents

\section{Quasi-split symmetric pair of type $A_{n-1}$}

We recall the quasi-split symmetric pairs $(\mathfrak{sl}_n,\mathfrak{sl}^{\theta}_n)$ of $\mathfrak{sl}_n$  and their idempotent forms in this section. 
We also introduce   rational  $\mathfrak{sl}^{\theta}_n$-modules, which arise naturally from geometry. 

\subsection{Presentation}
\label{Pre}

Let $n$ be an integer greater than $1$, and we fix forever 
\[
r\equiv r(n) =\lfloor n/2\rfloor.
\]
Let $\mathfrak{sl}_n=\mathfrak{sl}_n (\mbb Q)$ be the special linear Lie algebra over $\mbb Q$ of rank $n-1$ with standard Chevalley generators
$\{ e_i, f_i, h_i| 1\leq i\leq n-1\}$. Let
$\theta$ be the involution on the set $\{ 1,\cdots,n-1\}$ defined by
$\theta(i) = n-i $ for any  $1\leq i\leq n-1.$
The involution $\theta$ induces a Lie algebra  involution on $\mathfrak{sl}_n$, denoted by the same notation, by
\[
e_i\mapsto f_{\theta(i)}, f_i\mapsto e_{\theta(i)}, h_i\mapsto - h_{\theta(i)}, 1\leq i\leq n-1.
\]
Let $\mathfrak{sl}_n^{\theta}$ be the fixed-point subalgebra of $\mathfrak{sl}_n$ under $\theta$, which is known to be  reductive.
The pair  $(\mathfrak{sl}_n,\mathfrak{sl}_n^{\theta})$  is a quasi-split symmetric pair and 
further the pair $(\mathfrak{sl}_2,\mathfrak{sl}_2^{\theta})$ is  split. 
One can define in a similar way an involution $\theta$ on $\mathfrak{gl}_n$ and then
$
\mathfrak{sl}^{\theta}_n  = \mathfrak{gl}_n^{\theta} \cap \mathfrak{sl}_n$ and $\mathfrak{gl}_n^{\theta} = \mathfrak{sl}_n^{\theta}\oplus \mbb Q$
with $\mathfrak{gl}^{\theta}_n \cong   \mathfrak{gl}_{\lfloor \frac{n}{2}\rfloor} \oplus \mathfrak{gl}_{\lceil \frac{n}{2}\rceil} $.
Note that $\mathfrak{sl}^{\theta}_n \cong   \mathfrak{sl}_{\lfloor \frac{n}{2}\rfloor} \oplus \mathfrak{gl}_{\lceil \frac{n}{2}\rceil} $.
The algebra $\mathfrak{sl}_n^{\theta}$ is generated by the elements
\[
e_{i,\theta} = e_i+f_{\theta(i)}, h_{i,\theta}= h_i-h_{\theta(i)},\quad \forall 1\leq i\leq n-1.
\] 
For convenience, we set
$f_{i,\theta} = f_i+ e_{\theta(i)}$ and clearly  $e_{i,\theta} = f_{\theta(i),\theta}$. 

Let $\tau: \mathfrak{sl}_n\to \mathfrak{sl}_n$ be the Chevalley involution defined by $e_i\mapsto f_i, f_i\mapsto e_i$, and $h_i\mapsto -h_i$ for all $1\leq i\leq n-1$. Clearly, the algebra $\mathfrak{sl}_n^{\theta}$ is stable under $\tau$ and
hence $\tau$ restricts to an involution, still denoted by $\tau$,  on $\mathfrak{sl}_n^{\theta}$ defined by
\begin{align}
\label{tau}
\tau: e_{i,\theta} \mapsto f_{i,\theta}, h_{i,\theta} \mapsto -h_{i,\theta} \quad \forall 1\leq i\leq n-1.
\end{align}
Let $\U(\mathfrak{sl}_n^{\theta})$ be the universal enveloping algebra of $\mathfrak{sl}_n^{\theta}$ over $\mbb Q$.
Let $(c_{ij})_{1\leq i,j\leq n-1}$, $c_{ij}=2\delta_{i,j}-\delta_{i, j+1}-\delta_{i,j-1}$, be the Cartan matrix of $\mathfrak{sl}_n$.
Thanks to the fact that $\mathfrak{gl}_n^{\theta}=\mathfrak{sl}^{\theta}_n \oplus \mbb C$ and~\cite{LZ19}, we know that the algebra $\U(\mathfrak{sl}^{\theta}_n)$ admits a presentation
by generators and relations depending on the parity of $n$ as follows.
\begin{align}
\label{Def-odd}
\begin{cases}
h_{i,\theta} + h_{\theta(i),\theta}  =0, [h_{i,\theta}, h_{j, \theta}]=0  & \forall 1\leq i,j\leq n-1. \\
[h_{i, \theta}, e_{j,\theta}]  = (c_{ij} - c_{\theta(i), j}) e_{j,\theta}  & \forall 1\leq i, j\leq n-1.\\
[e_{i,\theta}, e_{j,\theta}]  = \delta_{i, \theta(j)} h_{i,\theta}  &  \mbox{if} \ c_{ij}=0.  \hspace{3cm} ($n$ \ \mbox{odd})\\
e_{i,\theta}^2  e_{j,\theta} - 2 e_{i,\theta} e_{j,\theta} e_{i,\theta} + e_{j,\theta} e_{i,\theta}^2 = 0 
& \mbox{if} \ c_{ij}=-1, i\neq  \theta(j). \\
e_{i,\theta}^2  e_{j,\theta} - 2 e_{i,\theta} e_{j,\theta} e_{i,\theta} + e_{j,\theta} e_{i,\theta}^2 =  - 4  e_{i,\theta}
& \mbox{if} \ c_{ij} =-1, i = \theta(j).
\end{cases}
\end{align}
\begin{align}
\label{Def-even}
\begin{cases}
h_{i, \theta} + h_{\theta(i), \theta}  =0, [h_{i,\theta},h_{j,\theta}]=0  & \forall 1\leq i, j\leq n-1. \\
[h_{i,\theta}, e_{j,\theta}]  = (c_{ij} - c_{\theta(i), j}) e_{j,\theta}  & \forall 1\leq i, j\leq n-1.\\
[e_{i,\theta}, e_{j,\theta}]  = \delta_{i, \theta(j)} h_{i,\theta}  &  \mbox{if} \ c_{ij}=0. \hspace{3cm}  ($n$ \ \mbox{even}) \\
e_{i,\theta}^2  e_{j,\theta} - 2 e_{i,\theta} e_{j,\theta} e_{i,\theta} + e_{j,\theta} e_{i,\theta}^2  = 0 
& \mbox{if} \ c_{ij}=-1, i\neq \theta(i). \\
e_{i,\theta}^2  e_{j,\theta} - 2 e_{i,\theta} e_{j,\theta} e_{i,\theta} + e_{j,\theta} e_{i,\theta}^2   =    e_{j, \theta}
& \mbox{if} \ c_{ij} =-1, i = \theta(i). 
\end{cases}
\end{align}

\begin{rem}
In the notations of~\cite{LZ19}, $h_{i,\theta}= \mbf d_i - \mbf d_{i+1}$,
$e_{i,\theta}= \mbf e_i$ and $f_{i,\theta} = \mbf f_i$.  
\end{rem}

\subsection{Lusztig form}
For each $v\in \mbb N$, we set
\[
\Lambda_{v+\infty} =
\left \{
\lambda =( \lambda_i)_{1\leq i\leq n} \in \mbb N^n| \sum_{i=1}^n \lambda_i \equiv v - \delta_{-1, (-1)^v} (\mbox{mod}\ 2n), \lambda_i =\lambda_{n+1-i}
\right \}.
\]
The set $\Lambda_{v+\infty}$ admits a partition
\begin{align}
\label{Lambda-v}
\Lambda_{v+\infty} = \sqcup_{k: v+2kn>0} \Lambda_{v+2kn}, \Lambda_{v+2kn} =\{ \lambda \in \Lambda_{v+\infty}| \sum_i \lambda_i= v+2kn-\delta_{-1,(-1)^v}\}.
\end{align}
We define an equivalence relation $\sim$ on $\Lambda_{v+\infty}$ by
\[
\lambda \sim \mu \ \mbox{if and only if} \ \lambda -\mu \equiv 0 \ (\mbox{mod} \ (2, \cdots, 2)).
\]
Let $\bar \Lambda_{v+\infty} = \Lambda_{v+\infty}/\sim$ and $\bar \lambda$ be the equivalence class of $\lambda$. 

Let $(\delta_i)_{1\leq i\leq n}$ be the standard basis of $\mbb N^n$. 

Let $\dot\U(\mathfrak{sl}^{\theta}_n)$ be the idempotent form of $\U(\mathfrak{sl}^{\theta}_n)$, similar to Lusztig's idempotent form of $\U(\mathfrak{sl}_n)$. 
This is an associative  $\mbb Q$-algebra, without unit,  with generators 
$1_{\bar\lambda}$ and  $e_{i,\theta} 1_{\bar\lambda}$, $\forall \bar \lambda \in \bar\Lambda_{v+\infty}$, with  $v \in I_{n,\ve}$
where
\begin{align}
\label{I}
I_{n, \ve} =
\begin{cases}
\{ 2 \ell | 1\leq \ell \leq   n\} & \mbox{if} \ \ve =-1\ \mbox{or}\  \ve=1, n \ \mbox{even},\\
\{ 2\ell -1 | 1\leq \ell \leq n\} & \mbox{if} \ \ve =1, n \ \mbox{odd}.
\end{cases}
\end{align} 
and as $\U(\mathfrak{sl}_n^{\theta})$-bimodule generated by $1_{\bar \lambda}$ and such that
\begin{align*}
1_{\bar \lambda} 1_{\bar \mu}= \delta_{\bar \lambda, \bar \mu} 1_{\bar \lambda},  \quad \forall \bar \lambda,\bar \mu,\\
 1_{\bar\lambda} h_{i,\theta} =
h_{i,\theta} 1_{\bar\lambda} = (\lambda_{i+1} -\lambda_i)1_{\bar \lambda},\quad \forall i, \bar\lambda,\\
e_{i,\theta} 1_{\bar\lambda}  =  1_{\overline{\lambda +  \delta_i -  \delta_{i+1} -  \delta_{\theta(i) } +  \delta_{\theta(i) + 1} }} e_{i,\theta},\  \forall i, \bar \lambda.
\end{align*}

\subsection{Rational  modules}
\label{Rep}

We define $h'_{i,\theta}$ for $1\leq i\leq r$ inductively as follows. 
\[
h'_{r, \theta} =
\begin{cases}
[e_{r, \theta}, f_{r,\theta}], &\mbox{if $n$ is odd},\\
e_{r,\theta}, &\mbox{if $n$ is even},
\end{cases}
h'_{i, \theta} = [[e_{i,\theta}, h'_{i+1,\theta}],f_{i,\theta}], \quad 1\leq i \leq r-1.
\]
Let $\mathfrak h_{\theta}$ be the Lie subalgebra generated by $h_{i,\theta}$ and $h'_{i,\theta}$ for $1\leq i \leq r$.
It is known from~\cite{LZ19} that $\mathfrak h_{\theta}$ is abelian with basis $\{h_{i,\theta}, h'_{i,\theta}| 1\leq i\leq r\}$ if $n$ odd and $\{h_{i, \theta}, h'_{i', \theta}| 1\leq i\leq r-1, 1\leq i'\leq r\}$ if $n$ is even. 
Let $\U(\mathfrak h_{\theta})$ be the universal enveloping algebra of $\mathfrak h_{\theta}$. 
The algebra $\U(\mathfrak h_{\theta})$ is a Cartan subalgebra of $\U(\mathfrak{sl}_n^{\theta})$.
For any pair $(\omega, \omega')$, where $\omega=(\omega_i), \omega'=(\omega'_i)\in \mbb Z^r$, 
we define the one dimensional $\U(\mathfrak h_{\theta})$-module $\mbb Q_{\omega, \omega'}$  by
\[
h_{i,\theta} . x= \omega_i x, h'_{i, \theta} . x = \omega'_i  x, \quad \forall x\in \mbb Q. 
\]
When $n$ is even, we must have $\omega_r=0$ since $h_{r,\theta}=0$. 
Let $\U^+$ be the subalgebra of $\U(\mathfrak{sl}_n^{\theta})$ generated by $h_{i, \theta}$, $h'_{i,\theta}$ and $e_{i,\theta}$ for $1\leq i \leq r$. 
The module $\mbb Q_{\omega,\omega'}$ is naturally a $\U^+$-module induced via the projection $\U^{+}\to \U(\mathfrak h_{\theta})$. 
Let 
\[
M(\omega,\omega') = \U(\mathfrak{sl}_n^{\theta})\otimes_{\U^+} \mbb Q_{\omega, \omega'}
\]
be the Verma module attached to the pair $(\omega,\omega')$. 
Let $\mathcal I_{\omega,\omega'}$ be the unique maximal submodule in $M(\omega, \omega')$ and let 
\begin{align}
\label{simple}
L'(\omega, \omega') =M(\omega, \omega')/\mathcal I_{\omega,\omega'}
\end{align}
be the simple quotient. 
Note that $L'(\omega, \omega') $ can be infinite dimensional. 
Thanks to~\cite{LZ19}, we know that $\U^+$ is a Borel subalgebra in $\U(\mathfrak{sl}_n^{\theta})$.
In particular, we know that, via the inclusion 
$\mathfrak{sl}_n^{\theta}\hookrightarrow \mathfrak{gl}_n^{\theta} \cong \mathfrak{gl}_{\lfloor \frac{n}{2}\rfloor} \oplus \mathfrak{gl}_{\lceil \frac{n}{2}\rceil}$,  
any finite dimensional $L'(\omega,\omega')$ is isomorphic to  $L_1\otimes L_2$
where $L_1$ and $L_2$ are finite-dimensional rational simple modules of $\mathfrak{gl}_{\lfloor \frac{n}{2}\rfloor}$ and $\mathfrak{gl}_{\lceil \frac{n}{2}\rceil}$ respectively.
In this respect, we call $L'(\omega,\omega')$ a rational module.
A finite-dimensional $\U(\mathfrak{sl}_n^{\theta})$-module is rational if it is a direct sum of rational simple modules.  
Let $\CC(\mathfrak{sl}_n^{\theta})$ be the abelian category of rational  $\U(\mathfrak{sl}_n^{\theta})$-modules.

The rational modules are infinitesimal version of rational modules of the group $\mrm{GL}_n^{\theta}$ where $\theta$ is the involution defined by
$g\mapsto JgJ$ for $J$ in (\ref{e}). 

When $n$ is odd, the quantum version of simple modules $L'(\omega, \omega')$ is studied in~\cite{W18}, which 
is denoted by $L'(\lambda, H')$ therein.

\section{Steinberg ind-varieties  $Z_{n, \infty,\ve}$ of classical type}
\label{Z}

In this section, we introduce the  $n$-step Steinberg ind-varieties of classical groups and understand their Borel-Moore homologies, as conceptually the geometric setting for $\U(\mathfrak{sl}_n^{\theta})$, via
the projective system of  Steinberg varieties of classical type together with the transfer maps. 

\subsection{Classical groups and their nilpotent orbits}
\label{classical}
Let us fix an integer $\varepsilon \in \{\pm 1\}$. 
Let $V$ be a finite dimensional complex vector space of dimension $v$. 
We assume that $V$ is equipped with a non-degenerate bilinear form $(-,-)_V$  such that
$(u,u')_V =\ve (u',u)_V$ for all $u,u'\in V$. 
We say that $V$ is equipped with an $\ve$-form.
Let $\G_{v,\ve}$ be the isometry group of $V$ with respect to the form $(-,-)_V$. 
When $\ve=1$, $\G_{v,\ve} = \mathrm O_V $ the orthogonal group of the symmetric form and when $\ve=-1$, $\G_{v,\ve} =\mrm{Sp}_V$ the symplectic group
of the skew-symmetric form. 

Let $\N_{v,\ve}$ be the variety of nilpotent elements in $\mrm{Lie} (\G_{v,\ve})$. Let $\G_{v,\ve}$ act on $\N_{v,\ve}$ by conjugation. 
Let $\P(v)$ be the set of all partitions of $v$.  Let
\[
\P_\ve (v) = 
\left \{
\mu\in \P(v)  | \# \left \{ i | \mu_i \equiv \frac{1 - \ve}{2}  \ \mbox{mod} \ 2\right \} \ \mbox{is even}
\right \},
\]
i.e., $\P_{\ve}(v)$ is  the collection of all partitions such that all even (resp. odd)  parts have even multiplicity when $\ve = 1$ (resp. $\ve= -1$). 
Due to the work of Freudenthal, Gerstenhaber and Hesselink (see~\cite{CM93}), 
it is known that the $\G_{v,\ve}$-orbits in $\N_{v,\ve}$ are parametrized by the set $\P_{\ve}(v)$ by assigning a $\G_{v,\ve}$-orbit to the Jordan type of its elements.
Let $\mathcal O_{\mu, \ve}$ be the $\G_{v,\ve}$-orbit whose elements have Jordan type $\mu$.
Let $\hat \mu$ be the dual partition of $\mu$. 
It is known (see~\cite{CM93}) that
\begin{align}
\label{dim-O}
\dim \mathcal O_{\mu,\ve} = \frac{1}{2} 
\left (
|\mu|^2 - \sum_i \hat \mu_i^2 - \ve \left  (|\mu| - \# \{ j| \mu_j \ \mbox{is odd}\}\right ) 
\right ),
\end{align}
where $|\mu|$ is the sum of all parts in $\mu$.

Define a dominance order $\leq$ on $\P(v)$ by declaring 
\[
\lambda \leq \mu \quad \mbox{if and only if} \quad \sum_{i\leq k} \lambda_i \leq \sum_{i\leq k} \mu_i, \ \forall k. 
\]
Via restriction, we have a partial order on $\P_{\ve}(v)$. 
It is known (see~\cite{CM93}) that 
\begin{align}
\label{O-order}
\mathcal O_{\lambda, \ve} \subseteq \bar {\mathcal O}_{\mu,\ve} \ \mbox{if and only if} \ \lambda \leq \mu. 
\end{align}

\subsection{$n$-Nilcone  $\N_{n, v,\ve}$ and its Slodowy slice $S$}

We define the $n$-nilcone of $\G_{v,\ve}$ to be 
\[
\N_{n, v,\ve} =\{ x\in \N_{v,\ve} | x^n=0\}.
\]
We shall study the irreducibility and normality of $\N_{n, v,\ve}$. 
Let $\G_v=\mrm{GL }(V)$ and $\N_{n, v} $ be its $n$-nilcone. 
To a partition $\mu$, let $\mathcal O_\mu$ be the nilpotent $\G_v$-conjugacy class in $\End(V)$ whose elements have Jordan type $\mu$.
By~\cite[Corollary 4.4.3]{CG97}, $\N_{n, v}=\bar {\mathcal O}_{n^k,\ell}$ where $v=nk+\ell$, $0\leq \ell <n$. 
So
\[
\N_{n, v,\ve} = \bar{\mathcal O}_{n^k, \ell} \cap \N_{v,\ve}.
\]
There is  a largest partition in $\P_\ve(v)$ dominated by $(n^k,\ell)$ (more generally  an arbitrary partition).
So the variety $\N_{n, v,\ve}$ turns out to be  a nilpotent orbit closure again and its partition type is listed as follows.
\begin{align}
\label{N-irr}
\begin{split}
\N_{n, v,\ve} & =
\begin{cases}
\bar{\mathcal O}_{n^{k-1}, (n-1), (\ell+1), \ve} & \mbox{if} \ n, k, \ell, \mbox{are odd},\\
\bar{\mathcal O}_{n^k, \ell, \ve} & \mbox{o.w.}
\end{cases}
\quad \quad \quad \mbox{if} \ \ve=-1.
\\
%
\N_{n, v,\ve} & =
\begin{cases}
\bar{\mathcal O}_{n^k, \ell, \ve} & \mbox{if}  \ n, k\ \mbox{even}\ \ell \ \mbox{odd, or,} \\
& \quad n, \ell \ \mbox{odd}, \ k \ \mbox{even}, \\
\bar{\mathcal O}_{n^{k-1}, (n-1), \ell, 1, \ve} & \mbox{if} \ n \ \mbox{even} \ k, \ell \ \mbox{odd}, \\
\bar{\mathcal O}_{n^k, (\ell-1), 1, \ve} & \mbox{if}\  n, k \ \mbox{odd}, \ \ell \ \mbox{even}, 
\end{cases}
\quad \quad \mbox{if}  \ \ve =1, v\ \mbox{is odd}.
\\
%
\N_{n, v,\ve} & =
\begin{cases}
\bar{\mathcal O}_{n^k, \ell, \ve} & \mbox{if} \ n, k, \ell \ \mbox{odd},\\
\bar{\mathcal O}_{n^k, (\ell-1), 1, \ve} & \mbox{if} \ n \ \mbox{odd}, \ k, \ell \ \mbox{even},\\
\bar{\mathcal O}_{n^{k-1}, (n-1), (\ell+1) ,  \ve} & \mbox{if} \ n, \ell \ \mbox{even} \  k \ \mbox{odd}, \\
\bar{\mathcal O}_{n^k, (\ell-1), 1, \ve} & \mbox{if} \ n, k, \ell \ \mbox{even}, \ \ell \geq 2,\\
\bar{\mathcal O}_{n^k,\ve} & \mbox{if} \ n, k\ \mbox{even}, \ell=0.
\end{cases}
\quad \mbox{if}\ \ve=1, v\ \mbox{is even}. 
\end{split}
\end{align}
Note that all orbit closures in (\ref{N-irr}) are irreducible except the last one parametrized by  a very even partition. 
In the latter case, the orbit closure is a union of two irreducible components of pure dimension. 
This concludes the analysis of the irreducibility of $\N_{n, v,\ve}$. 
Further, the description (\ref{N-irr}) can be used to determine the normality of $\N_{n, v,\ve}$.
As such, we have 

\begin{prop}
\label{N-irr-norm}
The $n$-nilcone $\N_{n, v,\ve}$ is irreducible and normal except the very even case:  $\ve=1$, $v=kn$ and $k, n $ are even. 
In the very even case, $\N_{n, v,\ve}$ is a union of two normal irreducible components of pure dimension.
\end{prop}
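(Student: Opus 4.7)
The plan is to deduce both claims from the partition description in (\ref{N-irr}), combined with the Freudenthal--Gerstenhaber--Hesselink classification recalled before (\ref{dim-O}) and Kraft--Procesi's analysis~\cite{KP82} of the normality of nilpotent orbit closures in classical Lie algebras. Since (\ref{N-irr}) already identifies $\N_{n,v,\ve}$ with an explicit orbit closure $\bar{\mathcal O}_{\mu,\ve}$ (or a union of two such in one branch), the remaining work is a finite case check.

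For irreducibility and pure dimension, I would scan the branches of (\ref{N-irr}) one by one. In every branch except the last orthogonal one, the partition $\mu$ attached to $\N_{n,v,\ve}$ is not very even: a tail part $1$ or $n-1$, or an odd $\ell$, appears with odd multiplicity, so $\bar{\mathcal O}_{\mu,\ve}$ is a single irreducible closed subvariety of $\N_{v,\ve}$ and $\N_{n,v,\ve}$ is irreducible. In the remaining branch, where $\ve=1$, $v=nk$ with $n,k$ both even, the partition $(n^k)$ is very even and the orbit $\mathcal O_{n^k,1}$ is the disjoint union of the two standard ``very even'' components $\mathcal O_{n^k,1}^{\mrm I}$ and $\mathcal O_{n^k,1}^{\mrm{II}}$ of the common dimension predicted by (\ref{dim-O}); their closures form the two irreducible components of $\N_{n,v,\ve}$ and furnish the pure dimension.

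For normality, I would apply Kraft--Procesi's criterion, which characterizes normality of $\bar{\mathcal O}_{\mu,\ve}$ through the absence of certain forbidden minimal degenerations of $\mu$, and then run through the branches of (\ref{N-irr}). Every partition appearing has the rigid shape ``a block of $n$'s followed by a short tail of parts at most $n-1$'', and in fact $\mu$ is by construction the largest partition in $\P_\ve(v)$ dominated by $(n^k,\ell)$, so its parts cluster as tightly as the parity constraints permit. Any minimal degeneration of $\mu$ either stays inside the constant block (where the transition $(\ldots,n,n,\ldots)\to(\ldots,n+1,n-1,\ldots)$ is blocked by $\mu\in\P_\ve(v)$) or acts on the short tail; a direct comparison against the Kraft--Procesi tables rules out the obstructing patterns in each branch. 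The two very-even components are handled identically with the partition $(n^k)$.

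The main obstacle I foresee is the bookkeeping for the orthogonal case: six parity branches of (\ref{N-irr}) must be checked one by one against Kraft--Procesi's list of bad degenerations. This is streamlined but not eliminated by the maximality of $\mu$ inside $\P_\ve(v)$ and the rigidity of its shape. The symplectic side is comparatively painless because the $\ve=-1$ partitions arising in (\ref{N-irr}) sit squarely in the normal range of Kraft--Procesi's criterion for $\mathfrak{sp}_V$.
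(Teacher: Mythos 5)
Your plan is essentially the paper's proof: both deduce irreducibility directly from the explicit orbit-closure description in (\ref{N-irr}), handle the very even branch by splitting $\mathcal O_{n^k,1}$ into its two $\mathrm{SO}$-orbits of equal dimension, and verify normality branch-by-branch via Kraft--Procesi's $\ve$-degeneration criterion~\cite[Thm.~16.2]{KP82}. The only real difference is bookkeeping: the paper outsources most of the normality checks to the tables of~\cite{AM18} (Tables 5--7) and only computes the remaining orthogonal branches directly, reading off local singularity types $a,b,c,d$, whereas you propose to run Kraft--Procesi uniformly by exploiting the ``block of $n$'s plus short tail'' shape and the maximality of $\mu$ in $\P_\ve(v)$.

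One small caution on your heuristic: the parenthetical about the move $(\ldots,n,n,\ldots)\to(\ldots,n+1,n-1,\ldots)$ is pointing in the wrong direction --- that is a move \emph{up} in dominance, not a degeneration below $\mu$. The relevant minimal degenerations for the normality criterion are those $\nu\lessdot\mu$ inside $\P_\ve(v)$, and within the constant $n$-block there simply are none; the ones that matter happen at the block/tail interface and inside the tail, which is what you actually need to feed into the Kraft--Procesi tables. That correction made, your argument is sound, but at the level of detail given it is still a plan for the finite case check rather than the check itself --- which is also the level of detail the paper supplies, modulo its citation to~\cite{AM18}.
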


\begin{proof}
The irreducibility property follows from the above analysis. 

For normality, one can use Kraft-Procesi's $\ve$-degeneration criterion~\cite[Theorem 16.2]{KP82} to verify. 
For $\ve=-1$, the cases are verified in~\cite[Table 5, I-III]{AM18}. 
For $\ve=1$ and $v$ odd, the cases are verified in~\cite[Table 6, I-IV]{AM18}.
For $\ve=1$ and $v$ even, the first two  are verified in~\cite[Table 7, I, II]{AM18}. 
Let us verify the remaining cases. For the case $n, \ell $ even and $k$ odd,
the local singularity is of type $c$ if $n\leq \ell+4$ or of type $b$ if $n\geq \ell+4$. Hence the orbit closure is normal.
For the case $n, k, \ell$ even with $\ell \geq 2$, 
the local singularity is of type $d$ if $\ell =2$, of type $a$ if $\ell =4$, and of type $b$ if $\ell >4$. 
This again shows that the orbit closure is normal. 
It is known that each irreducible component of nilpotent orbit closure of a very even partition of form $n^k$ is normal. 
The proof is finished. 
\end{proof}

\begin{rem}
(1). Note 
that singularities in type IV, V  in $\mathfrak{sp}_{2d}$ case in~\cite{AM18}, not showing up in our setting,  is the same as that of I and II in {\it loc. cit.}, respectively, in light of~\cite[8.5]{Li19a}. 
So, essentially, all cases studied in~\cite[5.1]{AM18} for $\mathfrak{sp}_{2d}$ and $\mathfrak{so}_{2d+1}$ appear in our setting, mysteriously. 
It will be very interesting to relate
the theory of (quantum) symmetric pairs with the theory of W-algebras.

(2). When $n$ is a prime, the $n$-nilcone $\N_{n,v,\ve}$  appeared in the work~\cite{BNPP}.
\end{rem}

Let $x$ be a nilpotent element in $\mrm{Lie} (\G_{v,\ve})$. Let us fix an $\mathfrak{sl}_2$-triple $(x,y, h)$.
Let $S_x=x+\ker \mrm{ad} y$ be the associated Slodowy slice. We set
\begin{align}
\label{S}
S= \N_{n, v,\ve} \cap S_x,
\end{align}
to be the Slodowy slice of the $n$-nilcone at $x$. 
Since $\N_{n,v,\ve}$ is always a nilpotent orbit closure,
the variety $S$ is  a so-called nilpotent Slodowy slice.  
We are interested in the irreducibility of $S$. 
As  shown in~\cite[Lemma 3.1]{AM18}, $S$ is irreducible if $\N_{n,v,\ve}$, as an orbit closure,  is normal. 
Hence we have the following characterization, thanks to Proposition~\ref{N-irr-norm}. 

\begin{prop}
\label{S-irr}
The slice $S$ is irreducible except the case $\ve=1$,   $v=kn$ with $n,k$  even. 
\end{prop}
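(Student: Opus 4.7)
The plan is to reduce the irreducibility of $S$ to the normality of the ambient $n$-nilcone $\N_{n,v,\ve}$ already established in Proposition~\ref{N-irr-norm}. The key input is a general principle, recorded as~\cite[Lemma 3.1]{AM18}, asserting that if $\bar{\mathcal O}$ is a normal nilpotent orbit closure in a reductive Lie algebra and $x \in \bar{\mathcal O}$, then the transverse slice $\bar{\mathcal O} \cap S_x$ is irreducible. The remaining task is to verify that this hypothesis applies in our specific setting, and to isolate the one case where it does not.

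Outside the very even case, Proposition~\ref{N-irr-norm} identifies $\N_{n,v,\ve}$ with a single orbit closure $\bar{\mathcal O}_{\mu,\ve}$ — for the explicit partition $\mu$ enumerated in (\ref{N-irr}) — which is both irreducible and normal. Since $x \in \N_{n,v,\ve}$ by definition of the slice in (\ref{S}), and the $\mathfrak{sl}_2$-triple $(x,y,h)$ is chosen inside $\mrm{Lie}(\G_{v,\ve})$ by Jacobson--Morozov, the hypotheses of~\cite[Lemma 3.1]{AM18} are met verbatim. Hence we conclude that $S = \N_{n,v,\ve} \cap S_x = \bar{\mathcal O}_{\mu,\ve} \cap S_x$ is irreducible in all such cases. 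In the very even case $\ve=1$, $v=kn$ with $n,k$ even, Proposition~\ref{N-irr-norm} exhibits $\N_{n,v,\ve}$ as a union of two normal irreducible components, and no uniform application of the lemma is available — this is precisely why the case is excluded.

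There is no substantial obstacle beyond what was already done in Proposition~\ref{N-irr-norm}: once the $n$-nilcone is identified as a normal orbit closure, the argument for $S$ is essentially a citation of~\cite[Lemma 3.1]{AM18}. The only subtle point meriting brief verification is that our $S$ really is the transverse Slodowy slice to $\bar{\mathcal O}_{\mu,\ve}$ at $x$, which is automatic from the definition (\ref{S}) and the equality $\N_{n,v,\ve} = \bar{\mathcal O}_{\mu,\ve}$ in the non-very-even cases. Thus the conceptual content of the proposition is already bundled into the normality analysis of the preceding proposition, and the present statement becomes a clean corollary.
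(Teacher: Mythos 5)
Your proposal is correct and follows the paper's proof essentially verbatim: both reduce irreducibility of the nilpotent Slodowy slice to normality of the $n$-nilcone via \cite[Lemma 3.1]{AM18}, then invoke Proposition~\ref{N-irr-norm} to identify the sole exceptional (very even) case.
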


When $\ve=1$ and  $v=nk$ with $n,k$  even,
$\N_{n, v,\ve}$ is a union of two normal irreducible components, say $\N^{(1)}_{n, v,\ve}$ and $\N^{(2)}_{n, v,\ve}$. 
One can consider 
\[
S^{(i)} = \N^{(i)}_{n, v,\ve} \cap S_x, \quad i=1, 2.
\]
By exactly the same argument as above, we have

\begin{prop}
\label{S-irr-2}
When $\ve=1$,   $v=kn$ with $n,k$  even, $S^{(i)}$, for $i=1,2$, is irreducible.
\end{prop}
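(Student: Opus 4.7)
The plan is to mimic the proof of Proposition~\ref{S-irr} component by component, reducing the very even case to two applications of the same irreducibility criterion used there, once to each of the two components $\N^{(1)}_{n,v,\ve}$ and $\N^{(2)}_{n,v,\ve}$.

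First I would record what Proposition~\ref{N-irr-norm} has already given us in the very even case ($\ve = 1$, $v = kn$, $n, k$ even): each of the two irreducible components $\N^{(i)}_{n,v,\ve}$ is itself a normal nilpotent $\G_{v,\ve}$-orbit closure (the closure of one of the two orbits into which the very even orbit labelled by $n^k$ splits). Thus each $\N^{(i)}_{n,v,\ve}$ is of exactly the type of variety to which the irreducibility criterion for nilpotent Slodowy slices applies, namely the normal orbit closures considered in \cite[Lemma~3.1]{AM18}.

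Next I would choose the $\mathfrak{sl}_2$-triple $(x,y,h)$ so that $x$ lies in $\N^{(1)}_{n,v,\ve}\cap\N^{(2)}_{n,v,\ve}$; since the two components share every strictly smaller orbit, such an $x$ exists (and if the reader prefers the orbit through $x$ to be the generic one inside a particular component, one simply chooses $x$ in that component and uses that $S_x$ is transverse to the $\G_{v,\ve}$-orbit of $x$). With this choice, $S^{(i)}=\N^{(i)}_{n,v,\ve}\cap S_x$ is exactly the nilpotent Slodowy slice of the normal orbit closure $\N^{(i)}_{n,v,\ve}$ at $x$, so \cite[Lemma~3.1]{AM18} applies verbatim and produces the irreducibility of $S^{(i)}$ for $i=1,2$.

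The only genuinely new point compared to Proposition~\ref{S-irr} is checking that the normality input needed by the lemma really is available for each component separately. That is precisely the content of the last assertion of Proposition~\ref{N-irr-norm} (``a union of two normal irreducible components''), which I would flag as the crux; everything else is a direct invocation of the Slodowy-slice irreducibility criterion on each component in turn.
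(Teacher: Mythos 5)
Your proposal matches the paper's own argument: Proposition~\ref{N-irr-norm} supplies that each component $\N^{(i)}_{n,v,\ve}$ is a normal nilpotent orbit closure, and then \cite[Lemma~3.1]{AM18} (the same criterion used to prove Proposition~\ref{S-irr}) is applied to each component in turn. One small remark: the paragraph in which you ``choose'' the $\mathfrak{sl}_2$-triple so that $x$ lies in $\N^{(1)}\cap\N^{(2)}$ is not needed and slightly misreads the setup, since $x$ is already fixed earlier in the section rather than being at your disposal; if the given $x$ happens to miss a component, the corresponding slice $S^{(i)}$ is empty and the irreducibility claim is vacuous, so no choice is required. The essential content is exactly what you flag at the end: the per-component normality from Proposition~\ref{N-irr-norm} feeding into the Slodowy-slice irreducibility criterion.
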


We end this section with a remark that nilpotent Slodowy slices are not irreducible in general, see~\cite[2.4]{FJLS15}.

\subsection{Stabilization of  $\N_{n, v, \ve}$}
\label{Stabilization}

With Propositions~\ref{S-irr} and~\ref{S-irr-2} in hand, we are ready to discuss the stabilization of $\N_{n, v,\ve}$ as $v\to \infty$.
Our treatment follows closely the approaches in~\cite[4.4]{CG97} for  the $\G_v$ case with a new insight from ~\cite{LW18}. 

Let $e$ be an  $n\times n$ Jordan block and $J$ be an $n\times n$ anti-diagonal matrix as follows. 
\begin{align}
\label{e}
e=
\begin{bmatrix}
0 & 1 & 0 & \cdots  & 0\\
0 & 0 & 1 & \cdots & 0 \\
&&& \cdots\\
0 & 0 & 0 & \cdots & 1\\
0 & 0 & 0 & \cdots & 0
\end{bmatrix}
\quad
J = 
\begin{bmatrix}
0 & \cdots & 0 & 0 &1\\
0 & \cdots & 0 & 1 & 0\\
&\cdots\\
0 &  \cdots  & 0 & 0 & 0\\
1 & \cdots  & 0 & 0& 0
\end{bmatrix}.
\end{align}
On $\mbb C^{2n}$, we define an $\ve$-form by the following matrix
\begin{align}
\label{Me}
M_{\ve} =
\begin{bmatrix}
0 & J \\
\ve J & 0 
\end{bmatrix}.
\end{align}
Implicitly, we fix a basis $\{ a_i, b_i\}_{1\leq i\leq n}$  of $\mbb C^{2n}$ such that $(a_i, b_j)=\delta_{ij}$,
$(a_i, a_j) =0=(b_i,b_j)$ for all $1\leq i, j \leq n$. 
Let
\[
e_{\ve} = 
\begin{bmatrix}
e & 0 \\
0 & - e
\end{bmatrix}.
\]
Then $e_\ve \in \mrm{Lie} (\G_{2n,\ve})$ and $e_{\ve}$ is nilpotent of type $n^2$.

We define a form on $V\oplus \mbb C^{2n}$ to be the direct sum of the forms on $V$ and $\mbb C^{2n}$. Then the assignment
$x\mapsto x\oplus e_{\ve}$ defines a closed immersion
\begin{align}
\label{i-N}
i_{\ve} : \N_{n, v, e}\to \N_{n, v+2n, \ve}, \quad x\mapsto x\oplus e_{\ve}. 
\end{align}
Similarly, the rule $g\mapsto g\oplus 1$ defines a group homomorphism $\G_{v,\ve} \to \G_{v+2n, \ve}$. 
By indexing appropriately the irreducible components, we may assume in the very even case that 
\begin{align}
\label{i-even}
i_{\ve} (\N^{(i)}_{n, v, \ve}) \subseteq \N^{(i)}_{n, v+2n, \ve}, \quad \forall i =1, 2. 
\end{align}

Following Chriss-Ginzburg~\cite{CG97}, if $\mathcal O=\mathcal O_x$ is a nilpotent orbit in $\N_{n, v,\ve}$, 
we write  $\mathcal O^{\dagger}= \mathcal O_{i_{\ve} (x)}$.  
If $\mathcal O$ is very even, we define similarly $(\mathcal O^{(i)})^{\dagger}$.
By the same argument as the proof of Lemma 4.4.4 in {\it loc. cit.} and  (\ref{O-order}) and (\ref{i-even}), we get
the following analogue. 

\begin{lem}
We have 
$$i_{\ve} (\mathcal O) = \mathcal O^{\dagger} \cap i_{\ve} ( \N_{n, v,\ve}).$$ 
Moreover if $\mathcal O$ is very even, then
$$i_{\ve} (\mathcal O^{(i)}) = (\mathcal O^{(i)})^{\dagger} \cap i_{\ve} ( \N_{n, v,\ve}), \forall i=1, 2.$$
If $\mathcal O' $ is a nilpotent orbit in $\N_{n, v+2n,\ve}$, then 
$\mathcal O^{\dagger} < \mathcal O'$ 
if and only if
there exists an orbit $\mathcal O_1\subseteq \N_{n, v, \ve}$ such that $\mathcal O'= \mathcal O_1^{\dagger}$ and $\mathcal O <\mathcal O_1$. 
\end{lem}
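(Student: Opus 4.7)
The plan is built around the elementary observation that $e_{\ve}$ has Jordan type $(n,n)$, so the block-diagonal element $x \oplus e_{\ve}$ has Jordan type $\mu \cup (n,n)$, where $\mu$ is the Jordan type of $x$. Consequently $\mathcal{O}^{\dagger}$ is the $\G_{v+2n,\ve}$-orbit (or, when very even, the designated component) of partition type $\mu \cup (n,n)$, and similarly for $(\mathcal{O}^{(i)})^{\dagger}$. Everything will then be reduced to a Jordan-type computation plus a bookkeeping of components in the very-even case.

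For the first equality, the inclusion $i_{\ve}(\mathcal{O}) \subseteq \mathcal{O}^{\dagger} \cap i_{\ve}(\N_{n,v,\ve})$ is immediate, since $g \oplus 1 \in \G_{v+2n,\ve}$ realises the $\G_{v,\ve}$-action on $\mathcal{O}$. For the reverse inclusion, write any element of the intersection as $x' \oplus e_{\ve}$ and read off from its Jordan type that $\mu(x') = \mu$, so $x' \in \mathcal{O}$ by the Freudenthal--Gerstenhaber--Hesselink classification once $\mu$ is not very even. For part (2) the same Jordan-type argument pins down the partition, and the component label is preserved by the convention (\ref{i-even}) together with the fact that $g \oplus 1$ lies in the neutral component of $\G_{v+2n,\ve}$ whenever $g$ does. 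The reverse inclusion in part (2) is the delicate point: one must verify that if $x \oplus e_{\ve}$ and $x' \oplus e_{\ve}$ are conjugate inside a fixed component of the very-even orbit $\mathcal{O}^{\dagger}$, then $x$ and $x'$ lie in the same component of $\mathcal{O}$. This reduces to showing that the centraliser of $e_{\ve}$ in $\G_{2n,\ve}$ acts trivially on the component labels of $\mathcal{O}$, which follows from the normalisation fixed in (\ref{i-even}).

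For the order statement, the direction $\Leftarrow$ is standard: $\mu < \mu_1$ in the dominance order implies $\mu \cup (n,n) < \mu_1 \cup (n,n)$, hence $\mathcal{O}^{\dagger} < \mathcal{O}_1^{\dagger}$ by (\ref{O-order}). For $\Rightarrow$, let $\lambda$ be the partition of $\mathcal{O}'$. Because $\mathcal{O}' \subseteq \N_{n,v+2n,\ve}$, each part of $\lambda$ is $\leq n$; comparing the partial sums in $\mu \cup (n,n) \leq \lambda$ at $k=1,2$ forces $\lambda_1 = \lambda_2 = n$. Writing $\lambda = (n,n) \cup \tilde{\mu}$, the parity condition defining $\P_{\ve}$ is preserved under removal of two equal parts, so $\tilde{\mu} \in \P_{\ve}(v)$; the remaining dominance inequalities collapse to $\mu \leq \tilde{\mu}$, with strictness from $\mathcal{O}^{\dagger} \neq \mathcal{O}'$. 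Setting $\mathcal{O}_1 = \mathcal{O}_{\tilde{\mu},\ve}$ yields $\mathcal{O} < \mathcal{O}_1$ and $\mathcal{O}_1^{\dagger} = \mathcal{O}'$ via the first two parts.

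The main anticipated obstacle is the very-even bookkeeping in part (2): checking that the component labelling can be chosen coherently across the inclusion $i_{\ve}$, and that the inverse recovery of components is not disrupted by the disconnectedness of $\G_{v+2n,\ve}$ when $\ve = 1$. Everything else is a direct Jordan-type or partial-sum computation.
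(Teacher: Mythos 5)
Your proposal is correct and takes essentially the same route as the paper, which simply refers to the proof of~\cite[Lemma 4.4.4]{CG97} together with~(\ref{O-order}) and~(\ref{i-even}); that argument is precisely the Jordan-type observation $\mu(x\oplus e_{\ve})=\mu(x)\cup(n,n)$ plus the dominance-order comparison that you spell out. One remark on the very-even bookkeeping, which you correctly flag as the delicate point: the cleanest reason the component label is preserved is not the normalisation~(\ref{i-even}) per se (that convention concerns the nilcone components), but the standard fact that for a very-even partition the $\mathrm O$-centraliser of a representative lies in $\mathrm{SO}$. From this, if $x_1\in\mathcal O^{(1)}$ and $x_2\in\mathcal O^{(2)}$ with $(\mathcal O^{(1)})^{\dagger}=(\mathcal O^{(2)})^{\dagger}$, one would produce an element of the $\mathrm O_{v+2n}$-centraliser of $x_1\oplus e_{\ve}$ with determinant $-1$, contradicting that $(n^2,\mu)$ is again very even; this gives the injectivity of $\mathcal O^{(i)}\mapsto(\mathcal O^{(i)})^{\dagger}$ that your argument needs.
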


In light of the dimension formula (\ref{dim-O}), we have

\begin{lem}
The dimension difference of $\mathcal O^{\dagger}$ and $\mathcal O$ is independent of the partition type.
\[
\dim \mathcal O^{\dagger} - \dim \mathcal O = \dim \mathcal O_{i_\ve (0)} = 2 ( n+v) (n-1) - \ve (n- \delta_{1, \bar n}), 
\]
where $\bar n$ is the parity of $n$. 
\end{lem}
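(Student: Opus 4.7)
The plan is a direct computation using the explicit dimension formula (\ref{dim-O}), together with the simple observation that if $\mathcal O$ has Jordan type $\mu$ (a partition of $v$ with all parts $\leq n$), then $\mathcal O^{\dagger}$ has Jordan type $\mu^{\dagger} := \mu \cup (n,n)$, obtained by adjoining two parts of size $n$ to $\mu$.

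I would carry out the calculation in four steps. First, $|\mu^{\dagger}|^2 - |\mu|^2 = (v+2n)^2 - v^2 = 4vn + 4n^2$. Second, since $\mu_i \leq n$ for every $i$ (because $x^n = 0$), the dual partition satisfies $\hat\mu_i = 0$ for $i > n$, and appending $(n,n)$ adds two boxes to each of the first $n$ columns of the Young diagram, so $\hat\mu^{\dagger}_i = \hat\mu_i + 2$ for $1 \leq i \leq n$ and $\hat\mu^{\dagger}_i = 0$ otherwise; thus
\[
\sum_i (\hat\mu^{\dagger}_i)^2 - \sum_i \hat\mu_i^2 = 4\sum_{i=1}^n \hat\mu_i + 4n = 4v + 4n.
\]
Third, $|\mu^{\dagger}| - |\mu| = 2n$. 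Fourth, the number of odd parts changes by $+2$ when $n$ is odd and by $0$ when $n$ is even, i.e., by $2\delta_{1,\bar n}$.

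Plugging these four differences into the dimension formula (\ref{dim-O}) and doubling gives
\[
2(\dim \mathcal O^{\dagger} - \dim \mathcal O) = 4vn + 4n^2 - 4v - 4n - 2\ve n + 2\ve \delta_{1,\bar n} = 4(n+v)(n-1) - 2\ve(n - \delta_{1,\bar n}),
\]
which is manifestly independent of $\mu$ and yields the claimed formula after dividing by two. For the second assertion, one specializes to $x = 0$, whose $\G_{v,\ve}$-orbit is $\{0\}$ (of Jordan type $(1^v)$); then $i_\ve(0)$ has Jordan type $(n^2,1^v)$, so the identity $\dim \mathcal O^{\dagger} - \dim \mathcal O = \dim \mathcal O_{i_\ve(0)} - 0 = \dim \mathcal O_{i_\ve(0)}$ follows automatically from the $\mu$-independence just proved.

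The entire argument is a bookkeeping exercise, and I do not expect any genuine obstacle. The only place where care is needed is the odd-part count: one must remember that attaching two copies of the single part $n$ contributes $2$ odd parts precisely when $n$ itself is odd, which is what produces the $\delta_{1,\bar n}$ correction on the right-hand side. Everything else follows from the formula for $\mu^{\dagger}$ and the bound $\mu_1 \leq n$.
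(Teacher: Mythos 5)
Your proof is correct and takes essentially the same approach as the paper: both expand the dimension formula (\ref{dim-O}) for the Jordan type $(n^2,\mu)$, using the key fact that all parts of $\mu$ are at most $n$ so that the dual partition is $\hat\mu_i + 2$ for $1 \le i \le n$ and zero beyond. Your explicit tracking of the four difference terms (and the explicit observation that the sum $\sum_i(\hat\mu^\dagger_i)^2$ must be truncated at $i=n$) is slightly more careful than the paper's one-line expansion, but the substance is identical.
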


\begin{proof}
If $\mathcal O$'s partition type is $\mu$, then the partition type of $\mathcal O^{\dagger}$ is $(n^2, \mu)$.
Applying (\ref{dim-O}), we see that the dimension of $\mathcal O^{\dagger}$ is equal to
\begin{align*}
\dim \mathcal O^{\dagger} & = \frac{1}{2}
\left (
(2n+v)^2 - \ve (2n+v) - \sum_i (\hat \mu_i+2)^2 + \ve \# \{ i | \mu_i \mbox{is odd} \}  + \ve 2 \delta_{1,\bar n}
\right )\\
& =
\frac{1}{2}
\left (
v^2 -\ve v + 4nv + 4n^2   - \ve 2n  - \sum_i \hat \mu^2_i - 4 v - 4n  + \ve \# \{ i | \mu_i \mbox{is odd} \}  + \ve 2 \delta_{1,\bar n}
\right )\\
& = \dim \mathcal O +2 ( n+v) (n-1) - \ve (n- \delta_{1, \bar n})\\
&= \dim \mathcal O + \dim \mathcal O_{i_{\ve}(0)},
\end{align*}
where the last equality is from (\ref{dim-O}) and the fact that the partition type of $i_{\ve}(0)$ is  ($n^2, 1^{v}$).
\end{proof}

Let us fix an $\mathfrak{sl}_2$-triple $(i_\ve(x), y, h)$ as the direct sum of 
$\mathfrak{sl}_2$-triples of $x$ and $e_{\ve}$. 
Let $S_{i_{\ve}(x)}$ be the associated Slodowy slice. Similar to (\ref{S}), we consider the nilpotent Slodowy slice
\begin{align}
S^{\dagger} = \N_{n,v+2n,\ve} \cap S_{i_{\ve}(x)}. 
\end{align}

The following proposition shows that $S$ in (\ref{S}) is isomorphic to $S^{\dagger}$ with compatible stratifications by nilpotent orbits.

\begin{prop}
\label{S=S}
We have $i_{\ve} (S) = S^{\dagger}$. 
For any orbit $\mathcal O_1\subseteq \N_{n, v,\ve}$, there is an equality
$i_\ve (S\cap \mathcal O_1) = S^{\dagger} \cap \mathcal O_1^{\dagger}$.
\end{prop}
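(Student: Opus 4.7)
The plan is threefold: verify the inclusion $i_\ve(S) \subseteq S^\dagger$ by a direct linear-algebra computation, check that $\dim S = \dim S^\dagger$ via the dimension formula~(\ref{dim-O}) together with the dimension-difference lemma preceding the proposition, and conclude the equality by invoking the irreducibility statements of Propositions~\ref{S-irr} and~\ref{S-irr-2}. The stratum-wise refinement then falls out of the orbit-level lemma identifying $i_\ve(\mathcal{O}_1) = \mathcal{O}_1^\dagger \cap i_\ve(\N_{n,v,\ve})$.

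For the containment, I would pick $z \in S$ and observe that $i_\ve(z) = z \oplus e_\ve$ is $n$-nilpotent (since $e_\ve^n = 0$), hence lies in $\N_{n,v+2n,\ve}$. Since the $\mathfrak{sl}_2$-triple at $i_\ve(x)$ is by construction the direct sum of one at $x$ with one at $e_\ve$, the difference $i_\ve(z) - i_\ve(x) = (z-x)\oplus 0$ is annihilated by the new $\mrm{ad}\, y$ precisely because $z - x$ is annihilated by the original $\mrm{ad}\, y$. This puts $i_\ve(z)\in S^\dagger$. In the very even regime,~(\ref{i-even}) guarantees that each irreducible component of $S$ is sent into the correspondingly-indexed component of $S^\dagger$.

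To match dimensions, I would use that a Slodowy slice is transverse to every $\G_{v,\ve}$-orbit it meets, so $\dim S = \dim \N_{n,v,\ve} - \dim \mathcal{O}_x$ and $\dim S^\dagger = \dim \N_{n,v+2n,\ve} - \dim \mathcal{O}_{i_\ve(x)}$. Applying the dimension-difference lemma both to $\mathcal{O}_x$ and to the top orbit of $\N_{n,v,\ve}$ yields $\dim \mathcal{O}_{i_\ve(x)} - \dim \mathcal{O}_x = \dim \mathcal{O}_{i_\ve(0)}$ and $\dim \N_{n,v+2n,\ve} - \dim \N_{n,v,\ve} = \dim \mathcal{O}_{i_\ve(0)}$; together these force $\dim S = \dim S^\dagger$. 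The second identity demands verifying, via the case analysis in~(\ref{N-irr}), that augmenting $v$ by $2n$ sends the top partition type of $\N_{n,v,\ve}$ to that of $\N_{n,v+2n,\ve}$ by appending $n^2$: the parities of $n$, $k$, and $\ell$ determine which branch is in effect, and in each branch the same case applies simultaneously to $v$ and to $v+2n$, so the top partition stabilizes as expected.

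To conclude, $i_\ve$ is a closed immersion, so $i_\ve(S)\cong S$ is a closed subvariety of $S^\dagger$ of equal dimension, and Propositions~\ref{S-irr} and~\ref{S-irr-2} guarantee that $S^\dagger$ carries the matching irreducible decomposition, forcing $i_\ve(S)=S^\dagger$. The orbit-wise equality then follows: any $w\in S^\dagger\cap \mathcal{O}_1^\dagger$ lies in $i_\ve(\N_{n,v,\ve})$ by the equality just established, so the orbit-level lemma forces $w=i_\ve(z)$ for a unique $z\in S\cap \mathcal{O}_1$; the reverse inclusion is trivial. The main obstacle is the case analysis within~(\ref{N-irr}), in particular handling the very even regime where both $S$ and $S^\dagger$ split into two normal irreducible components that must be correctly paired by $i_\ve$.
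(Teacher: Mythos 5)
Your proposal is correct and is essentially the argument the paper invokes, since the paper simply cites the proof of Lemma 4.4.9 in Chriss--Ginzburg (establish the inclusion, match dimensions, conclude by irreducibility via Propositions~\ref{S-irr} and~\ref{S-irr-2}) and notes that it applies verbatim. Your reconstruction fills in exactly those steps, including the component-wise bookkeeping via~(\ref{i-even}) in the very even case.
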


\begin{proof}
If $\N_{n, v,\ve}$ is not very even, the proof of~\cite[4.4.9]{CG97} applies here verbatim in light of  Proposition~\ref{S-irr}. 
In the very even case, the same argument shows that  $i_{\ve} (S^{(i)})= (S^{(i)})^{\dagger}$ for $i=1, 2$, thanks to Proposition~\ref{S-irr-2}. 
Therefore, there is
\[
i_{\ve}(S) = i_{\ve}(S^{(1)}\cup S^{(2)}) = (S^{(1)})^{\dagger} \cup (S^{(2)})^{\dagger}=(S)^{\dagger}.
\]
In a similar manner, one can show the remaining claim in this case. The proof is finished.
\end{proof}

The following is the main result in this section, which is an analogue of~\cite[4.4.16]{CG97}.

\begin{prop}
\label{U}
There is an open neighborhood $U\subseteq \N_{n,v+2n,\ve}$ of  $i_{\ve}(\N_{n,v,\ve})$, with respect to the analytic topology, such that
\[
U \cong (\mathcal O_{i_{\ve}(0)} \cap U ) \times i_{\ve} (\N_{n,v, \ve}). 
\]
Moreover, this isomorphism is compatible with the stratifications defined by  
nilpotent orbits: for any nilpotent orbit $\mathcal O \subseteq \N_{n, v,\ve}$, the above isomorphism restricts  to an isomorphism
\[
U\cap \mathcal O^{\dagger} \cong (\mathcal O_{i_{\ve}(0)} \cap U ) \times i_{\ve} (\mathcal O). 
\]
\end{prop}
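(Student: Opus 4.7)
The plan is to adapt Chriss-Ginzburg's argument in \cite[4.4.16]{CG97} for $\mathfrak{gl}_n$ to the classical-type setting, with Proposition \ref{S=S} providing the key geometric input. The core idea will be that $i_\ve(\N_{n,v,\ve})$ sits entirely inside a single Slodowy slice transverse to the orbit $\mathcal{O}_{i_\ve(0)}$, so that an equivariant slice trivialization manufactures the product structure along all of $i_\ve(\N_{n,v,\ve})$ at once.

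Concretely, I would first fix the $\mathfrak{sl}_2$-triple $(e_\ve, f_\ve, h_\ve)$ for $e_\ve$ in $\mrm{Lie}(\G_{2n,\ve})$ and extend it trivially to $(i_\ve(0),\, 0\oplus f_\ve,\, 0\oplus h_\ve)$ in $\mrm{Lie}(\G_{v+2n,\ve})$, producing the Slodowy slice $S_{i_\ve(0)}$. Applying Proposition \ref{S=S} with $x=0$ (so that $S_0=\mrm{Lie}(\G_{v,\ve})$ and consequently $S=\N_{n,v,\ve}$) would yield the identity
\[
S_{i_\ve(0)} \cap \N_{n, v+2n, \ve} = i_\ve(\N_{n,v,\ve}),
\]
and the second assertion of Proposition \ref{S=S} would simultaneously give $S_{i_\ve(0)} \cap \mathcal{O}^\dagger = i_\ve(\mathcal{O})$ for every orbit $\mathcal{O} \subseteq \N_{n,v,\ve}$. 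Thus all of $i_\ve(\N_{n,v,\ve})$ is already trapped inside one transverse slice, with the stratification matching on the nose.

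Next I would invoke Slodowy's transverse slice theorem for the orbit $\mathcal{O}_{i_\ve(0)}$: the adjoint-action map $\mu: \G_{v+2n,\ve} \times_C S_{i_\ve(0)} \to \mrm{Lie}(\G_{v+2n,\ve})$ (with $C$ the reductive $\mathfrak{sl}_2$-centralizer) is smooth onto an open neighborhood of $\mathcal{O}_{i_\ve(0)}$. After choosing an analytic local section $\sigma$ of $g \mapsto g\cdot i_\ve(0)$ on a neighborhood $V$ of $i_\ve(0)$ in $\mathcal{O}_{i_\ve(0)}$, I would set
\[
\phi: V \times i_\ve(\N_{n,v,\ve}) \to \N_{n, v+2n, \ve}, \quad (y,z) \mapsto \sigma(y)\cdot z,
\]
with image the desired neighborhood $U$. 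The transversality $T_{i_\ve(0)}\mrm{Lie}(\G_{v+2n,\ve}) = T_{i_\ve(0)}\mathcal{O}_{i_\ve(0)} \oplus T_{i_\ve(0)}S_{i_\ve(0)}$ would make the differential of $\phi$ invertible at $V\times\{i_\ve(0)\}$; to propagate invertibility to all of $V\times i_\ve(\N_{n,v,\ve})$, I would use the contracting $\mbb G_m$-action generated by $0\oplus h_\ve$, exactly as in \cite[4.4.16]{CG97}. The stratification compatibility will then follow from $\G_{v+2n,\ve}$-equivariance combined with the orbit identity above.

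The hard part will be this globalization step: ensuring that $\phi$ is an open embedding uniformly along the entire slice $i_\ve(\N_{n,v,\ve})$, not merely near $i_\ve(0)$. This should reduce to verifying that the $\mbb G_m$-action assigns weight zero on the $i_\ve(\N_{n,v,\ve})$-factor and strictly positive weights on the transverse directions, which holds by construction of the extended $\mathfrak{sl}_2$-triple (the nontrivial $h$-weights live entirely in the $\mbb C^{2n}$-block). In the very-even case $\ve=1$, $v=kn$ with $n,k$ even, the indexing convention (\ref{i-even}) would keep the two components $i_\ve(\N^{(i)}_{n,v,\ve})$ separate, so the argument applies componentwise without modification.
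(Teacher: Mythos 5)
Your proposal takes the same route as the paper: the paper's proof simply records that $\N_{n,v,\ve}$ is a cone and that $i_\ve(S)=S^\dagger$ by Proposition~\ref{S=S}, then invokes \cite[4.4.16]{CG97} verbatim, and you have correctly unpacked that reduction by taking $x=0$ so that $S_{i_\ve(0)}\cap\N_{n,v+2n,\ve}=i_\ve(\N_{n,v,\ve})$ with strata matching. One clarification on your globalization step: $i_\ve(\N_{n,v,\ve})$ is \emph{not} contained in the $\mrm{ad}(h)$-weight-zero eigenspace of $\mrm{Lie}(\G_{v+2n,\ve})$ --- each element $x\oplus e_\ve$ carries the weight-$2$ vector $e_\ve$ --- so the literal reading of ``weight zero on the $i_\ve(\N_{n,v,\ve})$-factor'' is off. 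What the Chriss--Ginzburg argument actually uses is the Kazhdan $\mbb G_m$-action $t\cdot z=t^2\,\mrm{Ad}\bigl(\exp(\log(t)h)\bigr)^{-1}(z)$, which contracts the slice $S_{i_\ve(0)}$ to $i_\ve(0)$ and on $i_\ve(\N_{n,v,\ve})$ acts by $x\oplus e_\ve\mapsto t^2x\oplus e_\ve$, i.e.\ agrees (after $t\mapsto t^2$) with the cone scaling on $\N_{n,v,\ve}$ pushed through $i_\ve$; this is precisely the ``cone'' hypothesis the paper flags before citing \cite[4.4.16]{CG97}. With that adjustment your plan carries through, including the componentwise treatment of the very-even case via (\ref{i-even}).
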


\begin{proof}
The nilpotent orbits provides an algebraic stratifications of $\N_{n, v, \ve}$ and $\N_{n, v+2n,\ve}$.
The variety $\N_{n, v,\ve}$ is still a cone and moreover $i_{\ve}(S) =S^{\dagger}$ by Proposition~\ref{S=S}. With these facts in hand, the proof of ~\cite[4.4.16]{CG97} applies here verbatim.
\end{proof}

Note that we have a direct system $(\N_{n, v+ 2nk, \ve}, i_{\ve})_{k\in \mbb N}$. Let
\begin{align}
\N_{n, v+\infty, \ve} = \varinjlim_k \ (\N_{n, v+ 2nk,\ve}, i_{\ve}).
\end{align}
be the ind-variety. We refer to~\cite[IV]{K02} for an introduction to the theory of ind-varieties.
Similarly, we have the direct limit
\[
\G_{v+\infty, \ve} =\varinjlim_k \G_{v+2nk, \ve},
\]
which is an ind-group.
There is a natural $\G_{v+\infty,\ve}$ action on $\N_{n, v+\infty, \ve}$. 
Define a map $\P_\ve(v) \to \P_\ve (v+2n) , \mu\mapsto (n^2,\mu)$. We have the direct limit
\[
\P_{\ve}(v+\infty)= \varinjlim_k \P_\ve(v+ 2nk).
\]
Let $\P_{\ve}(n, v+\infty)$ be the subset in $\P_{\ve}(v+\infty)$ consisting of all partitions of parts less than or equal to $n$.
Then the $\G_{v+\infty,\ve}$-orbits in $\N_{n, v+\infty,\ve}$ are parametrized by $\P_{\ve}(n,v+\infty)$.

\subsection{Stabilization of isotropic $n$-flag varieties and their cotangent bundles}
\label{M-stab}
Let $\F_{n, v,\ve}$ be the variety of  isotropic  $n$-step flags (or $n$-flag for short)  in $V$ of the form
\[
F = ( 0 \equiv F_0\subseteq F_1\subseteq \cdots \subseteq F_n\equiv V), 
\ F_i^{\perp} = F_{n-i} \forall 1 \leq i\leq n. 
\]
Note $\F_{n, v,\ve}$ is empty  for the case $(n, v,\ve) = (even, odd, 1)$; see (\ref{rectify}) for a new treatment.
There is a natural $\G_{v,\ve}$-action on $\F_{n,v,\ve}$.
The $\G_{v,\ve}$-orbits of $\F_{n, v,\ve}$ form  a partition  as follows.
\[
\F_{n,v,\ve} = \sqcup_{\mbf d\in \Lambda_{v}} \F_{\mbf d,\ve},
\quad
\F_{\mbf d,\ve} =\{ F\in \F_{n, v,\ve} | \dim F_i/F_{i-1} = d_i+ \delta_{i, r+1} \delta_{1,\ve} \delta_{-1,(-1)^v} \ \forall 1\leq i\leq n\}. 
\]

Recall from Section~\ref{Stabilization}, we fix a basis $\{a_i, b_i\}_{1\leq i\leq n}$ of $\mbb C^{2n}$ and define a form whose matrix under the basis is $M_{\ve}$ in (\ref{Me}). 
Fix the following $n$-step isotropic flag in $\F_{n, 2n,\ve}$:
\[
\mbf F_{\ve} =(
0 \subset \langle a_1,b_n\rangle \subset \langle a_1, a_2, b_n, b_{n-1}\rangle \subset \cdots \subset \langle a_1, \cdots, a_{n-1}, b_n,\cdots, b_2\rangle \subset \mbb C^{2n} 
). 
\]
For any flag $F\in \F_{n, v,\ve}$, let $F\oplus \mbf F_{\ve}$ be a flag in $\F_{n, v+2n,\ve}$ (with $V\oplus \mbb C^{2n}$ as the underlying space) whose $i$th step is the sum of the $i$th steps of $F$ and $\mbf F_{\ve}$. 
We define an embedding
\begin{align}
\label{i-F}
i_{\F}: \F_{n, v, \ve} \to \F_{n, v+2n, \ve}, 
\ F\mapsto F\oplus \mbf F_{\ve}.
\end{align}

Let 
\begin{align}
\label{Theta}
\Theta_{v+\infty} =
\left \{ 
A=(a_{ij})_{1\leq i,j\leq n} |  
\substack{
\sum_{1\leq i, j\leq n} a_{ij} \equiv v - \delta_{-1, (-1)^v} \ (\mbox{\tiny{mod}}\ 2n),\\  a_{ij} =a_{n+1-i,n+1-j} \in \mbb N
}
\right \}.
\end{align}
The set $\Theta_{v+\infty}$ admits a partition
\[
\Theta_{v+\infty} = \sqcup_{k: v+2nk>0} \Theta_{v+2nk}, \quad \Theta_{v+2nk} =\{ A\in \Theta_{v+\infty}| \sum_{i, j} a_{ij} = v-  \delta_{-1, (-1)^v} + 2kn\}.
\]
We define an equivalence relation on $\Theta_{v+\infty}$ by $A\sim B$ if and only if
$A\equiv B \ (\mbox{mod} \ 2I)$.
Let
\[
\overline \Theta_{v+\infty}= \Theta_{v+\infty}/\sim,
\]
and $\overline A$ be the equivalence class of $A$. 
We define two maps
\[
\ro, \co : \Theta_{v+\infty} \to \Lambda_{v+\infty},
\]
where the $i$-th entry of $\ro(A)$ (resp. $\co (A)$) is equal to 
$\sum_{l} a_{il}$ (resp. $\sum_{k} a_{ki}$).
Clearly these maps induce maps $\ro, \co : \bar \Theta_{v+\infty}\to \bar \Lambda_{v+\infty}$.

Note that the closed imbedding in (\ref{i-F}) defines a direct system $(\F_{n,v+2kn,\ve}, i_{\ve})$. 
Let
\[
\F_{n, v+\infty,\ve} =\varinjlim_k  \F_{n,v+2kn,\ve}
\]
be the ind-variety of the direct system.
The ind-group $\G_{v+\infty, \ve}$ acts transitively on $\F_{n,v+\infty,\ve}$, and induces a diagonal action on
the product $\F_{n,v+\infty,\ve} \times \F_{n,v+\infty,\ve}$. 

Given $[F, F'] \in \F_{n,v+\infty,\ve} \times \F_{n,v+\infty,\ve}$, we can define a matrix
$A_{F, F'}$ whose $(i, j)$-th entry is
\[ 
\dim \frac{F_i\cap F'_j}{ F_{i-1}\cap F'_j+F_i\cap F'_{j-1}}-
\delta_{i, r+1}\delta_{j, r+1} \delta_{-1,(-1)^v}. 
\]
It is easy to check that 
$A_{F, F'} +2I = A_{F\oplus \mbf F_\ve,F'\oplus \mbf F_{\ve}}$.
This shows that there is a well-defined map from the set of $\G_{v+\infty,\ve}$-orbits in
$\F_{n,v+\infty,\ve} \times \F_{n,v+\infty,\ve}$ to the set $\overline \Theta_{v+\infty}$. 

\begin{lem}
The rule 
$
[F, F']\mapsto  \overline A_{F, F'} 
$
defines 
a bijection 
$$\G_{v+\infty,\ve} \backslash \F_{n,v+\infty,\ve} \times \F_{n,v+\infty,\ve} \cong \overline \Theta_{v+\infty}.$$
\end{lem}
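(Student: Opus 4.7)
The plan is to reduce to the finite-level statement and then pass to the direct limit. At each finite stage $v+2kn$, I would first invoke the classical orbit parametrization: the $\G_{v+2kn,\ve}$-orbits on $\F_{n,v+2kn,\ve}\times\F_{n,v+2kn,\ve}$ are in bijection with $\Theta_{v+2kn}$ via $[F,F']\mapsto A_{F,F'}$. This is a BLM-type statement for the isotropic setting, in which the constraint $F_i^\perp=F_{n-i}$ forces the symmetry $a_{ij}=a_{n+1-i,n+1-j}$ on the matrix side, and the correction $\delta_{i,r+1}\delta_{j,r+1}\delta_{-1,(-1)^v}$ accounts for the middle factor of the Witt decomposition in the odd orthogonal case. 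One proves this by choosing, for each orbit of pairs, a basis of $V$ adapted to the bi-filtration $F_\bullet\cap F'_\bullet$ and respecting the form, reading off the intersection dimensions $a_{ij}$ as invariants and conversely reconstructing the pair from any matrix in $\Theta_{v+2kn}$.

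Next, I would leverage the compatibility $A_{F\oplus \mbf F_\ve,\,F'\oplus \mbf F_\ve}=A_{F,F'}+2I$ noted just above the lemma to descend the assignment to the direct limit. Concretely, the transition map $i_\F$ intertwines with $A\mapsto A+2I$, which is precisely the equivalence relation $\sim$ defining $\overline\Theta_{v+\infty}$, so the matrix class $\overline{A_{F,F'}}$ is independent of the representative pair within a $\G_{v+\infty,\ve}$-orbit and is compatible across the direct system. This yields a well-defined map $\G_{v+\infty,\ve}\backslash(\F_{n,v+\infty,\ve}\times\F_{n,v+\infty,\ve})\to\overline\Theta_{v+\infty}$.

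To finish, I would check bijectivity by reducing to the finite-level bijection. For surjectivity, given $\overline A$, pick any representative $A\in\Theta_{v+2kn}$ and apply the finite-level bijection to produce a pair $[F,F']$ at that level whose class maps to $\overline A$. For injectivity, suppose $\overline{A_{F,F'}}=\overline{A_{F'',F'''}}$, with $(F,F')$ at level $v+2kn$ and $(F'',F''')$ at level $v+2ln$; comparing total entry sums forces the integer shift between the matrices to equal $k-l$ (times $2I$), so applying $i_\F$ an appropriate number of times to the lower-level pair raises both pairs to a common level at which the two matrices coincide exactly, whence the finite-level injectivity furnishes an element of $\G_{v+\infty,\ve}$ intertwining them.

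The main obstacle is the finite-level orbit parametrization itself, which requires careful handling of Witt decompositions in the isotropic category and in particular the middle-degree correction when $v$ is odd and $\ve=1$. Once this is secured, the passage to the limit is essentially formal, following the template of \cite[4.4]{CG97}.
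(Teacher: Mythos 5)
Your proposal is correct and follows essentially the same route as the paper: the key step in both is the Witt-type decomposition $V=\oplus_{ij}V_{ij}$ compatible with the $\ve$-form (your ``adapted basis'' is exactly the paper's conditions (a)--(c)), with the middle-degree correction $\delta_{i,r+1}\delta_{j,r+1}\delta_{-1,(-1)^v}$ handled identically. The paper's proof proceeds by fixing a representative $A$ and working at that finite level, leaving the passage across levels implicit via the already-noted compatibility $A_{F\oplus\mbf F_\ve,F'\oplus\mbf F_\ve}=A_{F,F'}+2I$; your write-up merely makes that reduction-to-a-common-level step explicit.
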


\begin{proof}
Let $\overline A\in \overline \Theta_{v+\infty}$. Let us fix a vector space $V$ of dimension $\sum_{i,j} a_{ij}$ with an $\ve$-form. 
By the symmetry on $A$, we can decompose $V$ as $V=\oplus_{ij} V_{ij}$ such that 
\begin{itemize}
\item[(a)] $\dim V_{ij} = a_{ij}+\delta_{i, r+1}\delta_{j, r+1} \delta_{-1,(-1)^v}$,
\item [(b)]
the  restriction of the $\ve$-form on $V_{ij} \oplus V_{n+1-i,n+1-j}$, if $(i,j) \neq (n+1-i, n+1-j)$,
$V_{ij}$ if $(i,j) = (n+1-i, n+1-j)$ is non-degenerate, and moreover
\item [(c)]
$V_{ij}$ is isotropic and dual to $V_{n+1-i,n+1-j}$  if $(i,j) \neq (n+1-i, n+1-j)$.
\end{itemize}
Let $F$ (resp. $F'$) be the isotropic flag whose $i$-th step is $\oplus_{k\leq i, 1\leq j\leq n} V_{kj}$
(resp. $\oplus_{1\leq k \leq n, j\leq i} V_{kj} $). 
Then we have $A_{F, F'} =A$. This shows that the map is surjective. 

Assume $(F, F')$ and $(\tilde F, \tilde F')$ are two pairs of isotropic flags such that the associated matrices are $A$.
To the pair $(F, F')$ (resp. ($\tilde F, \tilde F')$),  decompose $V$ as $V=\oplus_{i,j}V_{ij}$ (resp. $\oplus_{ij} \tilde V_{ij}$) subject to the conditions (a)-(c).
Define isomorphisms $g_{ij}: V_{ij}\to \tilde V_{ij}$ so that 
$(g_{ij}, g_{n+1-i, n+1-j})$ (resp. $g_{ij}$)  is compatible with the $\ve$-forms on $V_{ij}\oplus V_{n+1-i,n+1-j}$  and $\tilde V_{ij}\oplus \tilde V_{n+1-i,n+1-j}$
(resp. $V_{ij}$ and $\tilde V_{ij}$)  if $(i,j) \neq (n+1-i,n+1-j)$ (resp. o.w.).
Then $g=(g_{ij})\in \G_{\dim V, \ve}$ and $g(F, F') = (\tilde F, \tilde F')$. Thus the map is injective, and the proof is finished.
\end{proof}

We shall denote $Y_{\overline A, \ve}$ the $\G_{v+\infty,\ve}$-orbit in $\F_{n,v+\infty,\ve} \times \F_{n,v+\infty,\ve}$ indexed by
$\overline A$. 
If $A\in \Theta_{v}$, we set $Y_{A,\ve} = Y_{\overline A,\ve}\cap (\F_{n, v,\ve}\times \F_{n, v,\ve})$, a $\G_{v,\ve}$-orbit indexed by $A$.

Let $\M_{n, v,\ve}$ be the cotangent bundle of $\F_{n, v,\ve}$, which consists of all pairs $(x, F)\in \N_{n, v,\ve}\times \F_{n, v, \ve}$ such that
$x(F_i) \subseteq F_{i-1}$ for all  $i$.  Consider the first projection
\begin{align}
\pi: \M_{n, v, \ve} \to \N_{n, v, \ve}, \quad (x, F) \mapsto x. 
\end{align}

Observe that $(e_{\ve}, \mbf F_{\ve}) \in \M_{n, 2n, \ve}$.  The two embeddings  $i_{\ve}$ and $i_{\F}$ in (\ref{i-N}) and (\ref{i-F}) induce 
\begin{align}
\label{i-M}
i_{\M} : \M_{n, v,\ve} \to \M_{n, v+2n, \ve}, \ (x, F) \mapsto (x\oplus e_{\ve} , F\oplus \mbf F_{\ve}).
\end{align}
Clearly, we have a commutative diagram.
\begin{align}
\label{cartesian}
\begin{CD}
\M_{n, v,\ve} @>i_{\M} >> \M_{n, v+2n, \ve}\\
@V\pi VV @V\pi VV\\
\N_{n, v,\ve} @>i_{\ve} >> \N_{n, v+2n,\ve}
\end{CD}
\end{align}

\begin{lem}
\label{pi-cartesian}
The above diagram (\ref{cartesian})  is cartesian, i.e., 
$i_{\M} ( \pi^{-1}(x)) = \pi^{-1}( i_{\ve} (x))$. 
\end{lem}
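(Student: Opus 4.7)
The easy inclusion $i_{\M}(\pi^{-1}(x)) \subseteq \pi^{-1}(i_{\ve}(x))$ is immediate from the commutativity of diagram (\ref{cartesian}), so the content lies in the reverse inclusion: any isotropic $n$-flag $\tilde F$ in $V\oplus\mbb C^{2n}$ with $(x \oplus e_{\ve})(\tilde F_i) \subseteq \tilde F_{i-1}$ for all $i$ must be of the form $\tilde F = F \oplus \mbf F_{\ve}$ for some $F \in \F_{n,v,\ve}$ with $x(F_i) \subseteq F_{i-1}$.

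The plan is to pin $\tilde F$ down from two sides. On one side, iterating the stability condition yields
\[
\tilde F_i \subseteq \ker (x \oplus e_{\ve})^i = \ker x^i \oplus \ker e_{\ve}^i = \ker x^i \oplus (\mbf F_{\ve})_i,
\]
where I use the direct computation from the matrix description in Section~\ref{M-stab} that $(\mbf F_{\ve})_i$ is precisely the kernel flag of $e_{\ve}$. On the other side, I will show by a perp-duality argument that $(\mbf F_{\ve})_i \subseteq \tilde F_i$ for every $i$; this is the one genuinely non-trivial step. To prove it, I take perpendiculars of the display above in the orthogonal direct sum form on $V \oplus \mbb C^{2n}$. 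Since $x$ lies in $\mrm{Lie}(\G_{v,\ve})$ it is anti-self-adjoint, hence $(\ker x^i)^{\perp} = x^i V$; and since $\mbf F_{\ve}$ is isotropic, $(\mbf F_{\ve})_i^{\perp} = (\mbf F_{\ve})_{n-i}$. Combining with $\tilde F_i^{\perp} = \tilde F_{n-i}$ yields $\tilde F_{n-i} \supseteq x^i V \oplus (\mbf F_{\ve})_{n-i}$, and after swapping $i \leftrightarrow n-i$ I obtain the desired inclusion $(\mbf F_{\ve})_i \subseteq \tilde F_i$.

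With both inclusions in place, I define $F_i := \tilde F_i \cap V$. The projection $q : V \oplus \mbb C^{2n} \to \mbb C^{2n}$ is forced to satisfy $q(\tilde F_i) = (\mbf F_{\ve})_i$, because it is sandwiched between $(\mbf F_{\ve})_i = q((\mbf F_{\ve})_i) \subseteq q(\tilde F_i)$ and $q(\tilde F_i) \subseteq q(\ker (x\oplus e_{\ve})^i) = (\mbf F_{\ve})_i$; a dimension count then gives $\tilde F_i = F_i \oplus (\mbf F_{\ve})_i$. The remaining verifications—that $F_0 = 0$, $F_n = V$, $x(F_i) \subseteq F_{i-1}$, and $F_i^{\perp} = F_{n-i}$—follow mechanically by restricting the corresponding data for $\tilde F$ to the first summand and using that the form on $V \oplus \mbb C^{2n}$ is the orthogonal direct sum of the two factor forms. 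The main obstacle is thus the perp-duality step isolating $(\mbf F_{\ve})_i$ inside $\tilde F_i$; once that is secured, everything else falls into place. This is the classical-type analogue of the $\mrm{GL}$-case argument in~\cite[4.4.10]{CG97}.
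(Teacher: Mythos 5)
Your proof is correct. It differs from the paper's in how the key inclusion $(\mbf F_{\ve})_i \subseteq \tilde F_i$ is established. The paper simply observes that $F^{\min}(e_{\ve}) = \mbf F_{\ve} = F^{\max}(e_{\ve})$ and then invokes the argument of \cite[4.4.23]{CG97}, which gets that inclusion directly from the image-side sandwich: the stability condition implies $\mrm{im}\,(x\oplus e_{\ve})^{n-i} \subseteq \tilde F_i$, and $\mrm{im}\,(e_{\ve}^{n-i}) = (\mbf F_{\ve})_i$, giving $(\mbf F_{\ve})_i \subseteq \tilde F_i$ by restricting to the $\mbb C^{2n}$-summand. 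You instead use only the kernel-side containment $\tilde F_i \subseteq \ker x^i \oplus (\mbf F_{\ve})_i$ and recover the other side by taking perpendiculars, relying on the anti-self-adjointness of $x$ (so $(\ker x^i)^{\perp} = x^iV$), the isotropy of $\mbf F_{\ve}$, and the isotropy $\tilde F_i^{\perp} = \tilde F_{n-i}$ of the ambient flag. These two routes are really perp-dual to one another: for an anti-self-adjoint nilpotent the kernel and image flags are perpendiculars of each other, which is why your argument and the CG97 sandwich deliver the same conclusion. The trade-off is that the paper's route is a literal carry-over from the $\mrm{GL}$-case and works without invoking the bilinear form at this step, whereas yours is native to the classical setting and makes explicit where the isotropic-flag structure enters. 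Once $(\mbf F_{\ve})_i \subseteq \tilde F_i$ is in hand, your dimension/projection argument to split $\tilde F_i = F_i \oplus (\mbf F_{\ve})_i$ and the verification that $F$ lies in $\pi^{-1}(x)$ match the paper's intent.
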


\begin{proof}
Clearly, we have $ i_{\M} ( \pi^{-1}(x)) \subseteq \pi^{-1}( i_{\ve} (x))$.
For any nilpotent element $x$ such that $x^n=0$. 
Let $F^{min}(x)$ (resp. $F^{max}(x))$  be the $n$-step flag whose $i$-th step is equal to $\mrm{im } (x^i)$
(resp. $\ker (x^{n-i})$). 
We observe that $F^{max}(e_{\ve}) = \mbf F_{\ve} = F^{min}(e_{\ve})$.
Now, the argument in the proof of 
~\cite[4.4.23]{CG97} implies that if $(x\oplus e_{\ve}, F^{\dagger})$ is in $\M_{n, v+2n,\ve}$ then there exists $F\in \pi^{-1}(x)$ such that
$F^{\dagger} = F\oplus \mbf F_{\ve}$.  Therefore,  $ i_{\M} ( \pi^{-1}(x)) \supseteq \pi^{-1}( i_{\ve} (x))$, so the lemma follows.
\end{proof}

\begin{rem}
Lemma~\ref{pi-cartesian} implies that
the Spaltenstein varieties are isomorphic under Kraft-Procesi's row reduction. 
We conjecture that the statement remains true under Kraft-Procesi's column reduction. 
See also~\cite{Li19b}.
\end{rem}

We now show that a statement similar to Proposition~\ref{U} still holds for $\M_{n,v,\ve}$.
Let $U$ be the open neighborhood in Proposition~\ref{U}. We set
$\tilde U = \pi^{-1}(U)$. Then the  proof of~\cite[Proposition 4.4.26]{CG97} 
leads to  the following statement similar to Proposition~\ref{U}.

\begin{prop}
\label{phi-U}
There is an isomorphism 
\begin{align}
\label{Qa}
 \tilde U\overset{\cong}{ \to} (\mathcal O_{i_{\ve}(0)} \cap U) \times i_{\ve}(\M_{n, v,\ve}) 
\end{align}
fitting into the following commutative diagram 
\[
\xymatrix{
\M_{n, v,\ve} \ar@{^{(}->}[r]  \ar[d]^{\pi}& \tilde U \ar[d]^{\pi} \ar@{->}[r]  &   (\mathcal O_{i_{\ve}(0)} \cap U) \times i_{\ve}(\M_{n, v,\ve}) \ar[d]^{\mrm{id}\times \pi} \\
\N_{n, v, \ve} \ar@{^{(}->}[r]   & U \ar[r]& (\mathcal O_{i_{\ve}(0)} \cap U) \times i_{\ve}(\N_{n, v,\ve}) 
}
\]
\end{prop}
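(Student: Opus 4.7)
The plan is to lift the base-level isomorphism of Proposition~\ref{U} to the cotangent bundle, combining the $\G_{v+2n,\ve}$-equivariance inherent in the slice construction with the cartesian square of Lemma~\ref{pi-cartesian}.

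First I would recall that, as in~\cite[4.4.16]{CG97}, the isomorphism $\psi\colon U \to (\mathcal O_{i_\ve(0)}\cap U)\times i_\ve(\N_{n,v,\ve})$ is produced from an analytic map $\alpha\colon U \to \G_{v+2n,\ve}$ together with a retraction $r\colon U \to i_\ve(\N_{n,v,\ve})$ satisfying $y=\alpha(y)\cdot r(y)$ and $\psi(y)=(\alpha(y)\cdot i_\ve(0),\,r(y))$. Since $\alpha$ takes values in the actual group $\G_{v+2n,\ve}$, it acts on $\F_{n,v+2n,\ve}$ as well as on the nilcone, and hence on $\M_{n,v+2n,\ve}$ through the diagonal action.

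Next, I would transport flags by $\alpha^{-1}$ in order to lift $\psi$. For $(y,F)\in \tilde U=\pi^{-1}(U)$, the pair $(\alpha(y)^{-1}\cdot y,\,\alpha(y)^{-1}\cdot F)=(r(y),\,\alpha(y)^{-1}\cdot F)$ lies in $\pi^{-1}(i_\ve(\N_{n,v,\ve}))$, which by Lemma~\ref{pi-cartesian} coincides with $i_\M(\M_{n,v,\ve})$. Hence there is a unique $(z,F_0)\in \M_{n,v,\ve}$ with $i_\M(z,F_0)=(r(y),\,\alpha(y)^{-1}\cdot F)$, and I set
\[
\tilde\psi(y,F)=\bigl(\alpha(y)\cdot i_\ve(0),\; i_\M(z,F_0)\bigr),
\]
with holomorphic inverse sending $(p,\,i_\M(z,F_0))$ to $\bigl(\psi^{-1}(p,\,i_\ve(z)),\;\alpha(\psi^{-1}(p,\,i_\ve(z)))\cdot i_\M(z,F_0)\bigr)$. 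Commutativity of the displayed diagram is then immediate: the left square coincides with~(\ref{cartesian}), and the right square reduces, via the second projection followed by $\mrm{id}\times\pi$, to the tautology $\pi\circ i_\M(z,F_0)=i_\ve(z)=r(y)$, which is precisely the second projection of $\psi(y)$.

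The main obstacle is the production of $\alpha$ and $r$ with the required properties on a suitable neighborhood $U$; this is the slice-theoretic input already supplied by Proposition~\ref{U} via~\cite[4.4.16]{CG97}. In the very even case ($\ve=1$, $v=kn$ with $n,k$ even) the argument splits, and has to be run separately on each of the two normal irreducible components singled out by Proposition~\ref{S-irr-2}, exactly as in the base case.
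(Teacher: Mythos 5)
Your proposal is correct and follows essentially the same route as the paper, which simply cites \cite[Prop.\ 4.4.26]{CG97} together with Proposition~\ref{U} and Lemma~\ref{pi-cartesian}; you have spelled out the lift of the Slodowy-slice trivialization to the cotangent bundle in exactly the intended way, using the $\alpha$-transport of flags and the cartesian square of Lemma~\ref{pi-cartesian}. (The only cosmetic slip is in the inverse formula, which is overparenthesized -- it should return the element $\alpha(\psi^{-1}(p,i_\ve(z)))\cdot i_\M(z,F_0)$ of $\M_{n,v+2n,\ve}$ directly rather than pairing it with $\psi^{-1}(p,i_\ve(z))$, since the latter is already its first coordinate.)
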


Finally, we record the dimension difference of $T^* \F_{\mbf d,\ve}$ and $T^*\F_{\mbf d+(2,\cdots, 2),\ve}$ for later usage.
It is known that 
$
\dim \F_{\mbf d,\ve} = \frac{1}{2} ( \sum_{i<k} d_i d_k - (-1)^v \ve \sum_{i< r+1} d_i).
$
So 
\begin{align}
\label{dim-F}
\dim T^* \F_{\mbf d+(2,\cdots, 2),\ve } - \dim T^* \F_{\mbf d,\ve}  =\dim \mathcal O_{i_{\ve}(0)}= 2 ( n+v) (n-1) - \ve (n- \delta_{1, \bar n}).
\end{align}
The advantage of the formula (\ref{dim-F}) is that we can write
\[
\dim \M_{n, v+2n,\ve} - \dim \M_{n, v, \ve} = \dim \mathcal O_{i_{\ve}(0)},
\]
even though we need to fix a connected component for the varieties involved.

\subsection{Steinberg ind-varieties of classical type}
\label{Z-stab}

Let
\[
Z_{n, v,\ve} = \M_{n, v, \ve} \times_{\N_{n, v,\ve}} \M_{n, v, \ve} =\{ (x, F, F') | (x, F), (x, F') \in \M_{n, v, \ve}\}
\]
be the $n$-step Steinberg variety of classical type.
For convenience, we shall simply call $Z_{n,v,\ve}$ a Steinberg variety.  
It is a Lagrangian subvariety of the symplectic manifold $\M_{n,v,\ve}\times \M_{n,v,\ve}$, up to  a conventional twist on the symplectic structure. 
The embedding $i_{\M}$ in (\ref{i-M}) induces the following cartesian diagram 
\begin{align}
\label{i-Z-1}
\begin{split}
\xymatrix{
 Z_{n, v, \ve} \ar@{->}[r]^{i_z}  \ar@{^{(}->}[d] &  Z_{n, v+2n, \ve} \ar@{^{(}->}[d]\\
 \M_{n,v,\ve}\times \M_{n, v+2n,\ve}  \ar@{->}[r] & \M_{n, v+2n,\ve}\times \M_{n, v+2n,\ve}
}
\end{split}
\end{align}
where the left vertical map is the composition
\[
Z_{n, v,\ve} \hookrightarrow \M_{n, v, \ve}\times\M_{n,v,\ve} \overset{1\times i_{\M}}{\hookrightarrow} \M_{n,v,\ve}\times \M_{n, v+2n,\ve}.
\]

Let $Z_{A,\ve}=\{ (x, F, F')\in Z_{n, v,\ve}| (F, F)\in Y_{A,\ve}\}$ for any $A\in \Theta_v$. 
Then  its closure  $\overline Z_{A,\ve}$ is an irreducible component of $Z_{n, v,\ve}$ if $(v,\ve)\neq (\mbox{even}, 1)$.
If $(v,\ve)=(\mbox{even}, 1)$, then $\overline Z_{A,\ve}$ may split into a union of irreducible components in $Z_{n, v,\ve}$. 
Clearly, one has
$
i^{-1}_z (Z_{A,\ve}) = 
Z_{A-2I,\ve}$  if $A-2I\in \Theta_{v}$
or $\O$ otherwise. 
Hence, we have
$
\overline Z_{A-2I,\ve} \subseteq i^{-1}_{z} (\overline Z_{A,\ve}).
$
Moreover the result can be refined  as follows.  

\begin{lem}
\label{i-Z}
For any $A\in \Theta_{v+2n}$, any irreducible component of 
$ i^{-1}_z (\overline Z_{A,\ve})$ of dimension $\dim Z_{n,v,\ve}$ is either
those in $\overline Z_{A-2I,\ve}$ if  $A-2I\in \Theta_{v}$ or nonexistent otherwise.
\end{lem}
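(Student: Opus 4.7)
The plan is to prove the sharper identification
\[
i_z^{-1}(\overline Z_{A,\ve})=\overline Z_{A-2I,\ve},
\]
read as empty when $A-2I\notin\Theta_v$. The lemma follows immediately, since every top-dimensional irreducible component on the left must, by maximality, be an irreducible component of $\overline Z_{A-2I,\ve}$. The strategy is to upgrade the local product decomposition of Proposition~\ref{phi-U} from the nilcone side to the Steinberg side.

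Concretely, I would set $\tilde U_Z:=(\tilde U\times_{\N_{n,v+2n,\ve}}\tilde U)\cap Z_{n,v+2n,\ve}$, an open neighborhood of $i_z(Z_{n,v,\ve})$. Applying Proposition~\ref{phi-U} to each factor of this fibre product over the base splitting of $U$ produces
\[
\tilde U_Z\cong(\mathcal O_{i_\ve(0)}\cap U)\times i_z(Z_{n,v,\ve}),
\]
in which $i_z(Z_{n,v,\ve})$ appears as the slice $\{i_\ve(0)\}\times i_z(Z_{n,v,\ve})$. The key next step is to verify that this decomposition respects the $\G_{v+2n,\ve}$-orbit stratification $\{Z_{A',\ve}\}$: combining the stratification-compatibility built into Proposition~\ref{phi-U} with the observation that $i_z$ shifts every flag-pair matrix by $+2I$, one obtains the stratumwise identification $Z_{A',\ve}\cap\tilde U_Z\cong(\mathcal O_{i_\ve(0)}\cap U)\times i_z(Z_{A'-2I,\ve})$, understood as empty when $A'-2I\notin\Theta_v$. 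Taking closures inside the open set $\tilde U_Z$ (permitted because each open stratum $Z_{A,\ve}$ is irreducible away from the very-even case) gives $\overline Z_{A,\ve}\cap\tilde U_Z\cong(\mathcal O_{i_\ve(0)}\cap U)\times i_z(\overline Z_{A-2I,\ve})$, and restricting to the slice through $i_\ve(0)$ yields the desired equality.

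The principal subtlety lies in the very-even case $(v,\ve)=(\mathrm{even},1)$, in which $\overline Z_{A,\ve}$ and $\overline Z_{A-2I,\ve}$ may each split into two irreducible components. To handle this, I would invoke the labelling compatibility fixed in (\ref{i-even}), itself a consequence of Propositions~\ref{S-irr-2} and~\ref{S=S}, so that the product decomposition pairs up components on either side of $i_z$ consistently. Once this bookkeeping is in place, the preceding argument applies component by component, giving the conclusion in full generality.
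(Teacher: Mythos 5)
The paper goes out of its way to prove only the weaker, top-dimensional version in Lemma~\ref{i-Z}, and immediately afterwards (in the remark following the lemma, around display~(\ref{i-Z-2})) flags the equality you are claiming to prove --- $i_z^{-1}(\overline Z_{A,\ve})=\overline Z_{A-2I,\ve}$ --- as only ``expected,'' noting explicitly that the analogous proof in CG97 Cor.~4.4.29 ``does not seem to apply here.'' So you are asserting exactly the statement the paper tells the reader it cannot establish, via exactly the product-decomposition route it warns is problematic. That should be a loud alarm.

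The concrete gap is the step where you claim ``$Z_{A',\ve}\cap\tilde U_Z\cong(\mathcal O_{i_\ve(0)}\cap U)\times i_z(Z_{A'-2I,\ve})$.'' Proposition~\ref{phi-U} only gives compatibility of the local trivialization with the projection $\pi$ (hence with the nilpotent-orbit stratification of the base, via Proposition~\ref{U}); it says nothing about the $\G_{v+2n,\ve}$-orbit stratification $\{Y_{A',\ve}\}$ of $\F\times\F$ that defines $Z_{A',\ve}$. To deduce the stratum-wise identification you would need to know that the (analytic-local) isomorphism of Proposition~\ref{phi-U} is realized by conjugation by an analytic family of group elements that can be chosen simultaneously compatibly with the pair-of-flags data, and this is precisely where the CG97 argument does not transfer to the classical groups (disconnectedness in the orthogonal case, the very-even splitting, and the non-uniqueness of the analytic Slodowy section all intervene). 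Your ``key next step'' is therefore an assertion, not a verification, and it is the missing heart of the argument. Incidentally, taking closures of a locally closed set inside an analytic-open $\tilde U_Z$ and then restricting to a slice needs the identity $\overline S\cap\tilde U_Z=\overline{S\cap\tilde U_Z}^{\,\tilde U_Z}$, which is fine, but this is the easy part; it does not rescue the stratum identification.

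For comparison, the paper's proof is much more elementary and bypasses the product decomposition entirely. It uses (i) that $Z_{n,v,\ve}$ is pure-dimensional with irreducible components arising from closures of $Z_{\mathcal O}\cong\G_{v,\ve}\times_{\G_{v,\ve}(x)}(\pi^{-1}(x)\times\pi^{-1}(x))$ over single nilpotent orbits $\mathcal O$; (ii) Lemma~\ref{pi-cartesian}, which says the Springer fibers $\pi^{-1}(x)$ are literally unchanged under $i_\M$; and (iii) a dimension-count-and-contradiction: any extra top-dimensional component of $i_z^{-1}(\overline Z_{A,\ve})$ would manufacture an extra top-dimensional component of $\overline Z_{A,\ve}$ itself, which is absurd. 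This only pins down the top-dimensional components, which is exactly what the lemma claims and all that the subsequent arguments (Proposition~\ref{stab} and beyond) actually require. If you want to rescue your approach, you would need to supply the missing group-theoretic enhancement of Proposition~\ref{phi-U}; as written, the proposal proves strictly more than the paper does with no justification for the crucial step.
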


\begin{proof}
It is known that $Z_{n,v,\ve}$ is pure dimensional and its irreducible components appear as the irreducible components in the closure of
$Z_{\mathcal O} =\{(x, F, F')\in Z_{n, v,\ve}| x\in \mathcal O\}$ for a nilpotent orbit in $\N_{n,v,\ve}$. 
Now $Z_{\mathcal O}$ is isomorphic to $\G_{v,\ve}\times_{ \G_{v,\ve}(x)} ( \pi^{-1}(x)\times\pi^{-1}(x))$ for a fixed $x\in \mathcal O$ and $\G_{v,\ve}(x)$ is the stabilizer of $x$ in $\G_{v,\ve}$.
A typical irreducible component would be the closure of
$\G^o_{v,\ve}\times_{\G^o_{v,\ve}(x)} \Lambda$ where the superscript $o$ denote the identity component and $\Lambda$ is an irreducible component in $\pi^{-1}(x)\times\pi^{-1}(x)$.
By Lemma~\ref{pi-cartesian}, the fiber $\pi^{-1}(x)$ remains unchanged under $i_{\M}$. 
So if there were an irreducible component of $i^{-1}_{z} (\overline Z_{A,\ve})$ of dimension $\dim Z_{n,v,\ve}$  other than those in $\overline Z_{A-2I,\ve}$,
there would be  an irreducible component of dimension $\dim Z_{n, v+2n,\ve}$ in $\overline Z_{A,\ve}$ other than those in $\overline Z_{A,\ve}$, which is absurd. 
The lemma is thus proved.
\end{proof}

\begin{rem}
One expects a stronger result than Lemma~\ref{i-Z}, i.e.,
\begin{align}
\label{i-Z-2}
 i^{-1}_{z} (\overline Z_{A,\ve})
 =
 \begin{cases}
 \overline Z_{A-2I,\ve} & \mbox{if} \ A-2I\in \Theta_{v},\\
 \O & \mbox{o.w}.
 \end{cases}
\end{align}
This is similar to~\cite[Cor. 4.4.29]{CG97}, but whose proof therein does not seem to apply here.
In any case, Lemma~\ref{i-Z} is enough for us to proceed. 
\end{rem}

Define the direct limit of  $(Z_{n, v+2nk,\ve}, i_{z})_{k\in \mbb N}$ by
\[
Z_{n, v+\infty,\ve} = \varinjlim_k (Z_{n, v+2nk,\ve},i_{z}).
\]
Note that $Z_{n,v+\infty,\ve}$ can be identified with the equivalence class
$\sqcup_k Z_{n, v+2nk,\ve}/\sim $, where the equivalence $\sim$ is defined by
for all $(x, F, F')\in Z_{n, v+2nk,\ve}$ and $(y, G, G')\in Z_{n, v+2k'n,\ve}$ for $k<k'$,
$(x, F, F')\sim (y, G, G')$ if and only if $i_{k', k}(x, F, F') =(y, G, G')$ where
$i_{k', k} : Z_{n, v+2kn, \ve} \to Z_{n, v+2k'n,\ve}$ for $k<k'$ is the transfer map, induced by $i_{z}$,  in the direct system $(Z_{n, v+2nk,\ve}, i_{z})_{k\in \mbb N}$. 
Let $[x, F, F']$ be the equivalence class of $(x, F, F')$. 
Let $i_k: Z_{n, v+2kn, \ve} \to Z_{n, v+\infty,\ve}$ be the induced embedding from the direct system $(Z_{n, v+2nk,\ve}, i_{\ve})_{k\in \mbb N}$. 

Consider the following   $n$-$step$ $Steinberg$ $ind$-$varieties$ $o \! f$ $classical$ $type$.
\begin{align}
\label{Z-infty-a}
Z_{n, \infty,\ve} : = \sqcup_{v\in I_{n, \ve}} Z_{n, v, \ve}
%
%
\end{align}
where $I_{n, \ve}$ is defined in (\ref{I}).
The direct limit $Z_{n,\infty,\ve}$ admits a Zariski topology induced from those in $Z_{n,v+2kn,\ve}$ for all $1\leq v\leq 2n$, $k\in \mbb N$.
Precisely, we claim that a subset $X\subseteq Z_{n,\infty, \ve}$ is open (resp. constructible) if the restriction $i^{-1}_k(X)$ is Zariski open (resp. constructible) in $Z_{n,v+2kn,\ve}$ for all $k$ and $v$. 
Then it is clear that the collection of all open sets in $Z_{n, \infty, \ve}$ forms the desired Zariski topology.

We are interested in describing  the irreducible components of $Z_{n,\infty,\ve}$.
Let $p: Z_{n,v+\infty,\ve} \to \F_{n,v+\infty,\ve} \times \F_{n,v+\infty,\ve}$ be the projection map
$[x, F, F']\mapsto [F, F']$. 
Let 
\[
Z_{\overline A,\ve} = p^{-1} ( Y_{\overline A,\ve}).
\]
Clearly we have 
\[
Z_{n,v+\infty,\ve} = \sqcup_{\overline A \in \overline \Theta_{v+\infty}} Z_{\overline A,\ve}. 
\]
Let $\overline Z_{\overline A,\ve}$ be the closure of $Z_{\overline A,\ve}$ in $Z_{n, v+\infty,\ve}$.
One can show that 

\begin{rem}
\label{Z-infty}
If (\ref{i-Z-2}) holds, then the subsets $\overline Z_{\overline A,\ve}$ for $\overline A\in \overline \Theta_{v+\infty}$ 
form the list of irreducible components of $Z_{n,\infty,\ve}$. 
The proof goes as follows.A closed subset $X$ in $Z_{n,\infty,\ve}$ is irreducible if and only if $i^{-1}_k(X)$ is irreducible for $k>>0$. Now apply the assumption (\ref{i-Z-2}) to obtain the claim. 
\end{rem}

\subsection{Borel-Moore homology and transfer maps}
\label{BM-TM}
Let $\H_*(X)$ be the Borel-Moore homology for a complex algebraic variety $X$ with rational coefficients. 
We write $\H_{irr}(X)$ for the subspace of $\H_*(X)$  spanned by the fundamental classes of irreducible components in $X$. 
An introduction to this homology theory can be found in~\cite[2.6]{CG97}. 
By applying restriction with supports~\cite[2.6.21]{CG97} on (\ref{i-Z-1}), we have a map
\begin{align}
\label{phi-1}
\phi_{v+2n, v} : \H_* (Z_{n, v+2n,\ve}) \to \H_{*-2\dim \mathcal O_{i_{\ve}(0)}} (Z_{n, v, \ve}).
\end{align}
We shall call $\phi_{v+2n,v}$ a transfer map. 

\begin{prop}
\label{stab}
The map
$
\phi_{v+2n, v} 
$
in (\ref{phi-1})
satisfies that 
\[
\phi_{v+2n, v} ( [ \overline Z_{A+ 2I,\ve}] ) = 
\begin{cases}
[\overline Z_{A,\ve}] & \mbox{if} \  A\in \Theta_{v}, \\
0 & \mbox{if}\ A\not\in \Theta_v.
\end{cases}
\] 
\end{prop}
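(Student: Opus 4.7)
The plan is to identify $\phi_{v+2n,v}$ with a refined-intersection pullback for the cartesian square (\ref{i-Z-1}) and then to extract the coefficients from Lemma \ref{i-Z} together with the local product decomposition of Proposition \ref{phi-U}. The bottom arrow of (\ref{i-Z-1}), namely $\M_{n,v,\ve}\times \M_{n,v+2n,\ve} \hookrightarrow \M_{n,v+2n,\ve}\times \M_{n,v+2n,\ve}$, is a regular closed embedding of codimension $c := \dim \mathcal O_{i_\ve(0)}$ by (\ref{dim-F}). Restriction with supports \cite[2.6.21]{CG97} then produces exactly the map $\phi_{v+2n,v}$, and carries the fundamental class $[\overline Z_{A+2I,\ve}]$ to a class of top Borel-Moore degree $2\dim Z_{n,v,\ve}$ supported on the preimage $i_z^{-1}(\overline Z_{A+2I,\ve})$.

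The vanishing case follows at once: if $A\notin \Theta_v$, Lemma \ref{i-Z} applied with $A+2I\in \Theta_{v+2n}$ shows that $i_z^{-1}(\overline Z_{A+2I,\ve})$ has no irreducible component of dimension $\dim Z_{n,v,\ve}$, so the top-degree image vanishes. When $A\in \Theta_v$, the same lemma identifies the top-dimensional part of $i_z^{-1}(\overline Z_{A+2I,\ve})$ with the union of the irreducible components of $\overline Z_{A,\ve}$, so $\phi_{v+2n,v}([\overline Z_{A+2I,\ve}])$ is \emph{a priori} an integer combination of the fundamental classes of those components. It remains to show that every coefficient is $1$.

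For that I would use Proposition \ref{phi-U}: on an open neighborhood, the embedding $i_\M$ is trivialized as the slice inclusion $\{e_\ve\}\times i_\ve(\M_{n,v,\ve})\hookrightarrow (\mathcal O_{i_\ve(0)}\cap U)\times i_\ve(\M_{n,v,\ve})$. Using the identity $A_{F\oplus \mbf F_\ve,\,F'\oplus \mbf F_\ve}=A_{F,F'}+2I$ noted in Section \ref{M-stab}, one checks that near the generic point of each top-dimensional preimage component, $\overline Z_{A+2I,\ve}$ factors locally as $(\mathcal O_{i_\ve(0)}\cap U)\times i_\ve(\overline Z_{A,\ve})$, so the codimension-$c$ slice cuts it transversally along $\overline Z_{A,\ve}$ with multiplicity one. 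The expected-dimension / excess-intersection formalism of \cite[2.6]{CG97} then yields $\phi_{v+2n,v}([\overline Z_{A+2I,\ve}])=[\overline Z_{A,\ve}]$. The principal obstacle is exactly this last transversality check: one must upgrade Proposition \ref{phi-U} to a compatible product decomposition of the closure $\overline Z_{A+2I,\ve}$ itself, not merely of the ambient $\M$. The cleanest route is to argue $\G_{v,\ve}$-equivariantly and reduce to a single generic fiber, where Lemma \ref{pi-cartesian} gives $\pi^{-1}(x\oplus e_\ve)=i_\M(\pi^{-1}(x))$ and the intersection becomes manifestly transverse; in the very-even orthogonal case one invokes (\ref{i-even}) to treat the two irreducible components separately.
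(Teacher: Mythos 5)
Your proposal is correct, and it is essentially the same approach the paper takes, just spelled out in much greater detail. The paper's proof of Proposition~\ref{stab} is a single sentence: ``This follows from Lemma~\ref{i-Z} and the definition of $\phi_{v+2n,v}$.'' That is, it relies on Lemma~\ref{i-Z} for the support (hence the vanishing when $A\notin\Theta_v$) and leaves the multiplicity-one assertion implicit in the phrase ``the definition of $\phi_{v+2n,v}$.''

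What you do differently is make the multiplicity computation explicit. You correctly observe that the restriction-with-supports operation applied to $[\overline Z_{A+2I,\ve}]$ lands, \emph{a priori}, as a non-negative integer combination of the fundamental classes supported on the top-dimensional part of $i_z^{-1}(\overline Z_{A+2I,\ve})$, and that Lemma~\ref{i-Z} only pins down the support, not the coefficients. Your route to multiplicity one --- trivialize the embedding analytically via Proposition~\ref{phi-U}, use the identity $A_{F\oplus\mbf F_\ve,F'\oplus\mbf F_\ve}=A_{F,F'}+2I$ to match the orbit strata, and reduce to a generic fiber where Lemma~\ref{pi-cartesian} makes the slice manifestly transverse --- is a sound way to fill in what the paper compresses into ``the definition.'' You also correctly flag that Proposition~\ref{phi-U} as literally stated decomposes a neighborhood of $i_\M(\M_{n,v,\ve})$ in $\M_{n,v+2n,\ve}$, so one must extend this to the fiber product $\tilde U\times_U\tilde U\cong(\mathcal O_{i_\ve(0)}\cap U)\times i_\ve(Z_{n,v,\ve})$ and check the cycle $\overline Z_{A+2I,\ve}$ itself respects the splitting near the generic point; your sketch of a $\G_{v,\ve}$-equivariant reduction to a single Springer fiber is a reasonable way to do that. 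The one small imprecision is the reference to~(\ref{i-even}) in the very-even case: that convention indexes the two components of $\N_{n,v,\ve}$, whereas what you actually need there is to track the (possibly several) irreducible components inside $\overline Z_{A,\ve}$ whose classes sum to $[\overline Z_{A,\ve}]$ in the sense of~(\ref{H-top}); invoking the $\mbb Z/2\mbb Z$-equivariance as in~(\ref{Z-G-fix}) is the more directly relevant mechanism. None of this changes the verdict: your argument is correct and amounts to a careful expansion of the paper's terse proof rather than a genuinely different route.
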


\begin{proof}
This follows from Lemma~\ref{i-Z} and the definition of $\phi_{v+2n, v}$. 
\end{proof}

By applying the machinery built up in~\cite[2.7]{CG97}, the space $\H_*(Z_{n, v,\ve})$, equipped with the convolution product, is a unital  associative algebra.  
Consider the intersection cohomology complex $\IC_{\M_{n, v,\ve}}$ with coefficients in $\mbb Q$ and 
\begin{align}
\label{L}
L_{n, v,\ve} = \pi_* (\IC_{\M_{n,v,\ve}}).
\end{align}
Since $\pi$ is semismall, we see that $L_{n,v,\ve}$ is a semisimple perverse sheaf. 
As associative algebras,
$$\H_*(Z_{n,v,\ve})\cong \Ext^* (L_{n,v,\ve}, L_{n, v,\ve}).$$ 
By (\ref{cartesian}) and (\ref{dim-F}), we have immediately
a canonical isomorphism $
i^*_{\ve} [-a_{\ve}] (L_{n, v+2n,\ve}) \cong L_{n, v, \ve}$, where $a_{\ve}=\dim \mathcal O_{i_{\ve}(0)}$.
In particular, the functor $i^*_{\ve}[-a_{\ve}]$  defines an algebra homomorphism
\begin{align}
\label{i-homo-1}
i^{*}_{\ve}[-a_{\ve}]: \Ext^*(L_{n,v+2n,\ve}, L_{n, v+2n,\ve}) \to \Ext^* (L_{n,v,\ve}, L_{n, v,\ve}).
\end{align}
Further we have that the two homomorphisms (\ref{phi-1}) and (\ref{i-homo-1}) coincide. 
\begin{prop}
\label{phi-homo}
We have that  $\phi_{v+2n,v}=i^{*}_{\ve}[-a_{\ve}]$  and hence $\phi_{v+2n,v}$ is    an algebra homomorphism. 
\end{prop}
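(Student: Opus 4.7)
The plan is to upgrade the canonical isomorphism $i^*_\ve[-a_\ve]L_{n,v+2n,\ve}\cong L_{n,v,\ve}$ to an identification of two operations on Borel--Moore homology, then deduce the algebra homomorphism property for free.

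First, I would make the identification $i^*_\ve[-a_\ve]L_{n,v+2n,\ve}\cong L_{n,v,\ve}$ precise using proper base change. Since the diagram (\ref{cartesian}) is cartesian by Lemma~\ref{pi-cartesian}, since $\pi$ is proper (so $\pi_*=\pi_!$), and since $i_\M$ is a closed embedding of smooth varieties with normal bundle of rank $a_\ve/2$ (consistent with the dimension count in (\ref{dim-F})), smooth pullback gives $i_\M^*\IC_{\M_{n,v+2n,\ve}}\cong \IC_{\M_{n,v,\ve}}[a_\ve]$. Combined with base change $i_\ve^*\pi_*=\pi_* i_\M^*$, this yields the stated isomorphism of perverse sheaves. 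In particular, the functor $i_\ve^*[-a_\ve]$ induces a map on $\Ext$, which is automatically a unital algebra homomorphism because pullback is monoidal for Yoneda composition.

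Second, I would show that this induced map on $\Ext$ coincides with $\phi_{v+2n,v}$. The identification $\H_*(Z_{n,v,\ve})\cong \Ext^*(L_{n,v,\ve}, L_{n,v,\ve})$ comes from the chain of isomorphisms
\[
\H_*(Z_{n,v,\ve})\cong \H^{-*}(\M_{n,v,\ve}\times_{\N_{n,v,\ve}}\M_{n,v,\ve}, \omega)\cong \Ext^*(\pi_*\IC, \pi_*\IC),
\]
natural with respect to pullback along base change of the diagonal $\N_{n,v,\ve}\hookrightarrow \N_{n,v+2n,\ve}$. Applying this naturality to the square obtained by iterating (\ref{cartesian}), namely
\[
\begin{CD}
Z_{n,v,\ve} @>i_z>> Z_{n,v+2n,\ve} \\
@VVV @VVV \\
\M_{n,v,\ve}\times\M_{n,v,\ve} @>i_\M\times i_\M>> \M_{n,v+2n,\ve}\times\M_{n,v+2n,\ve},
\end{CD}
\]
identifies the restriction--with--supports arrow of (\ref{phi-1}) with the $\Ext$ pullback along $i_\ve^*[-a_\ve]$. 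The cohomological degree shift matches on both sides because the codimension of the left embedding equals $2\dim\mathcal O_{i_\ve(0)}=2a_\ve$, by (\ref{dim-F}).

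The main technical point, and the only place where care is needed, is the second step: one must verify that the geometric refined intersection pairing used to define restriction with supports in~\cite[2.6.21]{CG97} really is computed by the functorial pullback on $\Ext$ under the sheaf--theoretic isomorphism. This is standard once one unwinds the definitions (the key tool is that for a smooth closed pair with proper map $\pi$, the specialization of a cycle in the fiber product is modeled by sheaf pullback), so I would invoke the general principle of~\cite[Ch.~8]{CG97} rather than recompute from scratch. Once the identification $\phi_{v+2n,v}=i_\ve^*[-a_\ve]$ is in hand, the algebra homomorphism property is immediate from the first step.
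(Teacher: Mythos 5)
Your overall strategy matches the paper's: first observe that the cartesian square (\ref{cartesian}) gives $i^*_\ve[-a_\ve]L_{n,v+2n,\ve}\cong L_{n,v,\ve}$ and that $i^*_\ve[-a_\ve]$ is therefore tautologically an algebra homomorphism on $\Ext$, then reduce to showing that restriction with supports coincides with this sheaf-theoretic pullback. The paper does this by expanding (\ref{i-Z-1}) to a cartesian cube, invoking proper base change to establish (\ref{transfer-a})--(\ref{transfer-b}), and then matching $\phi_{v+2n,v}$ against $\H^{-*}(\pi^2_*\Delta_z^!(\mathrm{adj}))$ via the precise reference [CG97, 8.3.21]; your "invoke the general principle of [CG97, Ch.~8]" is pointing at the same step but without citing it or unwinding it.

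However, there is a genuine gap in the degree bookkeeping. The map $\phi_{v+2n,v}$ is defined in (\ref{phi-1}) by restriction with supports on the \emph{asymmetric} square (\ref{i-Z-1}), whose bottom arrow $\M_{n,v,\ve}\times\M_{n,v+2n,\ve}\hookrightarrow\M_{n,v+2n,\ve}\times\M_{n,v+2n,\ve}$ has complex codimension $a_\ve=\dim\mathcal O_{i_\ve(0)}$, producing the shift $2a_\ve$ that appears in (\ref{phi-1}). You instead use the symmetric square with bottom arrow $i_\M\times i_\M$, which has complex codimension $2a_\ve$ and would give a restriction-with-supports map of degree shift $4a_\ve$ — not the degree-preserving map on $\Ext^*$ that $i^*_\ve[-a_\ve]$ is. So the sentence "the cohomological degree shift matches on both sides because the codimension of the left embedding equals $2a_\ve$" is precisely where the mismatch is hiding: shift $= 2\times$codimension, so codimension $2a_\ve$ gives shift $4a_\ve$, not $2a_\ve$. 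This is not a cosmetic slip; your square does not compute $\phi_{v+2n,v}$ as defined. The fix is to work with (\ref{i-Z-1}) (or its expansion to the cube as in the paper). Separately, the claim that $i_\M$ has normal bundle of rank $a_\ve/2$ contradicts (\ref{dim-F}), which gives complex codimension $a_\ve$; you then write the correct shift $[a_\ve]$ on the $\IC$ sheaves anyway, so this is only an internal inconsistency, but it is worth correcting. Finally, note that the paper restricts to a fixed connected component of $\M_{n,v,\ve}$ at the outset of the proof, since (\ref{dim-F}) is only meaningful componentwise; your argument should do the same.
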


\begin{proof}

To proceed, we need to restrict to connected components of  $\M_{n,v,\ve}$, and for simplicity, and by an abuse of notation,  we identify $\M_{n, v,\ve}$ with one of its connected components. 
Expand the diagram (\ref{i-Z-1}):
\[
\xymatrix{
& \M_{n,v,\ve}\times \M_{n, v+2n,\ve} \ar@{->}[rr]^{j_z} \ar@{.>}[dd] &  & \M_{n, v+2n,\ve}\times \M_{n, v+2n,\ve} \ar@{->}[dd]\\
Z_{n, v,\ve}  \ar@{->}[rr] \ar@{->}[ur] \ar@{->}[dd] & & Z_{n, v+2n,\ve} \ar@{->}[ur]_{\Delta_z} \ar@{->}[dd]^(.25){\pi^2} \\
& \N_{n, v,\ve}\times \N_{n, v+2n, \ve} \ar@{.>}[rr] & & \N_{n, v+2n,\ve} \times \N_{n, v+2n, \ve} \\
\N_{n, v,\ve}\ar@{.>}[ur] \ar@{->}[rr]^{i_\ve} &  &\N_{n, v+2n, \ve}\ar@{->}[ur]
}
\]
where the maps are inclusion or projections. Observe that all diagrams are cartesian.
So by proper base change, there are
\begin{align}
\label{transfer-a}
\begin{split}
\pi^2_* \Delta_z^! \IC_{\M_{n, v+2n,\ve}\times \M_{n,v+2n,\ve} }
& \cong \mathcal H \mrm{om} (L_{n, v+2n,\ve}, L_{n, v+2n,\ve}),  \quad ([\mbox{CG97, 8.6.4}])\\
\pi^2_* \Delta_z^! j_{z*} j_z^* \IC_{\M_{n, v+2n,\ve}\times \M_{n,v+2n,\ve} }
& \cong i_{\ve*} i_{\ve}^! [2a] \pi^2_* \Delta^!_z  \IC_{\M_{n, v+2n,\ve}\times \M_{n,v+2n,\ve} }  \\
& \cong i_{\ve*} \mathcal{H}\mbox{om} (i^*_{\ve} [-a_{\ve}] (L_{n, v+2n,\ve}), i^*_{\ve} [-a_{\ve}] (L_{n, v+2n,\ve})) .
\end{split}
\end{align}
Note that 
\begin{align}
\label{transfer-b}
\begin{split}
\H_{j} (Z_{n, v+2n,\ve}) &\cong \H^{-j}( Z_{n, v+2n,\ve}, \mbb D_{Z_{n, v+2n,\ve}}) \\
&\cong \H^{-j}(\N_{n, v+2n, \ve}, \pi^2_* \Delta^!_z \mbb D_{\M_{n, v+2n,\ve}\times \M_{n,v+2n,\ve} })\\
&\cong \H^{-j}(\N_{n, v+2n, \ve}, \pi^2_* \Delta^!_z \IC_{\M_{n, v+2n,\ve}\times \M_{n,v+2n,\ve} } [2b])\\
&\cong  \H^{-j}(\N_{n, v+2n, \ve}, \mathcal H \mrm{om} (L_{n, v+2n,\ve}, L_{n, v+2n,\ve})[2b])\\
& \cong \mrm{Ext}^{-j+2b}(L_{n, v+2n,\ve}, L_{n, v+2n,\ve}),
\end{split}
\end{align}
where $b=\dim \M_{n, v+2n,\ve}$.
Let 
\[
\mrm{adj}: 1\to j_{z*} j_z^*
\]
be the adjunction of the adjoint pair $(j^*_z,j_{z*})$. Using (\ref{transfer-a}) and applying $\pi^2_* \Delta^!_z$, we get 
\[
\pi^2_*\Delta_z^! (\mrm{adj}):  \mathcal H \mrm{om} (L_{n, v+2n,\ve}, L_{n, v+2n,\ve})
\to i_{\ve*} \mathcal{H}\mbox{om} (i^*_{\ve} [-a_{\ve}] (L_{n, v+2n,\ve}), i^*_{\ve} [-a_{\ve}] (L_{n, v+2n,\ve})).
\]
For convenience, we write $\End^j(A)$ for $\mrm{Ext}^j(A, A)$. 
By~\cite[8.3.21]{CG97}, we know from (\ref{transfer-a})-(\ref{transfer-b}) that the restriction with supports $\phi_{v+2n,v}$  gets identified with  the map
\[
\H^{-*} ( \pi^2_*\Delta_z^! (\mrm{adj})): 
\mrm{End}^{-*+2b}(L_{n, v+2n,\ve})
\to 
\mrm{End}^{-*+2b}(i^*_{\ve} [-a_{\ve}] (L_{n, v+2n,\ve})),
\]
which is nothing but $i^*_{\ve}[-a_{\ve}]$. Proposition is thus proved. 
\end{proof}

\begin{rem}
(1). The  limit of the projective system $(\H_*(Z_{n,v,\ve}), \phi_{v+2n,v})$ can be thought of as the Borel-Moore homology of $Z_{n,\infty,\ve}$. 

(2). It is desirable to have a sheaf-free proof  that $\phi_{v+2n,v}$ is an algebra homomorphism.
\end{rem}

Let 
\begin{align}
\label{H-top}
\H(Z_{n,v,\ve})= \mrm{span}_{\mbb Q} \{ [\overline Z_{A,\ve}]| A\in \Theta_v\}.
\end{align}
Note that  if $(v,\ve)\neq (\mbox{even}, 1)$, then
$\H(Z_{n,v,\ve})$ is the top Borel-Moore homology of $Z_{n,v,\ve}$ spanned by the fundamental class of irreducible components of $Z_{n, v,\ve}$. 
If $(v,\ve)=(\mbox{even},1)$, it is a subspace of that of $Z_{n,v,\ve}$ because some fundamental class $[\overline Z_{A,\ve}]$ is a sum of fundamental classes of irreducible components therein.
Note that $\G_{v,\ve}$ acts naturally on $\H_{irr}(Z_{n, v, \ve})$. 
In particular, we have
\begin{align}
\label{Z-G-fix}
\H(Z_{n, v,\ve}) = \H_{irr} (Z_{n, v, \ve})^{\G_{v,\ve}}=\H_{irr}(Z_{n, v,\ve})^{\mbb Z/2\mbb Z}, \quad \mbox{if} \ (v,\ve) = (\mbox{even}, 1),
\end{align}
where we use $\G_{v,\ve}/\G_{v,\ve}^o\cong \mbb Z/2\mbb Z$.
By general machinery of top Borel-Moore homology, we know that $\H(Z_{n, v,\ve})$ is automatically a subalgebra of $\H_*(Z_{n,v,\ve})$ if $(v, \ve) \neq (\mbox{even}, 1)$.
If $(v, \ve) = (\mbox{even}, 1)$, then the $\G_{v,\ve}$-equivariance of $\overline Z_{A,\ve}$ 
implies that
$\H(Z_{n,v,\ve})$ is a subalgebra of $\H_*(Z_{n, v,\ve})$ as well. 
 
Proposition~\ref{stab} then implies that $\phi_{v+2n,v}$ descends to a surjective algebra homomorphism
\begin{align}
\label{phi-top}
\phi_{v+2n,v}: \H(Z_{n,v+2n,\ve}) \to \H(Z_{n, v,\ve}).
\end{align}

\subsection{Variants}
\label{variant}
To a large extent, the stabilization in the previous sections reflects the embedding
$\P_{\ve}(v)\to \P_{\ve}(v+2n),\ \mu\mapsto (n^2, \mu)$.  There is yet another   natural embedding
\[
\P_{\ve} \to \P_{\ve}(v+n), \mu \mapsto (n, \mu),\ \mbox{if} \ \ve=1, n \ \mbox{is odd, or}, \mbox{if}\ \ve =-1, n, \mbox{is even}.\\
\]
It leads to a stabilization similar to the previous case. In what follows, we briefly describe how to modify the previous approach to this setting.

If $\ve =1$ and $n$ is odd so that $n=2r+1$. We  fix a basis on $\mbb C^n$ so that the associated orthogonal form 
is defined by the matrix $J$. Let
\[
e_\ve =\begin{bmatrix}
e & Jf^T & 0\\
0 & 0 & - f\\
0 & 0 & -e
\end{bmatrix}
\]
where $e$ is defined in (\ref{e}) of size $r\times r$ and $f= [1,0,\cdots,0]$. 
Note that $e_{\ve}$ is almost a Jordan block, except that some of the $1$'s are now $-1$. 
Note that there is a unique $n$-step isotropic flag, say $\mbf F_{\ve}$, fixed by $e_{\ve}$. 
Similar results in Sections~\ref{Stabilization},~\ref{M-stab} and~\ref{Z-stab} remain valid in this setting. In particular, there is an algebra homomorphism
\begin{align}
\label{phi-n}
\phi_{v+n, v} : \H(Z_{n,v+n,\ve}) \to \H(Z_{n, v,\ve}), \quad\mbox{if}\ (n,\ve)=(\mbox{odd}, 1).  
\end{align}

Now we consider the case $\ve=-1$ and $n$ is even so that $n=2r$. 
In this case, we fix a basis on $\mbb C^n$ so that its symplectic form  is given by $M_{\ve}$  in (\ref{Me}).
Let 
\[
e_\ve=
\begin{bmatrix}
e & E_{r1} \\
0 & -e
\end{bmatrix}
\]
where $E_{r1}$ the matrix whose $(i, j)$ entry is $\delta_{i,r}\delta_{j, 1}$. 
In particular $e_{\ve}$ is almost a Jordan block except that some of the $1$'s are $-1$.
So there is a unique $n$-step isotropic flag fixed by $e_\ve$. 
Results similar to the previous sections still hold in this setting. 
In particular, there is an algebra homomorphism
\begin{align}
\label{phi-n-2}
\phi_{v+n, v} : \H(Z_{n,v+n,\ve}) \to \H(Z_{n, v,\ve}), \quad\mbox{if}\ (n,\ve)= (\mbox{even}, -1).
\end{align}

Note that in this setting, we also have
\[
\dim T^* \F_{\mbf d + (1,\cdots, 1),\ve} -\dim T^* \F_{\mbf d,\ve} = \dim \mathcal O_{i_{\ve}(0)} =
\frac{1}{2} ( (2v+n)(n-1) -\ve (n - \delta_{1,\bar n})).
\]

By an argument similar to the previous sections, we have the following stabilization result. 

\begin{prop}
\label{i-homo}
The transfer maps $\phi_{v+n,v}$ defined in (\ref{phi-n}) and (\ref{phi-n-2}) satisfy  
\[
\phi_{v+n, v} ( [ \overline Z_{A+ I,\ve}] ) = 
\begin{cases}
[\overline Z_{A,\ve}] & \mbox{if} \  A\in \Theta_{v}, \\
0 & \mbox{if}\ A\not\in \Theta_v.
\end{cases}
\] 
\end{prop}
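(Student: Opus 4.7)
The plan is to mimic the argument for Proposition~\ref{stab}, which reduced to the two geometric inputs Lemma~\ref{pi-cartesian} (that the Springer fiber is preserved by the embedding) and Lemma~\ref{i-Z} (that the top-dimensional irreducible components of $i_z^{-1}(\overline Z_{A,\ve})$ are controlled by $A-2I$). The goal is to establish the corresponding two statements with $2I$ replaced by $I$ in the single-Jordan-block variants, and then run the same restriction-with-supports computation.

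First I would record, in each of the two variant cases described in Section~\ref{variant}, the basic structural fact about $e_{\ve}$: although the matrix has some signs flipped, it is conjugate to a single Jordan block on the ambient $\mathbb C^n$, and as stated in the excerpt there is a \emph{unique} $n$-step isotropic flag $\mbf F_{\ve}$ stabilized by $e_{\ve}$. Consequently $F^{\min}(e_{\ve}) = \mbf F_{\ve} = F^{\max}(e_{\ve})$. With this in hand, the proof of Lemma~\ref{pi-cartesian} carries over verbatim: if $(x \oplus e_{\ve}, F^{\dagger}) \in \M_{n,v+n,\ve}$, then squeezing $F^{\dagger}$ between the minimal and maximal flags of $x \oplus e_{\ve}$ forces $F^{\dagger} = F \oplus \mbf F_{\ve}$ for some $F \in \pi^{-1}(x)$. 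Hence the analog of the cartesian diagram (\ref{cartesian}) and of (\ref{i-Z-1}) hold in the variant setting.

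Next I would upgrade this to the analog of Lemma~\ref{i-Z}: for any $A \in \Theta_{v+n}$, the irreducible components of $i_z^{-1}(\overline Z_{A,\ve})$ of top dimension $\dim Z_{n,v,\ve}$ are exactly the components of $\overline Z_{A-I,\ve}$ when $A - I \in \Theta_v$, and there are none otherwise. The argument is the same as before: every top-dimensional component of $Z_{n,v+n,\ve}$ arises as the closure of $\G^o_{v+n,\ve} \times_{\G^o_{v+n,\ve}(x)} \Lambda$ for an irreducible component $\Lambda \subseteq \pi^{-1}(x) \times \pi^{-1}(x)$, and by the variant of Lemma~\ref{pi-cartesian} the fiber $\pi^{-1}(x)$ is preserved by $i_{\M}$. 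The bookkeeping of the relative position matrix shifts by $I$ rather than $2I$ because here we are appending a single Jordan block of size $n$ on $\mathbb C^n$, and the new flag $\mbf F_{\ve}$ contributes one additional unit along the full diagonal of the matrix.

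Finally, with the variant of Lemma~\ref{i-Z} established, I would apply restriction with supports~\cite[2.6.21]{CG97} to the variant of the cartesian square (\ref{i-Z-1}), together with the dimension shift by $\dim \mathcal O_{i_{\ve}(0)}$ given in Section~\ref{variant}. As in Proposition~\ref{stab}, this produces the map $\phi_{v+n,v}$ on Borel-Moore homology and shows directly on fundamental classes that $\phi_{v+n,v}([\overline Z_{A+I,\ve}]) = [\overline Z_{A,\ve}]$ when $A \in \Theta_v$ and vanishes otherwise, since the top-dimensional components of $i_z^{-1}(\overline Z_{A+I,\ve})$ are controlled by $(A+I) - I = A$. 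The main obstacle is the uniqueness of the $e_{\ve}$-stable isotropic flag $\mbf F_{\ve}$ in the two sign-modified Jordan block cases, which is precisely what makes the variant of Lemma~\ref{pi-cartesian} go through and everything else falls into line.
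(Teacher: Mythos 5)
Your proposal matches the route the paper (implicitly) takes: Section~\ref{variant} explicitly declares that "Results similar to the previous sections still hold in this setting'' and leaves the details to the reader, and the ingredients you isolate are exactly the ones needed. The key observation that $e_{\ve}$, despite the sign changes, is a single Jordan block of size $n$ on $\mathbb C^n$ (rather than two blocks on $\mathbb C^{2n}$) is correct, and it does force $F^{\min}(e_\ve)=\mbf F_\ve=F^{\max}(e_\ve)$ to be the unique $e_\ve$-stable $n$-step flag. From there the arguments for the analogues of Lemma~\ref{pi-cartesian} and Lemma~\ref{i-Z} carry over with $2I$ replaced by $I$, and the restriction-with-supports computation with the dimension shift $\dim\mathcal O_{i_\ve(0)}$ from Section~\ref{variant} gives the claimed formula. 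This is essentially the same argument the paper intends; no gap.
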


\section{A Lagrangian construction of $\U(\mathfrak{sl}^{\theta}_n)$ and its rational modules}

With Propositions~\ref{stab} and~\ref{i-homo} in hand, we are ready to present a geometric realization of  $ \U(\mathfrak{sl}^{\theta}_n)$ and its Lusztig form
inside the limit  
$\varprojlim_{k, v} \H(Z_{n, v+2kn,\ve})$ of the projective system $(\H(Z_{n, v,\ve}), \phi_{v+2n, v})$ of algebras  from (\ref{H-top}) and (\ref{phi-top}), where
$v\in I_{n,\ve}$. 
This is done depending on the parity of $n$. 
It is similar to a quantum version established in~\cite{LW18}.

We also show that the Lusztig form admits a basis and that the modules arising from geometry are rational. 

\subsection{Odd case}

In this section, we assume that $n$ is odd.
We consider the following elements in the projective limit  
$\varprojlim_{k, v} \H(Z_{n, v+2kn,\ve})$:
$$
[\overline Z_{\overline A,\ve}]:= \varprojlim_{k} [\overline Z_{A+2kI,\ve}], 
\quad \overline A\in \overline  \Theta_{n, v  +\infty}.
$$
In light of Remark~\ref{Z-infty}, it can be regarded as the fundamental class of the $\overline Z_{\overline A,\ve}$
in the `top-Borel-Moore homology' of $Z_{n, \infty,\ve}$.
Recall $r=\lfloor n/2\rfloor$. For all $1\leq i\leq n-1$, we define 
\begin{align}
\label{generators}
\begin{split}
\begin{cases}
&\e_{i,\ve} = \sum_{\overline A} 
(-1)^{a_{ii}+\delta_{i,r+1} ( \delta_{-1,(-1)^v}  + 1 -\delta_{1,\ve})}
[\overline Z_{\overline A,\ve}],\\
& \f_{i,\ve}  = \sum_{\overline A}  (-1)^{a_{i+1, i+1} +\delta_{i,r} ( \delta_{-1,(-1)^v}  + 1 -\delta_{1,\ve}) }
[\overline Z_{\overline A,\ve}],\\
& \h_{i,\ve}  = \sum_{\overline A} (a_{ii} - a_{i+1, i+1} )
[\overline Z_{\overline A,\ve}],\\
& \mbf 1_{\overline \lambda,\ve}  = [\overline Z_{\overline A,\ve}],
\end{cases}
\end{split}
\end{align}
where the first (resp. second) sum runs over all $\overline A$ such that $A- E^{\theta}_{i,i+1}$ (resp. $A- E^{\theta}_{i+1,i}$), 
with $E^{\theta}_{i, i+1} = E_{i, i+1} + E_{n+1-i,n-i}$, is diagonal, the third sum runs over all diagonals, and the $\overline A$ in the last equality is diagonal such that $\ro(A) = \lambda$.
Note that the sums are infinite and are well-defined in $\varprojlim_{k, v} \H(Z_{n, v+2kn,\ve})$ thanks to Proposition~\ref{stab}.
By definition, one has 
\[
\e_{i,\ve} =\f_{\theta(i),\ve}\ \mbox{and}\ \h_{i,\ve} +\h_{\theta(i),\ve}=0.
\]
Let 
$\bU_{n,\ve}$ be the subalgebra of $\varprojlim_{k, v} \H(Z_{n, v+2kn,\ve})$ generated by
$\e_{i,\ve}$ and $\h_{i,\ve}$ for $1\leq i\leq n-1$. 
Let $\dot \bU_{n,\ve}$ be the $\bU_{n,\ve}$-bimodule generated by $\mbf 1_{\overline \lambda,\ve}$ for all $\overline \lambda\in \overline  \Lambda_{v+\infty}$ 
where $v\in I_{n,\ve}$, where $I_{n,\ve}$ is from (\ref{I}). 
Clearly,
\begin{align}
\begin{split}
\mbf 1_{\bar \lambda,\ve}\mbf  1_{\bar \mu,\ve} & =\delta_{\bar \lambda,\bar \mu} \mbf 1_{\bar \lambda, \ve}, \ \forall \bar \lambda, \bar \mu.\\
\e_{i,\ve} \mbf 1_{\bar\lambda,\ve} & =  \mbf 1_{\bar \lambda + \bar \delta_i - \bar \delta_{i+1} - \bar \delta_{\theta(i) } + \bar \delta_{\theta(i) + 1} , \ve} \e_{i,\ve},\  \forall i, \bar \lambda.
\end{split}
\end{align}
So $\dot \bU_{n,\ve}$ is naturally an associative algebra without unit.

\begin{thm}
\label{U-iso-odd}
Assume that $n$ is odd. There is a unique algebra isomorphism 
\[\U(\mathfrak{sl}^{\theta}_n) \to \bU_{n,\ve}, \
e_{i,\theta} \mapsto \e_{i,\ve}, h_{i,\theta}\mapsto \h_{i,\ve}, \quad \forall 1\leq i\leq n-1.
\]
Moreover,
there is  an algebra isomorphism 
$\dot \U(\mathfrak{sl}^{\theta}_n) \to \dot \bU_{n,\ve}$ defined by
\[
e_{i,\theta}^\ell 1_{\bar \lambda} \mapsto
\e^\ell_{i,\ve} \mbf 1_{\bar \lambda,\ve},
\quad \forall \ell=0,1.
\]
\end{thm}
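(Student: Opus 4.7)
The plan is to produce an algebra homomorphism $\pi\colon \U(\mathfrak{sl}_n^\theta) \to \bU_{n,\ve}$ by sending $e_{i,\theta}\mapsto \e_{i,\ve}$ and $h_{i,\theta}\mapsto \h_{i,\ve}$, verify each defining relation of (\ref{Def-odd}) on the images, and then deduce that $\pi$ is an isomorphism via a faithfulness argument against a sufficiently large family of rational modules realized geometrically. Uniqueness is automatic once the generators are specified, and the idempotent-form claim will then be upgraded from $\pi$ essentially by construction.

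The relations $\h_{i,\ve}+\h_{\theta(i),\ve}=0$ and $[\h_{i,\ve},\h_{j,\ve}]=0$ are immediate from (\ref{generators}), because both $\h$'s are written as sums over diagonal orbit closures and convolution of diagonal classes is pointwise in the weight index. The weight relation $[\h_{i,\ve},\e_{j,\ve}]=(c_{ij}-c_{\theta(i),j})\e_{j,\ve}$ follows by convolving the diagonal $\h_{i,\ve}$ against a basis element $[\overline Z_{\overline A,\ve}]$ with $A-E^\theta_{j,j+1}$ diagonal: the two summands $c_{ij}$ and $-c_{\theta(i),j}$ record precisely the two $\theta$-paired strands that are perturbed. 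The bracket $[\e_{i,\ve},\e_{j,\ve}]=\delta_{i,\theta(j)}\h_{i,\ve}$ when $c_{ij}=0$, and the homogeneous Serre relation when $c_{ij}=-1$ and $i\ne\theta(j)$, are verified by the transverse-intersection calculations on $Z_{n,v+2kn,\ve}$ that $\theta$-symmetrize Ginzburg's $\mathfrak{sl}_n$ computation in \cite[Chap.~4]{CG97}; the Kronecker delta is exactly what remains when $(i,i+1)$ and $(j,j+1)$ are swapped by $\theta$, producing an unavoidable diagonal contribution. All of these reduce to finite checks at each level $v+2kn$, and Proposition~\ref{stab} guarantees compatibility across the projective system so that they hold in the limit $\bU_{n,\ve}$.

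The main obstacle, as flagged in the introduction, is the rank-one nonhomogeneous Serre relation for $(i,j)=(r,r+1)$ where $i=\theta(j)$: the right-hand side $-4\e_{i,\ve}$ cannot be extracted by a direct rank-one transverse-intersection argument, because the relevant strata carry no strata below them of the required dimension. Following the bootstrap of \cite{FLW}, my plan is not to attack it head-on. Instead I would (i) use the relations already verified in higher rank to build the natural convolution action of $\bU_{n,\ve}$ on the projective system $\varprojlim_k \H_*(\F_{n,v+2kn,\ve})$ arising from convolution of $Z_{n,v+2kn,\ve}$ with $\F_{n,v+2kn,\ve}$; (ii) identify its weight components with rational $\U(\mathfrak{sl}_n^\theta)$-modules in the sense of Section~\ref{Rep}, using the Borel-subalgebra presentation of \cite{LZ19} to check weights and highest vectors; and (iii) produce enough such modules so that their joint annihilator in $\U(\mathfrak{sl}_n^\theta)$ is trivial. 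Because the nonhomogeneous Serre element is a consequence of the presentation and hence acts as zero on every rational module, its image in $\bU_{n,\ve}$ vanishes on a separating family and must itself be zero. The same argument, applied to an arbitrary element, simultaneously gives injectivity of $\pi$; surjectivity is built into the definition of $\bU_{n,\ve}$.

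For the idempotent form, the convolution identities $\mbf 1_{\bar\lambda,\ve}\mbf 1_{\bar\mu,\ve}=\delta_{\bar\lambda,\bar\mu}\mbf 1_{\bar\lambda,\ve}$ (orthogonality of distinct diagonal classes) and $\e_{i,\ve}\mbf 1_{\bar\lambda,\ve}=\mbf 1_{\bar\lambda+\bar\delta_i-\bar\delta_{i+1}-\bar\delta_{\theta(i)}+\bar\delta_{\theta(i)+1},\ve}\e_{i,\ve}$ (the off-diagonal creation shifts the row-dimension vector in the prescribed $\theta$-symmetric way) match the defining relations of $\dot\U(\mathfrak{sl}_n^\theta)$ verbatim, and the scalar action of $\h_{i,\ve}$ on $\mbf 1_{\bar\lambda,\ve}$ matches as well. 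Consequently, $1_{\bar\lambda}\mapsto \mbf 1_{\bar\lambda,\ve}$ together with $\pi$ extends to a well-defined algebra homomorphism $\dot\U(\mathfrak{sl}_n^\theta)\to\dot\bU_{n,\ve}$ sending $e_{i,\theta}^\ell 1_{\bar\lambda}\mapsto \e_{i,\ve}^\ell\mbf 1_{\bar\lambda,\ve}$; surjectivity is by construction and injectivity reduces to that of $\pi$ applied weight space by weight space, which has already been established.
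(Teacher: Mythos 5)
Your plan shares the paper's key idea of handling the nonhomogeneous rank-one Serre relation indirectly, via a family of geometrically realized modules on which the relation must hold, rather than by a head-on transverse-intersection computation. But the way you propose to run that idea differs materially from what the paper does, and the difference hides a genuine gap.

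The paper does \emph{not} attempt a faithfulness argument for general $n$. It isolates the $n=3$ case (Section~\ref{n=3}), proves there that $\dot\U(\mathfrak{sl}_3^\theta)$ acts faithfully on $\oplus_v W_{v,\ve}$ (Proposition~\ref{Faith}) by an explicit polynomial computation in the PBW basis $m_{a,b,c;\bar\lambda}$, and from this deduces the isomorphism $\dot\U(\mathfrak{sl}_3^\theta)\cong\dot\bU_{3,\ve}$ (Proposition~\ref{Rank-one-iso}) and hence the nonhomogeneous Serre relation for $\e_\ve,\f_\ve,\h_\ve$ (Corollary~(\ref{rank-one-Serre})). For general odd $n$, Proposition~\ref{Schur-def} asserts that the offending Serre relations at $(i,j)=(r,r+1),(r+1,r)$ \emph{reduce to the $n=3$ case}; the remaining relations are already known. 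This gives a surjective homomorphism $\U(\mathfrak{sl}_n^\theta)\to\bU_{n,\ve}$. Injectivity is then obtained not from any faithfulness of a representation but from the homological basis theorem (Proposition~\ref{H-estimate}): $\dot\bU_{n,\ve}$ has the basis $\{[\overline Z_{\overline A,\ve}]\}$, matched against the monomial basis of $\dot\U(\mathfrak{sl}_n^\theta)$, forcing the surjection to be bijective.

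Your proposal runs the faithfulness argument directly for all $n$ on the projective system $\varprojlim_k\H_*(\F_{n,v+2kn,\ve})$ and claims it simultaneously yields the Serre relation and injectivity. That is where the gap is. The modules $W_{n,v,\ve}$ attached to this projective system form, for each $n$, a single one-parameter family of irreducibles (Proposition~\ref{n-W} identifies $W_{n,v,\ve}\cong L'((d,0,\dots,0),(2d,0,\dots,0))$). For $n=3$ this family happens to separate $\dot\U(\mathfrak{sl}_3^\theta)$, but the verification is a delicate polynomial argument exploiting the low rank, and for $n>3$ a one-parameter family cannot be expected to have trivial joint annihilator in $\U(\mathfrak{sl}_n^\theta)$. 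Invoking \emph{all} rational modules instead (as your step (iii) suggests) would require first establishing that the Springer-type $W_{\mu,\psi}$ are $\bU_{n,\ve}$-modules compatible with the expected $\U(\mathfrak{sl}_n^\theta)$-structure and that they separate -- but the rationality and exhaustion results (Theorem~\ref{Springer}) come \emph{after} the isomorphism in the paper, and reusing them here risks circularity. You would need to supply: (i) the reduction of the general-rank nonhomogeneous Serre relation to the $n=3$ (or a comparable rank-one) computation, which the paper performs; (ii) the explicit polynomial faithfulness proof for $n=3$; and (iii) the basis argument of Proposition~\ref{H-estimate} to conclude injectivity for general $n$. Without (i) and (iii) the proposal does not close.

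The treatment of the idempotent form at the end of your proposal is fine, and the observation that surjectivity is automatic from the definition of $\bU_{n,\ve}$ matches the paper. But as written the proposal substitutes an unsubstantiated faithfulness claim for the paper's two concrete pillars (the $n=3$ reduction and the homological basis), and that substitution is not justified.
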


The proof will be  given in Section~\ref{Schur}.

\begin{rem}
(1). When $\ve=1$, the notation $\H_{n,\ve}$   depends on the parity of $v$. But they are isomorphic via the transfer map $\phi_{v+n,v}$ in (\ref{phi-n}).

(2). The generators $\e_i$ and  $\f_i$ here correspond to $\f_i$ and $\e_i$ in~\cite{BKLW} respectively.
\end{rem}

\subsection{Even case}

In this section, we assume that $n$ is even. We define the algebras $\bU_{n,\ve}$ and $\dot \bU_{n,\ve}$ in exactly the same manner as in the odd case, except for the generator $\e_{r,\ve}=\f_{r,\ve}$ for $r=n/2$: precisely they are given by
\begin{align}
\begin{split}
\e_{i,\ve} &= \sum_{\overline A} 
(-1)^{a_{ii}- \delta_{i, r} \delta_{1, \ve}}
\left ([\overline Z_{\overline A,\ve}]+ \delta_{i, r} (-1)^{\delta_{1,\ve}} \delta_{-1, (-1)^{a_{ii}}}[Z_{\overline A_r,\ve}] \right ),\ \forall 1\leq i\leq n-1,
%
\end{split}
\end{align}
where the sum runs over all $\overline A$ such that $A- E_{i,i+1}^{\theta}$ is diagonal and $A_r= A-E_{r,r+1}^{\theta} +E_{r,r}^{\theta}$. Note that $\h_{r,\ve}=0$.

\begin{thm}
\label{U-iso-even}
The statements in Theorem~\ref{U-iso-odd} remain true for  $n$ even. 
\end{thm}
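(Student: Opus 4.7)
The plan is to mirror the proof of Theorem~\ref{U-iso-odd} step for step, pinpointing the single place where the $n$ even case requires genuinely new input, namely the nonhomogeneous rank-one Serre relation at the middle node $r=n/2$.

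First I would verify that the assignment $e_{i,\theta}\mapsto\e_{i,\ve}$, $h_{i,\theta}\mapsto\h_{i,\ve}$ extends to a well-defined algebra homomorphism $\U(\mathfrak{sl}_n^\theta)\to\bU_{n,\ve}$ by checking the defining relations in (\ref{Def-even}). The symmetry $h_{i,\theta}+h_{\theta(i),\theta}=0$ (trivially true at the middle because $\h_{r,\ve}=0$), the commutativity of the $\h_{i,\ve}$, and the weight relation $[\h_{i,\ve},\e_{j,\ve}]=(c_{ij}-c_{\theta(i),j})\e_{j,\ve}$ are immediate from the formulas and the weight-space decomposition of $\dot\bU_{n,\ve}$ by the $\mbf 1_{\overline\lambda,\ve}$. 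The commutation $[\e_{i,\ve},\e_{j,\ve}]=\delta_{i,\theta(j)}\h_{i,\ve}$ for $c_{ij}=0$ is a direct convolution calculation on the orbits $Y_{\overline A,\ve}$; since $c_{ij}=0$ forces $|i-j|\ge 2$, the middle-node correction in $\e_{r,\ve}$ does not interact with the computation, which reduces to its odd-case analogue. The homogeneous Serre relation at pairs $(i,j)$ with $c_{ij}=-1$ and $i\ne\theta(i)$ is likewise parallel to the odd case.

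The only essentially new case is the nonhomogeneous Serre relation at the middle node: $c_{ij}=-1$ and $i=\theta(i)=r$, so $j\in\{r-1,r+1\}$. Here both terms in the corrected $\e_{r,\ve}$ contribute and one must show
\[
\e_{r,\ve}^2\e_{j,\ve}-2\e_{r,\ve}\e_{j,\ve}\e_{r,\ve}+\e_{j,\ve}\e_{r,\ve}^2=\e_{j,\ve}.
\]
I propose to handle this by the embedding strategy advertised in the introduction. Geometrically, inserting an isotropic line in the middle of an $n$-step isotropic flag produces an $(n{+}1)$-step isotropic flag; at the stabilized level this yields an algebra embedding $\bU_{n,\ve}\hookrightarrow\bU_{n+1,\ve'}$ for the appropriate partner sign $\ve'$, compatible with the algebraic inclusion $\mathfrak{sl}_n^\theta\hookrightarrow\mathfrak{sl}_{n+1}^\theta$. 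Under this transfer I expect $\e_{r,\ve}$ to be realized as (up to scalars) a commutator $[\e_{r,\ve'}^{(n+1)},\e_{r+1,\ve'}^{(n+1)}]$ of the two odd middle generators, and the correction term $(-1)^{\delta_{1,\ve}}\delta_{-1,(-1)^{a_{rr}}}[Z_{\overline A_r,\ve}]$ is precisely the contribution needed to make this identification exact; a naive generator would land in the wrong linear combination. Once the identification is in place, the desired nonhomogeneous Serre identity for $\e_{r,\ve}$ becomes an identity inside $\bU_{n+1,\ve'}$ among odd-case generators, which is already established by Theorem~\ref{U-iso-odd}.

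Finally, surjectivity of $\U(\mathfrak{sl}_n^\theta)\to\bU_{n,\ve}$ is automatic since all Chevalley generators are hit, and injectivity together with the idempotent-form statement is obtained by a weight-by-weight basis comparison: Proposition~\ref{stab} shows that the stabilized $\{\mbf 1_{\overline\lambda,\ve}\e_{i,\ve}^\ell\mbf 1_{\overline\mu,\ve}\}$ span $\dot\bU_{n,\ve}$, and one matches this with the known monomial basis of $\dot\U(\mathfrak{sl}_n^\theta)$. The main obstacle is the embedding step: constructing the geometric $n\hookrightarrow n{+}1$ insertion compatibly with the transfer maps and pinning down the precise sign combinatorics so that the corrected $\e_{r,\ve}$ lifts to the predicted commutator. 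Every other step is, once the middle node is disposed of, a close parallel of the odd-case argument and needs no conceptual modification.
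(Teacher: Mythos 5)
Your overall strategy is the same as the paper's: reduce the $n$ even case to the $n+1$ odd case via the geometric insertion of an extra middle term, which corresponds to the inclusion $Z_{n,v,\ve}\subseteq Z_{n+1,v,\ve}$ and to the algebra inclusion $\mathfrak{sl}_n^\theta\hookrightarrow\mathfrak{sl}_{n+1}^\theta$, and then deduce the even nonhomogeneous Serre relation from the odd one. (One small slip: under this insertion the sign $\ve$ does not change, so there is no ``partner sign $\ve'$''.)

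The genuine gap is the key identification of the middle generator. You guess that $\e_{r,\ve}$ lifts to the commutator $[\e'_{r,\ve},\e'_{r+1,\ve}]$ of the two odd middle generators, but this is not what the paper proves and it is not correct. The paper (in the proof of Proposition~\ref{Schur-def}, even case) shows that $\phi_v(\e_{r,\ve})$ is the restriction of $\e'_{r,\ve}\e'_{r+1,\ve}-\h'_{r,\ve}$ from $\H(Z_{n+1,v,\ve})$, i.e.\ $\e'_r\f'_r - \h'_r$, \emph{not} $\e'_r\f'_r-\f'_r\e'_r$. Already in the baby case $n=2$, $n+1=3$, the two disagree: on the restriction to the top isotropic Grassmannian $\Gr^v_{d,\ve}$ one has $\f_\ve\e_\ve=0$, while $\h_\ve$ acts by the scalar $d$, so the commutator gives $2d$ and $\t_{\ve,d}=\e_\ve\f_\ve-\h_\ve$ gives $d$; see~(\ref{t-act}). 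Pinning down the correct formula is exactly the content of the $n=2$ analysis in the paper: one needs the minimal-polynomial relation (Lemma~\ref{defining-t}), the fact that $\t_{\ve,d}$ behaves simply under the transfer map (Proposition~\ref{transfer-t}), and the explicit fundamental-class expression of $\t_{\ve,d}$ (Proposition~\ref{t-explicit}), which is then matched against the defining formula for $\e_{r,\ve}$ in the even case. Your proposal assumes away this entire chain. Once the correct identification is in hand, the deduction of (\ref{Def-even}) from (\ref{Def-odd}) by restriction, together with the basis comparison via Proposition~\ref{H-estimate} for injectivity, proceeds as you outline.
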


The proof will be given in Section~\ref{Schur}.

\subsection{Homological bases}

In this section, we show that $\dot \bU_{n,\ve}$ admits a basis $[\overline Z_{\overline A,\ve}]$ for $\overline A\in \overline \Theta_{v+\infty}$ for various $v$. 
This result provides a key step to the proof of  Theorems~\ref{U-iso-odd} and~\ref{U-iso-even}.

Recall $\Theta_{v+\infty}$ from (\ref{Theta}). We define a partial order $\preceq$ on $\Theta_{v+\infty}$ by 
\[
A\preceq B \ \mbox{if and only if} \ \sum_{r\leq i; s\geq j} a_{rs} \leq \sum_{r\leq i; s\geq j} b_{rs}, \quad \forall i< j. 
\]
Note that the definition does not involve any diagonal entries, hence this partial order is also well-defined on
$\overline \Theta_{v+\infty}$.
We further define a refinement $\sqsubseteq$ by
\[
A\sqsubseteq B \ \mbox{if and only if} \ A\preceq B, \ro(A)=\ro (B), \co(A) =\co(B).
\]

Recall $E^{\theta}_{h,h+1} = E_{h,h+1} + E_{n+1-h, n-h}$. 
For $h, j_0\in [1,n-1]$, $c\in \mbb N$ and $A\in \Theta_{v+\infty}$,
we set
\[
A_{h, c;j_0} = A - c (E^{\theta}_{h, j_0} - E^{\theta}_{h+1, j_0}).
\]

\begin{lem}
\label{H-leading}
Let $C, A\in  \Theta_{v+\infty}$. Assume that
$\co (C) = \ro (A)$ and $C - c E^{\theta}_{h, h+1}$ is diagonal for some $c\in \mbb N$, $h\in [1,n-1]$. 
Assume further there is an integer $j_0$
such that 
\[
\begin{cases}
j_0\in [1, n] &\mbox{if}\  h\neq r, r+1,\\  
j_0\in [1, r) & \mbox{if}\  h= r,\\
 j_0\in (r, n] & \mbox{if}\ h=r+1, 
\end{cases}
\ \mbox{and} \
\begin{cases}
a_{h, j} = 0 & \forall 1\leq j \leq j_0-1,\\
a_{h, j_0} = c & \\
a_{h+1, j} =0 & \forall 1\leq j \leq j_0.
\end{cases}
\]
Then we have
\begin{align}
[\overline Z_{\overline C,\ve}] * [\overline Z_{\overline A,\ve}] = [\overline Z_{\overline A_{h,c; j_0},\ve} ]  + \mbox{lower terms},
\end{align}
where ``lower terms'' stands for a finite sum of fundamental class $[\overline Z_{\overline B, \ve}]$ such that $\overline B \sqsubseteq \overline A_{h,c; j_0}$. 
\end{lem}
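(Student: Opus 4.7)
\textbf{Proof plan for Lemma \ref{H-leading}.} The strategy is a direct geometric computation of the convolution product, following the pattern of \cite[Lemma 3.6]{BKLW} and \cite[4.4]{CG97}, and carried out at a sufficiently large finite stage $v$ of the projective system before passing to the limit. Write the convolution as the pushforward along $p_{13}$ of the intersection of $p_{12}^{-1}\overline Z_{C,\ve}$ and $p_{23}^{-1}\overline Z_{A,\ve}$ inside the triple flag variety, where $p_{ij}$ are the obvious projections from triples $(x,F^1,F^2,F^3)$ compatible with $x$. Since $C-cE^\theta_{h,h+1}$ is diagonal, the relative position $Y_{\overline C,\ve}$ records pairs of isotropic flags that agree in every step except the $h$-th and (dually) the $(n-h)$-th, where $F^2_h \subset F^1_h$ is of codimension $c$. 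Thus the convolution is controlled entirely by how one can choose $F^2_h$ inside $F^1_h$ compatibly with the already fixed relative position $(F^2,F^3)\in Y_{\overline A,\ve}$.

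For the support analysis, I would show that, given $(F^1,F^3)$ with $A_{F^1,F^3}=B$, the existence of an intermediate $F^2$ of the prescribed types forces $B$ to be obtained from $A$ by moving a total of $c$ units from row $h+1$ to row $h$ while preserving $\ro(A)$ and $\co(A)$; the units moved in column $j$ cannot exceed $a_{h+1,j}$. Any such $B$ is dominated by the matrix obtained by moving all $c$ units from the \emph{leftmost} available column, which under the hypotheses $a_{h,j}=0$ for $j<j_0$, $a_{h,j_0}=c$, $a_{h+1,j}=0$ for $j\leq j_0$ is exactly $A_{h,c;j_0}$. This gives $\overline B\sqsubseteq\overline{A_{h,c;j_0}}$, placing all other contributions in the ``lower term.'' The restriction on $j_0$ relative to $r$ (staying strictly on one side of the middle) is precisely what prevents the modification at row $h$ from colliding with the $\theta$-symmetric modification at row $n-h$, so the dimension bookkeeping is clean.

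For the leading coefficient, I would check that the projection $p_{13}$ is birational onto $\overline Z_{\overline{A_{h,c;j_0}},\ve}$. Given a generic triple with $A_{F^1,F^3}=A_{h,c;j_0}$, the subspace $F^2_h$ is forced to be the unique codimension-$c$ subspace of $F^1_h$ that contains $F^1_{h-1}$ and maps into $F^3_{j_0-1}$; the hypothesis $a_{h,j}=0$ for $j<j_0$ and $a_{h+1,j}=0$ for $j\leq j_0$ guarantee existence and uniqueness of such an $F^2_h$ on the generic locus. The isotropy condition $F^2_{n-h}=(F^2_h)^\perp$ then determines the rest of $F^2$, and compatibility with $x$ is automatic from $(x,F^1),(x,F^3)\in\M_{n,v,\ve}$ and $F^1_{h-1}\subseteq F^2_h\subseteq F^1_h$. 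A dimension count verifies that this fiber is zero-dimensional of degree one, producing the coefficient $1$ in front of $[\overline Z_{\overline{A_{h,c;j_0}},\ve}]$.

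The main obstacle is the birationality / multiplicity-one step, particularly the verification that no hidden multiplicity arises from the $\theta$-symmetry constraint linking step $h$ with step $n-h$; here one must use the placement of $j_0$ as in the three cases of the hypothesis to argue that the two symmetric modifications are geometrically independent. A secondary technical point is that when $(v,\ve)=(\mathrm{even},1)$ the class $[\overline Z_{\overline A,\ve}]$ need not be a single irreducible component but a sum, so the intersection-theoretic identity must be promoted to the $\mbb Z/2\mbb Z$-invariant subspace $\H(Z_{n,v,\ve})$ of (\ref{Z-G-fix}); this is harmless because both sides of the claimed equality are $\G_{v,\ve}$-invariant by construction.
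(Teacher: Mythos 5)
The paper's proof of this lemma is a one-line citation: it invokes the combinatorial multiplication formula of \cite[Lem.~3.9]{BKLW} (proved there by a point-count over finite fields), the convolution theorem \cite[Thm.~2.7.26]{CG97} which translates such combinatorics into Borel--Moore homology via Euler characteristics of fibers, and Proposition~\ref{stab} to handle the passage to the projective limit. Your proposal essentially unpacks those same three inputs: you work at a sufficiently large finite stage and invoke Proposition~\ref{stab} to pass to the limit, you follow the BKLW-style geometric description of the relative position $Y_{\overline C,\ve}$ to constrain the support, and you invoke the CG97 convolution machinery (fiber over a generic point is a single reduced point) to pin down the leading coefficient. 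So the route is the same; yours is a more self-contained re-derivation of the geometric content behind \cite[Lem.~3.9]{BKLW}, while the paper chooses to cite.

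Two imprecisions worth flagging. First, in your support analysis you say $B$ is obtained from $A$ by moving $c$ units between rows $h+1$ and $h$ ``while preserving $\ro(A)$ and $\co(A)$''; a move between rows changes row sums, so only $\co$ is preserved, and $\ro(B)$ differs from $\ro(A)$ by $\pm c$ in positions $h,h+1$ (and symmetrically). The relation $\sqsubseteq$ in the conclusion holds because all contributing $B$ share the \emph{same} $\ro$ and $\co$ as $A_{h,c;j_0}$, not as $A$; this is the condition that should appear in the argument. You should also double-check the direction of the move against the paper's explicit formula $A_{h,c;j_0}=A-c(E^\theta_{h,j_0}-E^\theta_{h+1,j_0})$ and the hypotheses $a_{h,j_0}=c$, $a_{h+1,j}=0$ for $j\le j_0$, since as written these appear to point in opposite directions; in any case your argument needs to be checked for consistency with the sign and indexing conventions of the paper (there is a transpose/swap relative to \cite{BKLW} noted explicitly in the paper's remark after Theorem~\ref{U-iso-odd}). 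Second, the multiplicity-one step is only sketched: you claim $p_{13}$ is birational onto the closure of the leading orbit. To make this rigorous you should reduce to a fibered-product statement and invoke \cite[Thm.~2.7.26]{CG97} (the same theorem the paper cites), which converts the statement ``the generic fiber is a single reduced point'' into the assertion that the coefficient of $[\overline Z_{\overline{A_{h,c;j_0}},\ve}]$ is $\pm\chi(\mathrm{pt})=\pm 1$, with the sign governed by the dimension of the fiber; for a point fiber there is no sign. Without anchoring the coefficient computation there, the birationality claim is the weakest point of the argument, as you yourself acknowledge. Your observation about the $(v,\ve)=(\mathrm{even},1)$ case and descending to the $\mbb Z/2\mbb Z$-invariant subspace of (\ref{Z-G-fix}) is a good catch and is indeed the reason the statement is phrased in terms of $[\overline Z_{\overline A,\ve}]$ rather than individual irreducible components.
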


\begin{proof}
This is due to~\cite[Lem. 3.9]{BKLW}, ~\cite[Thm. 2.7.26]{CG97} and Proposition~\ref{stab}.
\end{proof}

We define a total order on the set $\{(i, j) | 1\leq j < i \leq n\}$ by setting $(i, j)\triangleleft (k,l)$ if and only if $i-j >k-l$ or $i-j=k-l$ and $i<k$. 
For all $A\in \Theta_{v+\infty}$ with $a_{ii}>>0$, 
we define inductively with respect to the order $\triangleleft$ the  $\frac{n(n-1)}{2}$ matrices $G_{ij} \equiv G^A_{ij} \in \Theta_{v+\infty}$ for $i> j$ as follows.
$G_{n, 1}$ is defined by the condition $G_{n, 1} - a_{n, n-1} E^{\theta}_{n, n-1}$ is diagonal
and $\ro (G_{n, 1}) = \ro (A)$.  Put $G_{i-1, j-1} = G_{n, n-i}$ if $j=1$. 
Assume that $G_{i-1, j-1}$ is defined, we define $G_{ij}$ by the conditions that $\ro (G_{i, j}) = \co (G_{i-1, j-1})$ and
$G_{ij} - \sum_{k\geq i} a_{kj} E^{\theta}_{i, i+1}$ is diagonal.
In particular,  for all $2\leq i\leq n$, we have
\[
\ro (G_{n, 1}) = \ro (A), \co (G_{n, n-i}) =\ro (G_{i, 1}), \co (G_{ij}) = \ro (G_{i+1, j+1}) , \co (G_{n,n-1}) = \co (A).
\]
We set
\begin{align}
\label{H-m_A}
\begin{split}
m_{\overline A,\ve} 
= [\overline Z_{\overline G_{n, 1},\ve}]  *
([\overline Z_{\overline G_{n-1, 1},\ve}] *  [\overline Z_{\overline G_{n, 2},\ve}])  \cdots  
([\overline Z_{\overline G_{i, 1},\ve}]  * [\overline Z_{\overline G_{i+1, 2},\ve}]*   \cdots\\
\cdots *  [\overline Z_{\overline G_{n, n-i+1},\ve}]) 
  \cdots 
([\overline Z_{\overline G_{2,1},\ve}] \cdots [\overline Z_{\overline G_{n, n-1},\ve}]).
\end{split}
\end{align}
Note that the product is taken with respect to the order $\triangleleft$.
We have two bases for $\dot \bU_{n,\ve}$  from the above construction.

\begin{prop}
\label{H-estimate}
For all $A\in \Theta_{v+\infty}$, we have $m_{\overline A, \ve} = [\overline Z_{\overline A,\ve}] + \mbox{lower terms}$.
Moreover, 
the elements $[\overline Z_{\overline A,\ve} ]$ (resp. $m_{\overline A,\ve}$)  for all $\overline A\in\overline  \Theta_{v+\infty}$ and $v\in I_{n,\ve}$ 
form a basis in $\dot \bU_{n,\ve}$. 
\end{prop}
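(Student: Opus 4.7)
The strategy is to establish the triangular formula
\[
m_{\overline A,\ve} = [\overline Z_{\overline A,\ve}] + \mbox{lower term}
\]
first, and then derive both basis statements by a standard unitriangular change-of-basis argument combined with linear independence of the fundamental classes in the projective limit.

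For the triangular formula, I proceed by iterated application of Lemma~\ref{H-leading} to the product defining $m_{\overline A,\ve}$. By construction each factor $[\overline Z_{\overline G_{ij},\ve}]$ has underlying matrix $G_{ij}$ equal to a diagonal matrix plus $c_{ij}\, E^{\theta}_{i,i+1}$ with $c_{ij}=\sum_{k\geq i} a_{kj}$ (with the initial convention that $G_{n,1}-a_{n,n-1}E^{\theta}_{n,n-1}$ is diagonal), and the matching $\ro(G_{ij})=\co(G_{i-1,j-1})$ guarantees that consecutive factors are composable. The order $\triangleleft$ is engineered precisely so that at the step indexed by $(i,j)$ the leading matrix of the running partial product meets the vanishing hypothesis of Lemma~\ref{H-leading}: rows $i$ and $i+1$ have zero entries in the relevant columns because all positions $(i,j')$ with $j'<j$ and $(i+1,j')$ with $j'\leq j$ correspond to strictly later $\triangleleft$-steps and have not yet been introduced. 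Each application of the lemma places one target entry of $A$ at its intended position with only $\sqsubseteq$-lower corrections. Aggregating over all $(i,j)$ with $i>j$ yields $A$ as the full leading matrix; the $\theta$-symmetry built into $E^{\theta}_{i,i+1}$ automatically generates the counterpart entries $a_{n+1-i,n+1-j}$, so the resulting matrix lies in $\Theta_{v+\infty}$.

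Granted the triangular formula, the basis statements follow. Linear independence of $\{[\overline Z_{\overline A,\ve}]\}$ in the projective limit is immediate because at each finite level $[\overline Z_{A+2kI,\ve}]$ is either the fundamental class of an irreducible component of $Z_{n,v+2kn,\ve}$, or, in the case $(v,\ve)=(\mbox{even},1)$, a canonical sum of two such, and distinct $\overline A$ have disjoint supports. For the inclusion $\mrm{span}\{[\overline Z_{\overline A,\ve}]\}\subseteq \dot\bU_{n,\ve}$, each $m_{\overline A,\ve}$ is manifestly in $\dot\bU_{n,\ve}$ as a product of the generating classes $\e_{i,\ve}\mbf 1_{\bar\lambda,\ve}$ (which isolate individual signed $[\overline Z_{\overline G,\ve}]$ with a single off-diagonal from the signed sums defining $\e_{i,\ve}$), and the triangular formula then recovers each $[\overline Z_{\overline A,\ve}]$ inductively on $\sqsubseteq$. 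Conversely, $\dot\bU_{n,\ve}\subseteq \mrm{span}\{[\overline Z_{\overline A,\ve}]\}$ since all generators lie in the span and the span is closed under convolution by the general top Borel-Moore homology reasoning recalled immediately after~(\ref{Z-G-fix}). Hence both sets constitute a basis of $\dot\bU_{n,\ve}$, the unitriangularity transferring the basis property from one to the other.

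The main obstacle will be the bookkeeping in the inductive application of Lemma~\ref{H-leading}: at each multiplication one must verify that the leading matrix of the running partial product meets the prescribed vanishing pattern in rows $i$ and $i+1$, and correctly read off the parameters $h,c,j_0$ from the recursive definition of $G_{ij}$. A secondary technical point is the very-even case $(v,\ve)=(\mbox{even},1)$, where $[\overline Z_{\overline A,\ve}]$ splits as a sum over two irreducible components; but since $\sqsubseteq$ depends only on the flag-variety orbit $Y_{\overline A,\ve}$ and not on the decomposition, both the triangularity and the linear independence persist.
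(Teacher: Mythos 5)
Your proof takes essentially the same route as the paper: iterated application of Lemma~\ref{H-leading} along the order $\triangleleft$ for the triangular formula, followed by a unitriangular change-of-basis and a closure-under-convolution argument. Two small points deserve tightening, though. First, your parenthetical claim that the factors $[\overline Z_{\overline G_{ij},\ve}]$ are ``isolated'' as $\e_{i,\ve}\mbf 1_{\bar\lambda,\ve}$ is accurate only when the off-diagonal entry $c_{ij}=\sum_{k\geq i}a_{kj}$ equals $1$; for $c_{ij}>1$ one must observe (using Lemma~\ref{H-leading} in the ``rank-one'' situation, where no lower terms can occur because the target matrix is already $\sqsubseteq$-minimal among matrices with its row/column sums) that $\e_{i,\ve}^{c}\mbf 1_{\bar\lambda,\ve}$ is a nonzero scalar multiple of $[\overline Z_{\overline G,\ve}]$, so that each factor of $m_{\overline A,\ve}$ indeed lies in $\dot\bU_{n,\ve}$; note the induction on $\sqsubseteq$ does not directly apply here because the $G_{ij}$ have different row/column sums than $A$. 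Second, for closure of $\mrm{span}\{[\overline Z_{\overline A,\ve}]\}$ under convolution in the projective limit, the finite-level top Borel-Moore argument after (\ref{Z-G-fix}) is not quite enough by itself: you also need the compatibility of the product with the transfer maps, which is exactly the content of Proposition~\ref{stab} (and is what the paper cites at this point).
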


\begin{proof}
The first statement is obtained by applying Lemma~\ref{H-leading}.
So by an induction with respect to  $\sqsubseteq$, we have
$[\overline Z_{\overline A,\ve}] \in \dot \bU_{n, \ve}$. The $[\overline Z_{\overline A,\ve}]$s are clearly linearly independent. 
In light of Proposition~\ref{stab}, we see that any product of the fundamental classes
$[\overline Z_{\overline C,\ve}]$ such that  $C- c E^{\theta}_{i,i+1}$ is diagonal for some $c$ is a linear sum of
$[\overline Z_{\overline A,\ve}]$ for various $A$. 
This shows that the $[\overline Z_{\overline A,\ve}]$s form a basis of $\dot \bU_{n,\ve}$. So does the $m_{\overline A,\ve}$s. 
The proposition is thus proved.  
\end{proof}

\begin{rem}
By Proposition~\ref{H-estimate} and Remark~\ref{Z-infty}, $\dot \bU_{n, \ve}=\mrm{span}_{\mbb Q}\{ [\overline Z_{\overline A,\ve}]| \overline A\in \overline \Theta_{\infty}\}$ and can be thought of as the `top Borel-Moore homology' of $Z_{n, \infty,\ve}$.
\end{rem}

\subsection{Rational modules}

We provide a geometric realization of rational simple modules. 
We also identify the category $\CC(\mathfrak{sl}^{\theta}_n)$ with certain category of $\G_{v+\infty,\ve}$-equivariant perverse sheaves on the ind-variety $\N_{n, v+\infty,\ve}$.

Recall that an orbit $\mathcal O_{\mu,\ve}$, or simply $\mu$, in $\N_{n, v, \ve}$  is relevant if  for any $x\in \mathcal O_{\mu,\ve}$,
\[
\dim \pi^{-1}(x) \cap \F_{\mbf d,\ve} = \frac{1}{2} \mrm{codim}_{\N_{n, v, \ve}} \mathcal O_{\mu,\ve}, \quad \mbox{for some}\ \mbf d\in \Lambda_v. 
\]
Let $\mbf C(\mu )= \G_{v,\ve}(x)/\G^o_{v,\ve}(x)$ for any $x\in \mathcal O_{\mu,\ve}$.
Note that $\mbf C(\mu)$ is independent of the choice of $x$. 
The finite group $\mbf C(\mu)$ is known to be a product of copies of $\mbb Z/2\mbb Z$.  
To each representation $\psi\in \mbf C(\mu)$, one can attach a $\G_{v,\ve}$-equivariant local system $\mathcal L_{\psi}$ on 
$\mathcal O_{\mu,\ve}$. 
Let $\IC_{\mu,\psi}$ denote the simple $\G_{v,\ve}$-equivariant  perverse sheaf on $\N_{n, v, \ve}$ so that  its support is 
$\overline{\mathcal O}_{\mu,\ve}$ and 
$\IC_{\mu,\psi}|_{\mathcal O_{\mu,\ve}}= \mathcal L_{\psi}[\dim \mathcal O_{\mu,\ve}]$;
see~\cite[1.6]{Lu07} for details.
If the local system $\psi$ is trivial, we simply write $\IC_\mu$ or $\IC_{\overline{\mathcal O}_{\mu,\ve}}$ instead. 
Recall the complex $L_{n, v, \ve}$ from (\ref{L}). 
Since there is a canonical $\G_{v,\ve}$-equivariant structure on $\IC_{\M_{n, v,\ve}}$ and $\pi$ is $\G_{v,\ve}$-equivariant,
the semisimple complex $L_{n,v,\ve}$ is $\G_{v,\ve}$-equivariant. 
Let $W_{\mu,\psi} = \Hom_{\G_{v,\ve}} (L_{n, v, \ve}, \IC_{\mu,\psi})$,
where the `$\Hom$' is taken in the category of $\G_{v,\ve}$-equivariant perverse sheaves on $\N_{n, v, \ve}$. 
Thus, as $\G_{v,\ve}$-equivariant perverse sheaves,  we can write
\begin{align}
\label{L-W}
L_{n, v, \ve} \cong \oplus_{(\mu,\psi)} \IC_{\mu,\psi} \otimes W_{\mu,\psi},
\end{align}
where the direct sum is over all pairs $(\mu,\psi)$ where
$\mu$ is a  relevant partition   and
$\psi$ runs over the set $\mbf C(\mu)^{\vee}$ of  irreducible representations of $\mbf C(\mu)$ such that $W_{\mu,\psi} \neq 0$.

Note that $\mbf C(\mu)\leq \mbf C(n^2, \mu)$.

\begin{prop}
\label{IC-stab}
We have $\mbf C(\mu)^{\vee} =\mbf C(n^2,\mu)^{\vee}$. 
Moreover,  with $a_{\ve}=\dim \mathcal O_{i_{\ve}(0)}$,
\begin{align}
i^*_{\ve} [-a_{\ve}] (\IC_{\nu,\psi}) = 
\begin{cases}
\IC_{\mu, \psi}, &\mbox{if}\ \nu = (n^2,\mu),\\
0, &\mbox{o.w.} 
\end{cases}
\end{align}
\end{prop}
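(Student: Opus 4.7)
My plan is to combine the stratified product structure of Proposition~\ref{U} with the rigidity of IC sheaves. First, for vanishing when $\nu$ is not of the form $(n^2,\mu)$: the support of $\IC_{\nu,\psi}$ is $\bar{\mathcal O}_{\nu,\ve}$, so the pullback is nonzero only when $\bar{\mathcal O}_{\nu,\ve}$ meets $i_\ve(\N_{n,v,\ve})$. Every point of this intersection has Jordan type $(n^2,\mu')$ for some $\mu'$, so (\ref{O-order}) forces $(n^2,\mu') \leq \nu$ in dominance order. Comparing the first two partial sums forces $\nu_1 = \nu_2 = n$, i.e., $\nu = (n^2,\mu)$; in all other cases the pullback vanishes.

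For $\nu = (n^2,\mu)$, Proposition~\ref{U} produces an open neighborhood $U$ of $i_\ve(\N_{n,v,\ve})$ and a stratification-preserving isomorphism $U \cong (\mathcal O_{i_\ve(0)} \cap U) \times i_\ve(\N_{n,v,\ve})$ that carries $\mathcal O_{\nu,\ve} \cap U$ to $(\mathcal O_{i_\ve(0)} \cap U) \times i_\ve(\mathcal O_{\mu,\ve})$. Since the first factor is smooth of dimension $a_\ve$, the K\"unneth property of IC sheaves yields
\[
\IC_{\nu,\psi}|_U \cong \mbb C_{\mathcal O_{i_\ve(0)} \cap U}[a_\ve] \boxtimes \IC_{\mu,\psi''}
\]
for a local-system label $\psi''$ on $\mathcal O_{\mu,\ve}$; pulling back along the zero-section $i_\ve$ and shifting by $[-a_\ve]$ returns $\IC_{\mu,\psi''}$. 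To identify $\psi'' = \psi$, I would arrange the slicing of Proposition~\ref{U} to be equivariant for the centralizer of $e_\ve$, so that the $\mbf C(\nu)$-action on the stalk at $i_\ve(x_\mu)$ restricts to the $\mbf C(\mu)$-action; the combinatorial description of component groups of classical nilpotent orbits (distinct odd parts for $\ve = 1$, distinct even parts for $\ve = -1$) then gives a canonical identification $\mbf C(\mu) \cong \mbf C(n^2,\mu)$ under which $\psi$ matches $\psi''$.

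Finally, $\mbf C(\mu)^\vee = \mbf C(n^2,\mu)^\vee$ will follow from the canonical identification $i^*_\ve[-a_\ve] L_{n,v+2n,\ve} \cong L_{n,v,\ve}$ established in the proof of Proposition~\ref{phi-homo}: substituting the decomposition (\ref{L-W}) and invoking the two preceding steps gives
\[
L_{n,v,\ve} \cong \bigoplus_{\mu,\,\psi \in \mbf C(n^2,\mu)^\vee} \IC_{\mu,\psi} \otimes W_{(n^2,\mu),\psi},
\]
and comparing with $L_{n,v,\ve} \cong \bigoplus_{\mu,\,\phi \in \mbf C(\mu)^\vee} \IC_{\mu,\phi} \otimes W_{\mu,\phi}$ via uniqueness of the semisimple decomposition forces $\mbf C(\mu)^\vee = \mbf C(n^2,\mu)^\vee$ (and in fact $W_{\mu,\psi} \cong W_{(n^2,\mu),\psi}$). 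The main obstacle is the identification $\psi'' = \psi$: it requires promoting the set-theoretic slicing of Proposition~\ref{U} to an equivariant isomorphism in a neighborhood of each stratum, which is especially delicate in the very even case ($\ve = 1$, $v$ even, $n$ even), where $\G_{v,\ve}$ is disconnected and the $\mbb Z/2\mbb Z$-equivariant framework emphasized in the introduction must be invoked.
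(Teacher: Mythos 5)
Your proposal follows essentially the same route as the paper's proof---the support/dominance argument for vanishing, then the product decomposition of the neighborhood $U$ from Proposition~\ref{U} combined with K\"unneth for IC sheaves---so it is in the same spirit. A few remarks on where the emphasis differs. Your vanishing step is actually more careful than the paper's: the paper passes from ``$\mathcal O_{\nu,\ve}$ does not meet $i_\ve(\N_{n,v,\ve})$'' directly to ``$\overline{\mathcal O}_{\nu,\ve}$ does not meet $i_\ve(\N_{n,v,\ve})$,'' an implication that is not automatic; your dominance-order computation ($n\leq \nu_1\leq n$ and $2n\leq \nu_1+\nu_2\leq 2n$ forcing $\nu_1=\nu_2=n$) makes it airtight, and is the same information contained in the earlier stabilization lemma. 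For the main case, the paper works ``top-down'': it invokes Proposition~\ref{phi-U} once to obtain the single identity $L_{n,v+2n,\ve}|_U \cong \IC_{\mathcal O_{i_\ve(0)}\cap U}\boxtimes L_{n,v,\ve}$, and then everything (including $\mbf C(\mu)^\vee=\mbf C(n^2,\mu)^\vee$) is read off by Krull--Schmidt when both sides are expanded by~(\ref{L-W}). You go ``bottom-up'' by applying Proposition~\ref{U} and K\"unneth to each $\IC_{\nu,\psi}$ individually; this is equivalent, but forces the label-matching $\psi''=\psi$ to be confronted head-on, which you correctly flag. It is worth noting that the paper's packaging does not in fact escape this subtlety---Krull--Schmidt gives \emph{some} bijection of labels, but identifying it with the natural one induced by $\mbf C(\mu)\leq\mbf C(n^2,\mu)$ still requires tracking how local systems transform under the slicing isomorphism---so your explicit acknowledgment of the equivariance issue (especially in the disconnected very even case) is an honest reading of a point the paper leaves implicit in ``the remaining statements follow from the above.''
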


\begin{proof}
If $\nu$ is not of the form $(n^2,\mu)$ for some $\mu$, then $\mathcal O_{\nu,\ve}\cap i_{\ve}( \N_{n, v, \ve}) =\O$, 
then $\overline{\mathcal O}_{\nu,\ve} \cap i_{\ve}( \N_{n, v,\ve})=\O$, and so
$i^*_{\ve}(\IC_{\nu,\psi}) =0$. 
By Proposition~\ref{phi-U}, we see that
\begin{align}
\label{L-U}
L_{n, v+2n, \ve}|_{U} = \IC_{\mathcal O_{i_{\ve}(0)}\cap U} \boxtimes L_{n, v,\ve}. 
\end{align}
The remaining statements follow from the above. 
\end{proof}

\begin{prop}
\label{V-stab}
For any relevant partition  $\mu$ and $\psi\in \mbf C(\mu)$, we have
\begin{align}
\label{W-stab}
W_{\mu,\psi}\cong W_{(n^2,\mu), \psi},
\end{align}
as irreducible  $\mathfrak{sl}^{\theta}_n$-modules.
\end{prop}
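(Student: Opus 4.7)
The plan is to derive the isomorphism by applying $i^*_{\ve}[-a_{\ve}]$ to the isotypic decomposition (\ref{L-W}) at level $v+2n$ and comparing with (\ref{L-W}) at level $v$, then upgrading the resulting vector space isomorphism to an $\mathfrak{sl}_n^\theta$-equivariant one by invoking Proposition~\ref{phi-homo}.

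First I would write out, for the larger level, the decomposition
\[
L_{n,v+2n,\ve} \;\cong\; \bigoplus_{(\nu,\psi)} \IC_{\nu,\psi}\otimes W_{\nu,\psi}.
\]
Applying the functor $i^*_{\ve}[-a_{\ve}]$ and combining with the canonical isomorphism $i^*_{\ve}[-a_{\ve}](L_{n,v+2n,\ve})\cong L_{n,v,\ve}$ established in Section~\ref{Z-stab}, together with Proposition~\ref{IC-stab} (which collapses the sum to those $\nu$ of the form $(n^2,\mu)$ and identifies the surviving $\IC_{(n^2,\mu),\psi}$ with $\IC_{\mu,\psi}$), yields
\[
L_{n,v,\ve} \;\cong\; \bigoplus_{(\mu,\psi)} \IC_{\mu,\psi}\otimes W_{(n^2,\mu),\psi}.
\]
Comparing with the original decomposition (\ref{L-W}) at level $v$ and using the Krull--Schmidt property of the isotypic decomposition of a semisimple perverse sheaf, I obtain $W_{\mu,\psi}\cong W_{(n^2,\mu),\psi}$ as $\mathbb{Q}$-vector spaces, together with the index-set identification $\mathbf{C}(\mu)^\vee=\mathbf{C}(n^2,\mu)^\vee$ already asserted in Proposition~\ref{IC-stab}.

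Next I would verify that this vector space isomorphism intertwines the $\mathfrak{sl}_n^\theta$-actions. The action on $W_{\mu,\psi}$ is, by construction, obtained by composing $\U(\mathfrak{sl}_n^\theta)\to\bU_{n,\ve}\subseteq \varprojlim_k \H(Z_{n,v+2kn,\ve})$ with the projection to $\H(Z_{n,v,\ve})\cong \Ext^*(L_{n,v,\ve},L_{n,v,\ve})$, followed by the projection onto the summand $\End(W_{\mu,\psi})$ afforded by the isotypic decomposition; analogously for $W_{(n^2,\mu),\psi}$ at level $v+2n$. By Proposition~\ref{phi-homo} the transfer map $\phi_{v+2n,v}$ is precisely the algebra map induced by the isomorphism $i^*_{\ve}[-a_{\ve}](L_{n,v+2n,\ve})\cong L_{n,v,\ve}$, so the square
\[
\begin{CD}
\Ext^*(L_{n,v+2n,\ve},L_{n,v+2n,\ve}) @>>> \End(W_{(n^2,\mu),\psi})\\
@V\phi_{v+2n,v} VV @VV\cong V\\
\Ext^*(L_{n,v,\ve},L_{n,v,\ve}) @>>> \End(W_{\mu,\psi})
\end{CD}
\]
commutes; the compatibility of $\U(\mathfrak{sl}_n^\theta)\hookrightarrow \varprojlim_k\H$ with the projections $\phi_{v+2kn,v}$ then transports the action on $W_{(n^2,\mu),\psi}$ to that on $W_{\mu,\psi}$.

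The only subtle point I anticipate is the very even case $(\ve=1,v=nk,n,k\text{ even})$, where $\N_{n,v,\ve}$ splits into two components and the $\mathbf{C}(\mu)$ calculation must distinguish $\G_{v,\ve}$ from its identity component. I would handle this exactly as in (\ref{Z-G-fix}), by working $\mathbb{Z}/2\mathbb{Z}$-equivariantly and using the compatibility (\ref{i-even}) of the embedding $i_\ve$ with the labeling of the two irreducible components, so that the bijection between relevant pairs $(\mu,\psi)$ and $((n^2,\mu),\psi)$ survives at this level. The remainder of the argument is then formal from the fully-faithful action of the transfer maps on local systems established in Proposition~\ref{IC-stab}.
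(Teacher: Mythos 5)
Your proposal is essentially the paper's own argument: both obtain the vector-space isomorphism $W_{\mu,\psi}\cong W_{(n^2,\mu),\psi}$ by matching isotypic summands under $i^*_{\ve}[-a_{\ve}]$ (the paper phrases this via the local product decomposition~(\ref{L-U}), which is what underlies Proposition~\ref{IC-stab}), and both upgrade it to an $\mathfrak{sl}_n^\theta$-equivariant isomorphism via the compatibility $\phi_{v+2n,v}=i^*_{\ve}[-a_{\ve}]$ of Proposition~\ref{phi-homo}, encoded in a commuting square. The only point to tighten is that the relevant identification is $\H(Z_{n,v,\ve})\cong\End_{\G_{v,\ve}}(L_{n,v,\ve})$ (Eq.~(\ref{Z-L}), using $\G_{v,\ve}$-equivariant $\End$, which matters precisely in the even orthogonal case) rather than the full $\Ext^*(L_{n,v,\ve},L_{n,v,\ve})$ as you wrote; you do gesture at this by invoking $\mathbb{Z}/2\mathbb{Z}$-equivariance, so this is a slip of notation rather than of substance.
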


\begin{proof}
The equality (\ref{L-U}) implies that $W_{\mu,\psi} \cong W_{(n^2,\mu), \psi}$.
Moreover  the following diagram is commutative in light of Proposition~\ref{IC-stab}. 
\[
\begin{CD}
\End_{\G_{v+2n,\ve}} (L_{n, v+2n, \ve}) \times \Hom_{\G_{v+2n,\ve}} (L_{n, v+2n, \ve}, \IC_{(n^2,\mu),\psi} ) @>>> \Hom_{\G_{v+2n,\ve}} (L_{n, v+2n, \ve}, \IC_{(n^2,\mu), \psi}) \\
@Vi^*_{\ve}[-a_{\ve}]\times i^*_{\ve}[-a_{\ve}] VV @VV i^*_{\ve}[-a_{\ve}] V\\
\End_{\G_{v,\ve}} (L_{n, v, \ve}) \times \Hom_{\G_{v,\ve}} (L_{n, v,\ve}, \IC_{\mu, \psi}) @>>> \Hom_{\G_{v,\ve}}(L_{n, v, \ve}, \IC_{\mu, \psi}).
\end{CD}
\]
But  we have 
\begin{align}
\label{Z-L}
\H(Z_{n, v, \ve})\cong \End_{\G_{v,\ve}} (L_{n, v, \ve})
\end{align} 
in light of (\ref{Z-G-fix}) and~\cite{Lu88} (see Remark~\ref{remark} (2) for an alternative proof).
Therefore the isomorphism (\ref{W-stab}) is compatible with the $\mathfrak{sl}_n^{\theta}$-actions. 
Proposition follows immediately.
\end{proof}

Observe that $W_{\mu,\psi}$ is a weight module: 
\[
W_{\mu, \psi}=\oplus_{\mbf d} W_{\mu,\psi}(\mbf d), \quad W_{\mu,\psi}(\mbf d) =W_{\mu,\psi} \cap \H_*(\pi^{-1}(x_{\mu})\cap \F_{\mbf d,\ve}).
\]
Moreover, the $e_{i,\ve}$-action is  locally nilpotent for all $i$. 
Recall the category $\CC(\mathfrak{sl}_n^{\theta})$ and $L'(\omega, \omega')$ from Section~\ref{Rep}.

\begin{thm}
\label{Springer}
The list of $W_{\mu,\psi}$,  where $\mu$ is a partition such that $\mu_1\leq n$ and $\mu_2<n$
and $\psi\in \mbf C(\mu)^{\vee}$,
exhausts all rational simple modules  $L'(\omega, \omega')$  in $\CC(\mathfrak{sl}_n^{\theta})$.
\end{thm}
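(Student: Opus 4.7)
The argument will be Springer-theoretic in the spirit of~\cite{CG97}, carried out in three phases: simplicity, rationality, and exhaustion. I expect the exhaustion step to be the main obstacle.

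First, I would derive simplicity from a Wedderburn decomposition. Combining the semisimple decomposition (\ref{L-W}) with the algebra identification $\H(Z_{n,v,\ve})\cong \End_{\G_{v,\ve}}(L_{n,v,\ve})$ from (\ref{Z-L}) gives
\[
\H(Z_{n,v,\ve})\;\cong\;\bigoplus_{(\mu,\psi)}\End_{\mbb Q}(W_{\mu,\psi}),
\]
so that each $W_{\mu,\psi}$ is a simple $\H(Z_{n,v,\ve})$-module and distinct pairs $(\mu,\psi)$ yield non-isomorphic simples. Pulling back along the algebra surjection $\U(\mathfrak{sl}_n^{\theta})\cong \bU_{n,\ve}\twoheadrightarrow \H(Z_{n,v,\ve})$ provided by Theorems~\ref{U-iso-odd}--\ref{U-iso-even} turns each $W_{\mu,\psi}$ into a finite-dimensional simple $\mathfrak{sl}_n^{\theta}$-module. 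Proposition~\ref{V-stab} renders the isomorphism class invariant under $\mu\mapsto (n^2,\mu)$, which justifies the restriction to partitions $\mu$ with part-$n$ multiplicity $\leq 1$ in the indexing set.

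Second, I would verify that each $W_{\mu,\psi}$ lies in $\CC(\mathfrak{sl}_n^{\theta})$. The weight decomposition $W_{\mu,\psi}=\oplus_{\mbf d}W_{\mu,\psi}(\mbf d)$ realizes the Cartan $\mathfrak h_{\theta}$ by integer eigenvalues, which one reads off from the flag-type $\mbf d$ through the explicit formulas given for $h_{i,\theta}$ in Section~\ref{Pre} and for $h'_{i,\theta}$ in Section~\ref{Rep}. Together with local nilpotence of all $e_{i,\ve}$, this produces a highest-weight vector for the Borel $\U^+$ of Section~\ref{Rep} with integral weights, and the Borel-subalgebra theorem of~\cite{LZ19} then identifies $W_{\mu,\psi}$ with some $L'(\omega,\omega')\in \CC(\mathfrak{sl}_n^{\theta})$.

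The exhaustion step---which I expect to be the real obstacle---amounts to producing an explicit bijection between the geometric parameter set $\{(\mu,\psi)\}$ and the set of isomorphism classes of rational simples. By the tensor-product description in Section~\ref{Rep}, the latter is indexed by pairs of dominant integer weights of $\mathfrak{gl}_{\lfloor n/2\rfloor}\oplus \mathfrak{gl}_{\lceil n/2\rceil}$. For each $(\mu,\psi)$ I would identify a distinguished flag-type $\mbf d^+$ on which $W_{\mu,\psi}(\mbf d^+)$ carries a highest-weight line, then read off the eigenvalues of $h_{i,\theta}$ and $h'_{i,\theta}$ to obtain an explicit pair $(\omega(\mu,\psi),\omega'(\mu,\psi))$, and verify combinatorially that this assignment is bijective. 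The verification should be carried out by case analysis on the parities of $(n,v)$ and on $\ve$; the stable comparison across $v$ uses Proposition~\ref{IC-stab}, the disconnectedness of $\G_{v,\ve}$ in the even-orthogonal case is absorbed by the $\mbb Z/2\mbb Z$-equivariance encoded in (\ref{Z-G-fix}), and the very-even partition case is controlled by the refined component analysis culminating in Proposition~\ref{S-irr-2}. Matching the enumeration of characters $\psi\in \mbf C(\mu)^{\vee}$ against the fine structure of rational dominant weights (partitions of the two blocks) will be the technical heart of the argument.
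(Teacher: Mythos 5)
Your plan separates cleanly into simplicity, rationality, and exhaustion, and you correctly identify exhaustion as the crux. However, the way you propose to handle the crux is not a proof but a description of the problem, and in fact it is a harder problem than the one the paper solves: the paper explicitly records in Remark~\ref{remark}(1) that an explicit correspondence between the geometric labels $(\mu,\psi)$ and the algebraic labels $(\omega,\omega')$ is still desirable and \emph{not} established. Your Phase~3 asks you to produce exactly that bijection, and you leave its verification to an unspecified ``case analysis.'' As written, this does not close the gap.

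The paper's route is quite different and avoids the explicit matching entirely. It passes to the Grothendieck resolution $\pi_g:\tilde{\mathfrak g}_{n,v,\ve}\to\mathfrak g_{v,\ve}$, uses the Fourier transform identity $\mbb F(L^g_{n,v,\ve})=L_{n,v,\ve}$, restricts to the regular semisimple locus, and identifies the endomorphism algebra by monodromy with $\End_{W_{B_d}}((\mbb Q^n)^{\otimes d})$ (with a $W_{D_d}\rtimes\mbb Z/2\mbb Z$ modification in the even orthogonal case). This reduces everything to Schur--Weyl duality for the pair $(\mathfrak{sl}_n^\theta, W_{B_d})$: the tensor space $(\mbb Q^n)^{\otimes d}$ is rational as an $\mathfrak{sl}_n^\theta$-module by \cite{LZ19,SS99}, so every $W_{\mu,\psi}$, being a simple summand, is automatically rational; and conversely any rational simple $L'(\omega,\omega')$ occurs in $(\mbb Q^n)^{\otimes d}$ for $d$ large, and therefore coincides with some $W_{\mu,\psi}$ by the same decomposition. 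Proposition~\ref{V-stab} is then used only to collapse the stable family $(n^2,\mu)\leftrightarrow\mu$, not to index anything. This double-counting via the tensor space gives exhaustion for free, with no combinatorics at all. Your Phase~2 also has a soft spot: showing $W_{\mu,\psi}$ has an integral $\mathfrak h_\theta$-weight and a highest-weight vector does not by itself place it in $\CC(\mathfrak{sl}_n^\theta)$ without the tensor-product/rationality criterion from Section~\ref{Rep}, which again the paper gets directly from the tensor-space realization rather than from weight bookkeeping. To rescue your approach you would either need to carry out the parameter bijection in full (a contribution the paper itself flags as open) or import the Fourier/Schur--Weyl reduction.
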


\begin{proof}
We introduce the following Grothendieck resolution. 
\[
\pi_g:  \tilde{\mathfrak{g}}_{n,v,\ve} \to \mathfrak{g}_{v,\ve}, \quad \mathfrak{g}_{v,\ve}=\mrm{Lie} \G_{v,\ve},
\]
where
\[
 \tilde{\mathfrak{g}}_{n,v,\ve} =\{ (x, F)\in \mathfrak g_{v,\ve}\times \F_{n, v, \ve} | x(F_i) \subseteq F_i,\ \forall i\},
\]
and $\pi_g$ is the first projection.  
Let $L^g_{n,v,\ve} = (\pi_g)_* \IC_{\tilde{\mathfrak{g}}_{n,v,\ve}}$.
It is well-known (e.g.,~\cite{G98}) that 
\begin{align}
\label{Fourier-L}
\mbb F(L^g_{n, v, \ve}) = L_{n, v, \ve}
\end{align}
where  $\mbb F$ is the Fourier transform. Let $\mathfrak g^{rs}_{v,\ve}$ be the open dense subvariety of $\mathfrak g_{v,\ve}$ consisting of all regular semisimple elements. 
Let $\pi_1(\mathfrak g^{rs}_{v,\ve})$ be its fundamental group with respect to a base point, say $x$. 
Observe that the fiber of $x$ under $\pi_g$ is naturally parametrized by the collection of  matrices  $T=(t_{ij})_{1\leq i\leq n, 1\leq j\leq v}$, which has a 1 in each column and 0 else where,  subject to the symmetry 
$t_{ij} = t_{n+1-i, v+1-j}$. 
In particular, we see that the stalk of $L^g_{n, v, \ve}$ at $x$ is isomorphic to the tensor space $(\mbb Q^n)^{\otimes d}$
Now assume that $(v,\ve)\neq (\mbox{even}, 1)$, applying the argument in~\cite[Section 10]{G98} (see also \cite{BG99}), we have
\begin{align}
\label{Fourier}
\begin{split}
\oplus_{\mu, \phi} \mrm{End}(W_{\mu,\phi}) & \overset{(\ref{L-W})}{\cong} \mrm{End}_{\G_{v,\ve}}(L_{n,v,\ve}) \\
&\overset{(\ref{Fourier-L})}{\cong} \mrm{End}_{\G_{v,\ve}} ( L^g_{n, v, \ve}) \\
& \cong \mrm{End} (L^g_{n, v,\ve}|_{\mathfrak g_{v,\ve}^{rs}}) \quad \quad \quad (\mbox{perverse extension})\\
& \cong \mrm{End}_{\pi_1(\mathfrak g_{v,\ve}^{rs})} ( L^g_{n, v,\ve}|_{x}) \ \quad (\mbox{monodromy}) \\
&\cong \mrm{End}_{W_{B_d}} ( (\mbb Q^n )^{\otimes d}) \quad \quad (\mbox{\cite[Lem. 10.2]{G98}})
\end{split}
\end{align}
where $W_{B_d}$ is the Weyl group of type $B$ of rank $d=\lfloor v/2\rfloor$.
If $(v,\ve) =(\mbox{even}, 1)$, then $\G_{v,\ve} \cong \mrm{SO}_{v} \rtimes \mbb Z/2\mbb Z$ and
the above argument holds with the following modifications:
\begin{align}
\label{Fourier-2}
\begin{split}
\oplus_{\mu, \phi} \mrm{End}(W_{\mu,\phi}) & \cong \mrm{End}_{\G_{v,\ve}}(L_{n,v,\ve}) \\
& \cong \mrm{End}_{\mbb Z/2\mbb Z} (L^g_{n, v,\ve}|_{\mathfrak g_{v,\ve}^{rs}}) \\
& \cong \mrm{End}_{\pi_1(\mathfrak g_{v,\ve}^{rs})\rtimes \mbb Z/2\mbb Z} ( L^g_{n, v,\ve}|_{x}) \\
&\cong \mrm{End}_{W_{D_d}\rtimes \mbb Z/2\mbb Z} ( (\mbb Q^n )^{\otimes d}) \\
&\cong \mrm{End}_{W_{B_d}}  ( (\mbb Q^n )^{\otimes d})
\end{split}
\end{align}
where $W_{D_d}$ is the Weyl group of type $D$ of rank $d$.
Thanks to  ~\cite{LZ19, SS99} , 
the tensor space $(\mbb Q^n)^{\otimes d}$, as a natural $\mathfrak{sl}_n^{\theta}$-module,  is rational and so is  $W_{\mu,\phi}$, as its simple summand. 
Moreover, any $L'(\omega,\omega')$ is a simple summand in $(\mbb Q^n )^{\otimes d}$ for $d$ large enough.
This implies that $L'(\omega,\omega')$ has to be some $W_{\mu,\psi}$ thanks to Proposition~\ref{V-stab}. The proof is finished.  
\end{proof}

\begin{rem}
\label{remark}
(1). It is desirable to describe relevant partitions  and the associated set $\mbf C(\mu)^{\vee}$. 
It is also desirable to establish a correspondence  between $W_{\mu,\phi}$ and $L'(\omega,\omega')$.

(2). We provide an alternative proof of (\ref{Z-L}). That statement is well-known except the case: $(v,\ve)=(\mbox{even}, 1)$.
If $n$ is odd, we have the following commutative diagram
\[
\begin{CD}
\H(Z_{n,v+n,\ve}) @>\cong>> \End_{\G_{v+n,\ve}}(L_{n, v+n,\ve})\\
@V\phi_{v+n,v} VV @VV i^*_{\ve}[-a_{\ve}] V\\
\H(Z_{n,v,\ve})  @>\psi>> \End_{\G_{v,\ve}}(L_{n,v,\ve}).
\end{CD}
\]
Since $i^*_{\ve}[-a_{\ve}]$ is surjective by Proposition~\ref{IC-stab}, $\psi$ is surjective too. Thanks to  (\ref{Fourier-2}), we know that the dimensions of the domain and range of $\psi$ are the same. So
$\psi$ must be  an isomorphism.  The $n$=even case follows also because they are subalgebras of $n$=odd cases.

(3). The analyses (\ref{Fourier})-(\ref{Fourier-2}) also provides an alternative proof that $\H(Z_{n, v,\ve})$ receives a surjective homomorphism from $\U(\mathfrak{sl}_n^{\theta})$.
\end{rem}

Let $\Q_{n, v,\ve}$ be the semisimple abelian subcategory of the category of perverse sheaves on $\N_{n, v, \ve}$ whose simple objects are 
direct summands in $L_{n,v,\ve}$. 
Then 
$
P\mapsto \Hom_{\G_{v,\ve}} (L_{n, v, \ve}, P)
$
defines an equivalence of categories
\[
\Q_{n, v, \ve} \to \H (Z_{n, v, \ve})\mbox{-mod}.
\]

Consider the projective system $(\Q_{n, v, \ve}, i^*_{\ve}[-a_{\ve}]))_{v}$. In view of Proposition~\ref{IC-stab}, we can consider the full 
abelian subcategory $\Q_{n, \infty, \ve}$ in the limit of the above  projective system
whose simple objects are $\varprojlim_\ell (i^*_{\ve}[-a_{\ve}])^\ell ( \IC_{x, \psi})$. 
Theorem~\ref{Springer} can be reformulated as follows (compare~\cite{VV03}).

\begin{prop}
We have an equivalence of abelian categories
\begin{align}
\Q_{n, \infty,\ve} \cong \CC( \mathfrak{sl}_n^{\theta}).
\end{align}
\end{prop}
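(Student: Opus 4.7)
The plan is to assemble, from the finite-level equivalences $\Q_{n,v,\ve} \cong \H(Z_{n,v,\ve})\text{-mod}$ given by $P \mapsto \Hom_{\G_{v,\ve}}(L_{n,v,\ve}, P)$, a single functor $\Phi: \Q_{n,\infty,\ve} \to \CC(\mathfrak{sl}_n^{\theta})$ from the projective limit. By Proposition~\ref{IC-stab}, simple objects of $\Q_{n,\infty,\ve}$ are indexed by compatible systems $(\IC_{(n^{2\ell},\mu),\psi})_\ell$, which are parametrized by a canonical representative $\mu$ in which the part $n$ has multiplicity at most $1$, together with $\psi \in \mbf{C}(\mu)^\vee$ (the latter well-defined since $\mbf{C}(\mu)^\vee = \mbf{C}(n^2,\mu)^\vee$). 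On such a simple, set $\Phi(\varprojlim_\ell (i^*_{\ve}[-a_{\ve}])^\ell(\IC_{(n^{2\ell},\mu),\psi})) := W_{\mu,\psi}$, and extend additively to direct sums.

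The $\mathfrak{sl}_n^{\theta}$-action on $\Phi(P)$ is induced from the action of the limit algebra $\varprojlim \H(Z_{n,v,\ve})$, which contains $\bU_{n,\ve} \cong \U(\mathfrak{sl}_n^{\theta})$ by Theorems~\ref{U-iso-odd} and~\ref{U-iso-even}, on the projective system of $\Hom$ spaces. Well-definedness at the level of $\mathfrak{sl}_n^{\theta}$-modules is essentially the content of Proposition~\ref{V-stab}: the natural stabilization isomorphism $W_{\mu,\psi} \cong W_{(n^2,\mu),\psi}$ intertwines the actions of $\H(Z_{n,v,\ve})$ and $\H(Z_{n,v+2n,\ve})$ under the transfer map $\phi_{v+2n,v}$, which by Proposition~\ref{phi-homo} coincides with $i^*_{\ve}[-a_{\ve}]$. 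In the variant cases (Section~\ref{variant}), one uses Proposition~\ref{i-homo} in place of Proposition~\ref{stab} to obtain the analogous compatibility.

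Essential surjectivity then follows immediately from Theorem~\ref{Springer}, which exhibits every rational simple $L'(\omega,\omega')$ as some $W_{\mu,\psi}$ with $\mu$ having the part $n$ of multiplicity at most $1$. Fully faithfulness reduces, by the semisimplicity of both categories, to Schur's lemma on simple objects, together with the injectivity of $(\mu,\psi) \mapsto W_{\mu,\psi}$ on canonical representatives, again guaranteed by Theorem~\ref{Springer}. The main obstacle is the compatibility claim in the second paragraph: one must verify that the isomorphism $W_{\mu,\psi} \cong W_{(n^2,\mu),\psi}$ from Proposition~\ref{V-stab} is not merely an abstract $\mathfrak{sl}_n^{\theta}$-equivariant identification but is the very morphism obtained by restricting the $\G_{v+2n,\ve}$-equivariant structure along $i_{\ve}$. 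Tracing this through the commutative square in the proof of Proposition~\ref{V-stab}, together with the identification $\phi_{v+2n,v} = i^*_{\ve}[-a_{\ve}]$ from Proposition~\ref{phi-homo} and the semisimplicity criterion $\H(Z_{n,v,\ve}) \cong \End_{\G_{v,\ve}}(L_{n,v,\ve})$ recorded in Remark~\ref{remark}(2), supplies the required compatibility and completes the proof.
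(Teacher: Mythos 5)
Correct, and essentially the paper's intended argument: the paper gives no proof beyond asserting that the proposition is a reformulation of Theorem~\ref{Springer}, and your write-up is exactly the unpacking of that reformulation, assembling the finite-level equivalences $\Q_{n,v,\ve}\cong\H(Z_{n,v,\ve})\text{-mod}$ through the stabilization Propositions~\ref{IC-stab} and~\ref{V-stab} (with the identification $\phi_{v+2n,v}=i^*_{\ve}[-a_{\ve}]$ from Proposition~\ref{phi-homo}) and invoking Theorem~\ref{Springer} for exhaustion. One small attribution nit: the injectivity of $(\mu,\psi)\mapsto W_{\mu,\psi}$ on canonical representatives is not literally contained in Theorem~\ref{Springer}; it is immediate from the decomposition (\ref{L-W}) together with the standard fact that multiplicity spaces of pairwise non-isomorphic simple summands of a semisimple complex are pairwise non-isomorphic as modules over the endomorphism algebra $\End_{\G_{v,\ve}}(L_{n,v,\ve})\cong\H(Z_{n,v,\ve})$, hence over its preimage $\U(\mathfrak{sl}_n^{\theta})$.
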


\section{Proof of Theorems~\ref{U-iso-odd} and~\ref{U-iso-even}}

This section is devoted to the proof of the main Theorems~\ref{U-iso-odd} and~\ref{U-iso-even}. 
It takes three steps. The first step is to prove Theorem~\ref{U-iso-odd} for $n=3$ by constructing a series of representations naturally arising from geometry and show that the algebras involved acts
faithfully on the product of all these representations.
The second step is to check that the defining relations of $\U(\mathfrak{sl}_n^{\theta})$ are satisfied on the Schur algebra level. 
The theorem will then follow thanks to Proposition~\ref{H-estimate} for $n$ odd finally. 
Theorem~\ref{U-iso-even} is a consequence of Theorem~\ref{U-iso-odd} because the defining relations in (\ref{Def-even})
can be deduced from those in (\ref{Def-odd}).

As a digression, we discuss the $(n, v, \ve)= ( even, odd, 1)$  case and a natural representation.

\subsection{Analysis in the $n=3$ case}
\label{n=3}

Recall that $V$ is a $v$-dimensional complex vector space with an $\ve$-form.
In this section, we set
\[
d\equiv d_v:=\lfloor v/2\rfloor.
\]
Let $\Gr_i^v$ (resp. $\Gr_{i,\ve}^v$) be the (resp. isotropic) Grassmannian of $i$-dimensional (resp. isotropic) subspaces in $V$. 
Clearly $\Gr_{i,\ve}^v$ is empty unless $0\leq i\leq d$.
We set
\begin{align}
Y^+_{i,\ve} =\{ (F, F') \in \Gr_{i+1,\ve}^v \times \Gr_{i,\ve}^{v} | F \overset{1}{\supset} F'\},\
Y^-_{i,\ve} = \{ (F, F')\in \Gr_{i-1,\ve}^v \times \Gr_{i,\ve}^v | F\overset{1}{\subset} F'\}. 
\end{align}
Here the notations ``$\overset{1}{\supset}$'' and ``$\overset{1}{\subset}$'' denote that the vector spaces involved are contained in each other in the obvious way and 
the dimension difference is $1$.

Consider the fundamental classes $[T^*_{Y^{+}_{i, \ve}} ( \Gr_{i+1,\ve}^v \times \Gr_{i,\ve}^v)]$
(resp. $[T^*_{Y^-_{i,\ve}} (\Gr_{i-1,\ve}^v \times \Gr_{i,\ve}^v)]$)
of the conormal bundle of $Y^+_{i,\ve}$ (resp. $Y^-_{i, \ve}$)  in $\Gr_{i+1,\ve}^v \times \Gr_{i,\ve}^v$
 in $\H_*(Z_{n,v,\ve})$.
The following lemma is the analogue of~\cite[4.2.6]{CG97}.

\begin{lem}
We have
\begin{align}
[T^*_{Y^{+}_{i, \ve}} ( \Gr_{i+1,\ve}^v \times \Gr_{i,\ve}^v)] * [\Gr_{i,\ve}^v] & =(-1)^i  (i+1) [\Gr_{i+1,\ve}^v],\\
[T^*_{Y^-_{i,\ve}} (\Gr_{i-1,\ve}^v \times \Gr_{i,\ve}^v)] *[\Gr_{i,\ve}^v] & = (-1)^{ v-1-\delta_{1,\ve}} 
 2(d - i+1  ) [\Gr_{i-1,\ve}^v].
\end{align}
\end{lem}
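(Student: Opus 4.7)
The plan follows the strategy of [CG97, 4.2.5--4.2.6], suitably modified for the isotropic setting. First, by the convolution formula of [CG97, 2.7], one has
\[
[T^*_{Y^{\pm}_{i,\ve}}] * [\Gr^v_{i,\ve}] = p_{1*}\bigl(T^*_{Y^{\pm}_{i,\ve}}\cap p_2^{-1}(\Gr^v_{i,\ve})\bigr)
\]
inside $T^*\Gr^v_{i\pm 1,\ve}$, where $p_1,p_2$ are the two projections of $T^*(\Gr^v_{i\pm 1,\ve}\times\Gr^v_{i,\ve})$ and $\Gr^v_{i,\ve}\hookrightarrow T^*\Gr^v_{i,\ve}$ denotes the zero section. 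Since the first projection $Y^{\pm}_{i,\ve}\to\Gr^v_{i\pm 1,\ve}$ is smooth and surjective, the conormal condition together with the vanishing of the second covector forces the first covector to vanish, so the intersection is set-theoretically the zero section over $Y^{\pm}_{i,\ve}$.

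Next I would identify the fibers of this first projection. For $Y^+_{i,\ve}\to\Gr^v_{i+1,\ve}$ the fiber over $F$ is $\mathbb P(F)\cong\mathbb P^i$, because every subspace of an isotropic $F$ is isotropic. For $Y^-_{i,\ve}\to\Gr^v_{i-1,\ve}$ the fiber over $F$ is the variety of isotropic lines in the $\ve$-form space $F^\perp/F$ of dimension $v-2(i-1)$. Concretely, it is $\mathbb P^{v-2i+1}$ when $\ve=-1$, a smooth even-dimensional quadric of dimension $2(d-i)$ when $\ve=1$ and $v=2d$, and a smooth odd-dimensional quadric of dimension $2(d-i)+1$ when $\ve=1$ and $v=2d+1$.

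Because the actual intersection has excess equal to $\dim F$ (where $F$ denotes the fiber), the excess intersection formula expresses the refined intersection class as $c_{\dim F}(T^*F)\cap[Y^{\pm}_{i,\ve}]$, and pushing forward yields
\[
[T^*_{Y^{\pm}_{i,\ve}}] * [\Gr^v_{i,\ve}] = \Bigl(\int_F c_{\dim F}(T^*F)\Bigr)[\Gr^v_{i\pm 1,\ve}] = (-1)^{\dim F}\chi(F)\,[\Gr^v_{i\pm 1,\ve}],
\]
using $c_{\dim F}(T^*F)=(-1)^{\dim F}c_{\dim F}(TF)$ together with Chern--Gauss--Bonnet. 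A direct calculation gives $\chi(\mathbb P^i)=i+1$, producing the coefficient $(-1)^i(i+1)$ for $Y^+$. For $Y^-$, the Euler characteristics of a projective space, a smooth even quadric, and a smooth odd quadric of the relevant dimensions all equal $2(d-i+1)$, and the sign $(-1)^{\dim F}$ simplifies case-by-case to $(-1)^{v-1-\delta_{1,\ve}}$, matching the claim.

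The main obstacle I anticipate is the rigorous verification that the excess bundle in the above formula is precisely the relative cotangent bundle of the fibration along the zero section. This reduces to a short diagram chase at the tangent level using the Lagrangian structure of $T^*_{Y^{\pm}_{i,\ve}}$ and the smooth fibration structure of $Y^{\pm}_{i,\ve}\to\Gr^v_{i\pm 1,\ve}$. Once this identification is in place and the three $(v,\ve)$ cases for $Y^-$ are treated separately, the sign and the numerical coefficient emerge together from a single Chern--Gauss--Bonnet computation.
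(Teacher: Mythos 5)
Your argument is correct and follows essentially the same route as the paper: reduce the convolution to a pushforward along the smooth first projection of $Y^{\pm}_{i,\ve}$, identify the fibers as $\mathbb P^i$ (for $Y^+$) and isotropic lines in $F^\perp/F$ (for $Y^-$), and read off the coefficient as $(-1)^{\dim F}\chi(F)$. The only difference is that the paper simply cites~\cite[Lem.~8.5]{N98} for the statement that convolving a conormal Lagrangian against a zero section over a smooth fibration yields $(-1)^{\dim F}\chi(F)$ times the target fundamental class, whereas you re-derive this via the excess intersection formula and Chern--Gauss--Bonnet; the step you flag as requiring verification (that the excess bundle is the relative cotangent bundle of the fibration along the zero section) is exactly the content of that cited lemma.
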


\begin{proof}
We shall apply  Theorem 2.7.26 in~\cite{CG97} to our situation. 
Let us prove the first formula. 
We set
$X_1= \Gr_{i+1,\ve}^v$,   $X_2= \Gr_{i,\ve}^v$ and $X_3=\{\bullet\}$  a point.
We also set
$Y_{12} = Y^+_{i,\ve}$
and $Y_{23} = \Gr_{i,\ve}^v. $
Let $p_{ij}$ be the projection of $X_1\times X_2\times X_3$ to the factor $X_i\times X_j$. 
Then 
\[
p_{12}^{-1} (Y_{12}) \cap p_{23}^{-1} (Y_{23}) = Y_{12}\times \{\bullet\} \subseteq X_1\times X_2\times X_3.
\] 
The restriction of $p_{13}$ to $p_{12}^{-1} (Y_{12}) \cap p_{23}^{-1} (Y_{23}) $ is the first projection
$
Y_{12} \to \Gr_{i+1,\ve}^v$, 
$(F, F') \mapsto F, $
which is a smooth fibration with fiber isomorphic to  $\Gr^{i+1}_i$. 
So applying Theorem 2.7.26 in~\cite{CG97} and Lemma 8.5 in~\cite{N98} to get
\[
[T^*_{Y^{+}_{i, \ve}} ( \Gr_{i+1,\ve}^v \times \Gr_{i,\ve}^v)] * [\Gr_{i,\ve}^v] = (-1)^{\dim \Gr^{i+1}_i} (i+1)\chi (\Gr^{i+1}_i) [\Gr_{i+1,\ve}^v] = (-1)^{\dim \Gr^{i+1}_i} (i+1) [\Gr_{i+1,\ve}^v].
\]
The first formula is proved.

For the second one, we set
$X_1 = \Gr_{i-1,\ve}^v$, $X_2 =\Gr_{i,\ve}^v$ and  $X_3=\{\bullet\}.$
We also set
$Y_{12} = Y^-_{i,\ve} $ and  $Y_{23} =\Gr_{i,\ve}^v.$
Then 
$
p_{12}^{-1} (Y_{12}) \cap p_{23}^{-1} (Y_{23}) = Y_{12}\times \{\bullet\} \subseteq X_1\times X_2\times X_3.
$
The restriction of $p_{13}$ to $p_{12}^{-1} (Y_{12}) \cap p_{23}^{-1} (Y_{23}) $ is the first projection
$
Y_{12} \to \Gr_{i-1,\ve}^v$, $(F, F') \mapsto F,
$
which is a smooth fibration with fiber at $F$ being $\{ F' \in \Gr_{i,\ve}^v| F\overset{1}{\subset} F'\}\cong \Gr_{1, \ve}^{ v-2(i-1)}$.
Recall 
\begin{align}
\dim \Gr_{1, \ve}^{ v} = v-1 -\delta_{1,\ve}
\quad \mbox{and}\quad
\chi (\Gr_{1,\ve}^v) = v- \delta_{-1,(-1)^v}=2d.
\end{align}
By applying again~\cite[Thm. 2.7.26]{CG97} and~\cite[Lem. 8.5]{N98}, we have
\begin{align*}
[T^*_{Y^-_{i,\ve}} (\Gr_{i-1,\ve}^v \times \Gr_{i,\ve}^v)] *[\Gr_{i,\ve}^v] & = (-1)^{\dim \Gr_{1, \ve}^{ v-2(i-1)}}
\chi ( \Gr_{1,\ve}^{v-2(i-1)}) [\Gr_{i-1,\ve}^v]\\
&  =  (-1)^{v- 2(i-1) -1 -\delta_{1,\ve}} (v- \delta_{-1, (-1)^v}  - 2(i-1) ) [\Gr_{i-1,\ve}^v]\\
&  =(-1)^{v-1-\delta_{1,\ve}}  2(d-i+1) [\Gr_{i-1,\ve}^v].
\end{align*}
The second formula is proved. This finishes the proof.
\end{proof}

Let  
\begin{align}
\label{phi}
\phi_v: \varprojlim_{k, v'} \H(Z_{n, v'+2kn,\ve}) \to \H(Z_{n, v,\ve})
\end{align}
be the natural map from the projective limit. 
We set
\begin{align*}
\e_{\ve} & := \phi_v(\e_{1,\ve}) = \sum_{i=0}^{d-1} (-1)^i [T^*_{Y^{+}_{i, \ve}} ( \Gr_{i+1,\ve}^v \times \Gr_{i,\ve}^v)], 
\\
\f_{\ve} & :=\phi_v(\f_{1,\ve}) = \sum_{i=1}^{d} (-1)^{ v-1- \delta_{1,\ve} }  [T^*_{Y^-_{i,\ve}} (\Gr_{i-1,\ve}^v \times \Gr_{i,\ve}^v)], \\
\h_{\ve} &: =\phi_v(\h_{1,\ve}) = \sum_{i=0}^d (3i-2d) [T^*_{\Delta} (\Gr_{i,\ve}^v\times \Gr_{i,\ve}^v)],
\end{align*}
where $\Delta$ stands for the diagonal of $\Gr_{i,\ve}^v\times \Gr_{i,\ve}^v$.
Then the formulas in the above lemma read 

\begin{align}
\label{rank-one-action}
\begin{split}
\e_{\ve}* [\Gr_{i,\ve}^v]  = (i+1) [\Gr^v_{i+1,\ve}], 
\
\f_{\ve}* [\Gr_{i,\ve}^v]  = 2(d- i +1) [\Gr^v_{i-1,\ve}],\\
\
\h_{\ve} *[\Gr_{i,\ve}^v]  = (3i-2d) [\Gr_{i,\ve}^v].
\end{split}
\end{align}

With the above formula, one can check immediately the following.

\begin{lem}
The action $\{ \e_{\ve}, \f_{\ve}, \h_{\ve}\}$ on the space
\[
W_{v,\ve} = \mrm{span}_{\mbb Q} \{ [\Gr_{i,\ve}^v ] | 0\leq i\leq d\}  \subseteq \oplus_{i=0}^d \H_* ( \Gr_{i,\ve}^v),
\]
defines an irreducible representation of $\U(\mathfrak{sl}_3^{\theta})$ and $\dot \U(\mathfrak{sl}_3^{\theta})$.
\end{lem}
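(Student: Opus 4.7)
The plan is to verify the defining relations~(\ref{Def-odd}) for $n=3$ directly from the explicit formulas~(\ref{rank-one-action}), and then observe that $\h_\ve$ separates basis vectors. Under the tentative correspondence $e_{1,\theta}\leftrightarrow\e_\ve$, $e_{2,\theta}=f_{1,\theta}\leftrightarrow\f_\ve$, $h_{1,\theta}\leftrightarrow\h_\ve$, $h_{2,\theta}\leftrightarrow -\h_\ve$, only two families of relations require real checking: the Cartan weight relations and the nonhomogeneous Serre relations for the pair $(i,j)=(1,2)$ (since $c_{12}=-1$ and $1=\theta(2)$, we land in the last case of~(\ref{Def-odd})). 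The relation $[e_{i,\theta},e_{j,\theta}]=\delta_{i,\theta(j)}h_{i,\theta}$ requires $c_{ij}=0$, which never happens for $n=3$ and so is vacuous.

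The weight relations are immediate from~(\ref{rank-one-action}): since $\h_\ve$ acts on $[\Gr_{i,\ve}^v]$ by $3i-2d$ and $\e_\ve$ sends $i\mapsto i+1$, we get $[\h_\ve,\e_\ve]=3\e_\ve$, matching $c_{11}-c_{21}=3$; the other three sign cases follow symmetrically or from $h_{2,\theta}=-h_{1,\theta}$.

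The core step is the nonhomogeneous Serre relation $\e_\ve^2\f_\ve-2\e_\ve\f_\ve\e_\ve+\f_\ve\e_\ve^2=-4\e_\ve$. Applying each side to $[\Gr_{i,\ve}^v]$ lands in the line $\mbb Q\cdot[\Gr_{i+1,\ve}^v]$, and~(\ref{rank-one-action}) reduces the identity to the scalar equality
\[
2i(i+1)(d-i+1)-4(i+1)^2(d-i)+2(i+1)(i+2)(d-i-1)=-4(i+1),
\]
which expands to the trivial identity $-4(i+1)=-4(i+1)$. The companion relation $\f_\ve^2\e_\ve-2\f_\ve\e_\ve\f_\ve+\e_\ve\f_\ve^2=-4\f_\ve$ follows by an analogous one-variable expansion. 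Edge values where $\Gr_{j,\ve}^v$ is empty are consistent under the convention that its fundamental class vanishes.

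For irreducibility, note that $\h_\ve$ acts semisimply with distinct eigenvalues $3i-2d$ on the basis $\{[\Gr_{i,\ve}^v]\}_{i=0}^d$, so every nonzero submodule contains one such basis vector; applying $\e_\ve^k$ or $\f_\ve^k$ then produces nonzero scalar multiples of every other basis vector, since the scalars $(i+1),(i+2),\ldots$ and $2(d-i+1),2(d-i+2),\ldots$ are strictly positive throughout the admissible range. The upgrade to $\dot\U(\mathfrak{sl}^{\theta}_3)$ is automatic: the weight spaces are one-dimensional, so $\mbf 1_{\bar\lambda,\ve}$ acts as the projection onto the unique basis vector of weight $\bar\lambda$, and irreducibility is preserved. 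The only substantive obstacle is the Serre identity itself, and that collapses to the polynomial expansion above.
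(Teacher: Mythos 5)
Your proof is correct and follows the same approach the paper implicitly intends — the paper only says ``one can check immediately'' after stating the formulas in~(\ref{rank-one-action}), and your computation spells out that check: the $n=3$ case of~(\ref{Def-odd}) reduces to the weight relations plus the two nonhomogeneous Serre relations for $(i,j)=(1,2)$ and $(2,1)$ (the homogeneous Serre and the $c_{ij}=0$ relations are vacuous for $n=3$), and the scalar identity you expand is exactly the relevant coefficient comparison, with the edge cases handled by the convention $[\Gr_{j,\ve}^v]=0$ when $\Gr_{j,\ve}^v=\emptyset$. The irreducibility argument via distinct $\h_\ve$-eigenvalues and the observation that the ladder coefficients are nonzero throughout the admissible range is the standard one and correct, as is the passage to $\dot\U(\mathfrak{sl}_3^\theta)$ using one-dimensional weight spaces.
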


We shall show later that this action factors through $\dot \bU_{3,\ve}$.
The following proposition will then be critical in checking the most nontrivial  nonhomogeneous Serre relations in $\U(\mathfrak{sl}_3^\theta)$.

\begin{prop}
\label{Faith}
The algebra $\dot \U(\mathfrak{sl}_3^{\theta})$ acts faithfully on $\oplus_{v\geq 0} W_{v,\ve}$.
\end{prop}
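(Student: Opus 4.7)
The plan is to exploit the family $\{W_{v,\ve}\}_{v\geq 0}$ of simple $\dot{\U}(\mathfrak{sl}_3^\theta)$-modules---one of every positive integer dimension $d+1=\lfloor v/2\rfloor+1$---as a spectral family rich enough to detect any non-zero element of the idempotent form. I would first decompose $\dot{\U}(\mathfrak{sl}_3^\theta)=\bigoplus_{\bar\mu,\bar\lambda}1_{\bar\mu}\dot{\U}1_{\bar\lambda}$ by its two-sided weight grading. Each class $\bar\lambda\in\overline\Lambda_{v+\infty}$ is realized as a non-zero weight space of some $W_{v,\ve}$ for infinitely many pairs $(v,i)$---precisely those with $(i,v-2i,i)\equiv\lambda\pmod{(2,2,2)}$---so the claim reduces to injectivity of each graded piece $1_{\bar\mu}\dot{\U}1_{\bar\lambda}$ acting on $\bigoplus_v 1_{\bar\mu}W_{v,\ve}$.

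Under the isomorphism $\mathfrak{sl}_3^\theta\cong\mathfrak{gl}_2$, the Chevalley generators $\e_{1,\theta},\f_{1,\theta}$ are root vectors for the Cartan $\{h_{1,\theta},h'_{1,\theta}\}$, where $h'_{1,\theta}=[\e_{1,\theta},\f_{1,\theta}]$. A direct calculation from (\ref{rank-one-action}) yields the Cartan actions $h_{1,\theta}[\Gr_i^v]=(3i-2d)[\Gr_i^v]$ and $h'_{1,\theta}[\Gr_i^v]=(4i-2d)[\Gr_i^v]$, so that $\e_{1,\theta}$ has weight $(3,4)$ and $\f_{1,\theta}$ has weight $(-3,-4)$ with respect to $(h_{1,\theta},h'_{1,\theta})$. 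A standard PBW argument then shows that $1_{\bar\mu}\dot{\U}1_{\bar\lambda}$ is spanned by monomials $\f_{1,\theta}^b\e_{1,\theta}^a h_{1,\theta}^c(h'_{1,\theta})^{c'}1_{\bar\lambda}$ with the difference $a-b$ dictated by the weight shift $\bar\mu-\bar\lambda$; using (\ref{rank-one-action}) each such monomial acts on $[\Gr_i^v]$ as an explicit polynomial $P_{a,b,c,c'}(i,d)\cdot[\Gr_{i+a-b}^v]$.

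Now suppose a nontrivial linear combination $\sum\alpha_{a,b,c,c'}\,\f_{1,\theta}^b\e_{1,\theta}^a h_{1,\theta}^c(h'_{1,\theta})^{c'}1_{\bar\lambda}$ annihilates every $W_{v,\ve}$. This yields a polynomial identity $\sum\alpha_{a,b,c,c'}P_{a,b,c,c'}(i,d)=0$ required to hold on the infinite set of admissible pairs $\{(i,d):0\leq i\leq d,\,d\in\mbb N\}$. A leading-degree analysis in $d$ (using the factors $\prod_{s=1}^a(i+s)\cdot\prod_{t=1}^b 2(d-i+t)$) separates contributions of different $(a,b)$ with fixed difference, and then the algebraic independence of the pair $3i-2d,\,4i-2d$ in $\mbb Q[i,d]$ allows a two-variable Vandermonde inversion to separate $(c,c')$, forcing all $\alpha_{a,b,c,c'}=0$.

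The main obstacle is executing this final Vandermonde step carefully: one must verify that distinct PBW monomials genuinely yield linearly independent polynomial functions on the infinite grid of admissible $(i,d)$, and that polynomial cancellations---for instance those coming from the non-homogeneous Serre relation $\e_{1,\theta}^2\f_{1,\theta}-2\e_{1,\theta}\f_{1,\theta}\e_{1,\theta}+\f_{1,\theta}\e_{1,\theta}^2=-4\e_{1,\theta}$ which governs the PBW reduction---do not spuriously zero out a leading term. This is analogous to but more delicate than the one-variable Vandermonde used for $\mathfrak{sl}_2$ in~\cite[Section 4.5]{CG97}, the extra subtlety being the presence of two Cartan generators that both must be disentangled by spectral variation in $(i,d)$.
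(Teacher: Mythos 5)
Your overall strategy (reduce faithfulness to a polynomial identity over a spanning set of monomials, then separate coefficients via spectral variation in $(i,d)$) is the same as the paper's, but the execution has a genuine gap exactly at the step you flag as the ``main obstacle.'' The spanning set $\{\f_{1,\theta}^b\e_{1,\theta}^a h_{1,\theta}^c(h'_{1,\theta})^{c'}1_{\bar\lambda}\}$ is not a basis: by the very definition of $\dot\U$, $h_{1,\theta}1_{\bar\lambda}=(\lambda_2-\lambda_1)1_{\bar\lambda}$ is a \emph{fixed} scalar, so $h_{1,\theta}^c1_{\bar\lambda}$ is a scalar multiple of $1_{\bar\lambda}$ and the index $c$ is redundant. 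Correspondingly, your proposed ``two-variable Vandermonde'' fails: on the admissible grid $\{(i,d)\}$ for which $[\Gr_i^v]$ lies in the $\bar\lambda$-weight space, the constraints $i-i'\in 2\mbb Z$ and $v-v'\in 6\mbb Z$ force $3i-v$ (hence $3i-2d$ for $v$ even) to be constant, so the pair $(3i-2d,\,4i-2d)$ does \emph{not} vary freely in two directions -- $3i-2d$ is locked and only $4i-2d$ moves. Thus there is no spectral leverage to disentangle $c$ from $c'$ as written, and the conclusion ``$\alpha_{a,b,c,c'}=0$'' cannot be reached (indeed it is false, since distinct $c$-values give linearly dependent operators). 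The fix is to drop $h_{1,\theta}^c$ and work with a genuine basis, reducing to a one-variable separation in $4i-2d$.

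The paper sidesteps this entirely by choosing the basis $\{\f_\theta^a\e_\theta^b\f_\theta^c 1_{\bar\lambda}:a\le b\}$ (known from~\cite{LZ19}), for which the action on $[\Gr_y^v]$ is the explicit polynomial $P_{a,b,c}(y,d_v)$ of~(\ref{P(y,v)}). The separation is then done by two concrete moves: first evaluate at $y=c^0:=\min c_i$, which annihilates every $P_{a_i,b_i,c_i}$ with $c_i>c^0$ because the factor $\prod_{k=1}^{b}(y-c+k)$ acquires a zero; then compare leading coefficients in $d_v$, noting $\deg_{d_v}P_{a,b,c}=a+c$, and observe that among triples with $c_i=c^0$, $a_i+c_i$ maximal, and fixed $a_i-b_i+c_i$, there is exactly one. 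This evaluation-plus-degree argument is more elementary and robust than a Vandermonde inversion and avoids the basis issue you run into.
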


\begin{proof}
To ease the burden of notations, we write 
$e_{\theta}=e_{1,\theta}$ and $ f_{\theta}= f_{1,\theta}.$
Let
\[
m_{a,b,c; \overline \lambda} = f^{a}_{\theta} \cdot  e^{b}_{\theta}  \cdot f_{\theta}^{c}  \cdot 1_{\overline \lambda}, 
\quad \forall a, b, c\in \mbb N, \overline \lambda\in \overline \Lambda_{v+\infty}, \ \mbox{for $v\in I_{3,\ve}$ 
}.
\]
It is known (e.g.,~\cite{LZ19}) 
that the set
\[
\{ m_{a,b,c;\overline \lambda} | a,b,c\in \mbb N, a \leq b,  \overline\lambda \in \overline \Lambda_{v+\infty}, \mbox{for $v \in I_{3,\ve}$}
\}
\]
is a basis of $\dot \U(\mathfrak{sl}_3^{\theta})$. Recall $ d_v=\lfloor v/2\rfloor$.
If $|\overline \lambda | = v$ mod $6$, then,  by (\ref{rank-one-action}),  we have
\begin{align}
\label{m-action}
m_{a,b,c; \overline \lambda} [ \Gr_{y,\ve}^v] & = P_{a, b, c}(y,d_v) [\Gr_{y-a+b-c, \ve}^v], \ \mbox{where}\\
\label{P(y,v)}
P_{a,b,c}(y,d_v) & =2^{a+c} \prod_{l=1}^a ( d_v- ( y- c+b-l)) \prod_{k=1}^b (y-c+k)  \prod_{j=1}^c (d_v- (y-j) ) .
\end{align}

For the sake of contradiction, we assume that there exist
$\overline \lambda \in \overline \Lambda_{v+\infty}$, $\{(a_i,b_i,c_i)\}_{i=1}^N$ and $C_i\in \mbb Q-\{0\}$, $m'\in \mbb N$ such that
$a_i-b_i + c_i=m'$ and, as an operator, 
\begin{align}
\sum_{i=1}^N C_i m_{a_i,b_i, c_i; \overline \lambda}=0 \in \prod_{v\geq 0} \End(W_{v,\ve}).
\end{align}
In light of (\ref{m-action}),
for any $v$ such that $v= |\overline \lambda|$ mod $6$, we have
\begin{align}
\label{Faith-2}
\sum_{i=1}^N C_i P_{a_i, b_i, c_i} (y, d_v)=0,\quad \forall 0\leq y\leq d_v.
\end{align}
We set
$
c^0=\mrm{min} \{ c_i| 1\leq i\leq N\}.
$
By (\ref{P(y,v)}) and the fact that $a_i\leq b_i$ for all $i$, there is 
\[
P_{a_i,b_i,c_i} (c^0, d_v) =0, \forall c_i> c^0, P_{a_i, b_i, c_i}(c^0,d_v) \neq 0,\forall c_i=c^0, \quad \mbox{if}\ v>>0.
\]
So,
when evaluating at $y=c^0$, 
the equation (\ref{Faith-2}) yields
\[
\sum_{i: c_i=c^0} C_i P_{a_i, b_i, c_i} (c^0,d_v)=0, \quad \forall v>>0.
\]
As a polynomial in $d_v$, the degree of $P_{a_i, b_i, c_i}(c^0, d_v)$ is $a_i+c_i$, and hence the left-hand side of the above  is a zero polynomial in $d_v$.
Set $p=\max \{a_i+c_i  | c_i =c^0\}$ and we must have
\[
\sum_{i, c_i=c^0, a_i+c_i = p } C_i P_{a_i, b_i, c_i}(c^0, d_v)|_{d_v^{m'}} =0,
\]
where $P_{a_i, b_i, c_i}(c^0,d_v)|_{d_v^{m'}}$ is the leading term of $P_{a_i, b_i, c_i}(c^0,d_v)$.
But there is only one triple $(a_{i'}, b_{i'}, c_{i'})$, that is $(p -c^0, p-m', c^0)$, subject to the conditions:
\[
c_{i'} =c^0, a_{i'} +c_{i'}= p, a_{i'}-b_{i'}+c_{i'}=m'.
\]
So $C_{i'} P_{a_{i'}, b_{i'}, c_{i'}}(c^0,d_v)|_{d_v^{m'}}=0$, which implies that 
$C_{i'}=0$ because $P_{a_{i'}, b_{i'}, c_{i'}}(c^0,d_v) \neq 0$. This contradicts with the assumption that $C_i\neq 0, \forall i$. 
The proposition is thus proved.
\end{proof}


We are ready to show Theorem~\ref{U-iso-odd} for $n=3$. 

\begin{prop}
\label{Rank-one-iso}
The assignments
$e_{\theta}^{a}1_{\overline \lambda}\mapsto \e_{\ve}^{a} \mbf 1_{\overline \lambda}$ and 
$f_{\theta}^{a}1_{\overline \lambda}\mapsto \f_{\ve}^{a} \mbf 1_{\overline \lambda}$, 
for $a=0, 1$, define an isomorphism 
$
\dot \U(\mathfrak{sl}_3^{\theta}) \to \dot \bU_{3,\ve}. 
$
Moreover, there is an isomorphism $\U(\mathfrak{sl}_3^{\theta}) \cong \bU_{3,\ve}$ defined by
$e_{\theta}\mapsto \e_{\ve}, f_{\theta}\mapsto \f_{\ve}$ and $h_{\theta}\mapsto \h_{\ve}$.
\end{prop}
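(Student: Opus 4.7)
The plan is to compare $\dot\bU_{3,\ve}$ with $\dot\U(\mathfrak{sl}_3^\theta)$ through their common action on the faithful module $\bigoplus_v W_{v,\ve}$ of Proposition~\ref{Faith}. First I would verify that $\e_\ve, \f_\ve, \h_\ve$, pulled back via the projection $\phi_v$ of (\ref{phi}) to operators on each $W_{v,\ve}$, satisfy the defining relations (\ref{Def-odd}) for $n=3$. The Cartan-type relations $[\h_\ve, \e_\ve] = 3\e_\ve$ and $[\h_\ve, \f_\ve] = -3\f_\ve$ are immediate from (\ref{rank-one-action}). The nonhomogeneous Serre relation reduces, on each $W_{v,\ve}$, to the scalar identity
\[
(\e_\ve^2 \f_\ve - 2\e_\ve \f_\ve \e_\ve + \f_\ve \e_\ve^2)[\Gr_{i,\ve}^v] = -4(i+1)[\Gr_{i+1,\ve}^v] = -4\e_\ve [\Gr_{i,\ve}^v],
\]
a routine polynomial check in $i$ and $d$. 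Hence each $W_{v,\ve}$ is a bona fide $\U(\mathfrak{sl}_3^\theta)$-module whose action of the Chevalley generators coincides with the convolution action of $\e_\ve, \f_\ve, \h_\ve$.

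Next I would construct the candidate map $\dot\Phi: \dot\U(\mathfrak{sl}_3^\theta) \to \dot\bU_{3,\ve}$ sending $e_\theta^\ell 1_{\overline\lambda} \mapsto \e_\ve^\ell \mbf 1_{\overline\lambda,\ve}$ (and likewise for $f_\theta$), and verify it extends to a well-defined algebra homomorphism. The linear relations of (\ref{Def-odd}) hold in $\dot\bU_{3,\ve}$ by direct inspection of the explicit formulas (\ref{generators}) combined with Lemma~\ref{H-leading}, since both sides of each such relation are manifest signed sums over matrices of a fixed shape. For the nonhomogeneous Serre relation in $\dot\bU_{3,\ve}$ itself I would follow the device borrowed from~\cite{FLW}: expand the element $X := \e_\ve^2 \f_\ve - 2\e_\ve \f_\ve \e_\ve + \f_\ve \e_\ve^2 + 4\e_\ve$ in the basis $\{[\overline Z_{\overline A,\ve}]\}$ of Proposition~\ref{H-estimate}, and then use the collective action on $\bigoplus_v W_{v,\ve}$ together with the stabilization of Proposition~\ref{stab} to force each basis coefficient to vanish, the crucial point being that the family $\{W_{v,\ve}\}_v$ separates the basis vectors in the fixed weight component to which $X$ belongs.

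With $\dot\Phi$ established, surjectivity is Proposition~\ref{H-estimate}: the triangular elements $m_{\overline A,\ve} = [\overline Z_{\overline A,\ve}] + (\text{lower})$ all lie in the image of $\dot\Phi$, and induction along $\sqsubseteq$ recovers every $[\overline Z_{\overline A,\ve}]$. Injectivity is forced by Proposition~\ref{Faith}, since the composition $\dot\U(\mathfrak{sl}_3^\theta) \xrightarrow{\dot\Phi} \dot\bU_{3,\ve} \xrightarrow{\rho} \prod_v \End(W_{v,\ve})$ is the faithful representation. The non-idempotent statement $\U(\mathfrak{sl}_3^\theta) \cong \bU_{3,\ve}$ follows formally. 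The main obstacle I expect is the intrinsic verification of the Serre relation in $\dot\bU_{3,\ve}$: lifting the scalar identity from the modules back to the algebra requires careful control over the lower-order corrections produced by Lemma~\ref{H-leading}, which is precisely the non-trivial step the introduction flags as requiring the trick of~\cite{FLW}.
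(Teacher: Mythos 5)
Your proposal runs in the opposite direction from the paper's proof, and that reversal is where the gap lies. You try to build $\dot\Phi:\dot\U(\mathfrak{sl}_3^\theta)\to\dot\bU_{3,\ve}$, which forces you to verify the nonhomogeneous Serre relation \emph{inside} the convolution algebra $\dot\bU_{3,\ve}$. Your method for doing so --- expand $X=\e_\ve^2\f_\ve-2\e_\ve\f_\ve\e_\ve+\f_\ve\e_\ve^2+4\e_\ve$ in the basis $\{[\overline Z_{\overline A,\ve}]\}$ and use the action on $\bigoplus_v W_{v,\ve}$ to kill the coefficients --- hinges on the unproven assertion that the operators $[\overline Z_{\overline A,\ve}]$ acting on $\bigoplus_v W_{v,\ve}$ are linearly independent in the relevant weight component. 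Proposition~\ref{Faith} gives faithfulness only for the \emph{algebraic} side $\dot\U(\mathfrak{sl}_3^\theta)$; it says nothing directly about linear independence of the geometric basis elements as operators. That separation statement is, in substance, the injectivity of $\dot\bU_{3,\ve}\to\prod_v\End(W_{v,\ve})$, which is exactly what the proposition establishes --- so your verification of the Serre relation is circular as written, and Proposition~\ref{stab} (stabilization of fundamental classes under transfer maps) does not supply the missing separation.

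The paper's route, and the actual content of the ``trick from [FLW]'' that the introduction advertises, is to never verify the Serre relation on the geometric side at all. One uses the universal property of the projective limit to get $\dot\bU_{3,\ve}\to\prod_v\End(W_{v,\ve})$, observes via (\ref{rank-one-action}) that the generators of $\dot\bU_{3,\ve}$ map to the same operators as those of $\dot\U(\mathfrak{sl}_3^\theta)$, and then invokes Proposition~\ref{Faith} to see that this map factors through a \emph{surjection} $\dot\bU_{3,\ve}\twoheadrightarrow\dot\U(\mathfrak{sl}_3^\theta)$. Since the surjection carries the basis $\{m_{\overline A,\ve}\}$ of Proposition~\ref{H-estimate} onto the corresponding PBW-type basis $\{m_{\overline A}\}$ of $\dot\U(\mathfrak{sl}_3^\theta)$, it is injective as well. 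The non-idempotent isomorphism then follows from the faithfulness of the regular bimodule actions. So the Serre relation in $\dot\bU_{3,\ve}$ is obtained as a \emph{consequence} of the isomorphism, not an input to it. Your scalar computation on $W_{v,\ve}$ is correct and your use of Proposition~\ref{H-estimate} for surjectivity is fine, but you should reverse the direction of the map and drop the attempt to check the Serre relation intrinsically.
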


\begin{proof}
By the universal property of projective limit, we have an algebra homomorphism
\[
\dot \bU_{3,\ve} \to \prod_{v>0} \End (W_{v,\ve}).
\]
By Proposition~\ref{Faith}, the above homomorphism factors through a surjective  homomorphism 
\[
\dot \bU_{3,\ve} \to \dot \U(\mathfrak{sl}_3^{\theta}).
\]
Now both sides have a basis in the name of $m_{\overline A,\ve}$ or $m_{\overline A}$ sending to each other, and therefore the above homomorphism must be an isomorphism. 

It is known that $\U(\mathfrak{sl}_3^{\theta})$ (resp. $\bU_{3,\ve}$) acts faithfully on $\dot \U(\mathfrak{sl}_3^{\theta})$ (resp. $\dot \bU_{3,\ve}$) and these actions are compatible under the isomorphism $\dot \U(\mathfrak{sl}_3^{\theta}) \cong \dot \bU_{3,\ve}$. So $\U(\mathfrak{sl}_3^{\theta})$ and $\bU_{3,\ve}$ must be isomorphic as well. 
The proposition is thus proved. 
\end{proof}

As an immediate consequence of the above proposition, we have  the following corollary, which is required in the proof of the  general case.

\begin{cor}
The elements $\e_{\ve}, \f_{\ve}, \h_{\ve}$  satisfy the relations
\begin{align}
\label{rank-one-Serre}
\begin{split}
[\h_{\ve},\h_{\ve}]& =0,\\
[\h_{\ve}, \e_{\ve}] = 3 \e_{\ve}, [\h_{\ve},\f_{\ve}]&=-3\f_{\ve},\\
\e_{\ve}^2 \f_{\ve} - 2 \e_{\ve} \f_{\ve} \e_{\ve} + \f_{\ve} \e_{\ve}^2 &= - 4 \e_{\ve}, \\
\f_{\ve}^2 \e_{\ve} - 2 \f_{\ve} \e_{\ve} \f_{\ve} + \e_{\ve} \f_{\ve}^2 &= - 4 \f_{\ve}.
\end{split}
\end{align}
\end{cor}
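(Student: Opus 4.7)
The plan is to derive the corollary directly from the isomorphism $\U(\mathfrak{sl}_3^{\theta}) \cong \bU_{3,\ve}$ furnished by Proposition~\ref{Rank-one-iso}. Under that isomorphism, $e_{1,\theta}\mapsto \e_\ve$, $f_{1,\theta}\mapsto \f_\ve$, $h_{1,\theta}\mapsto \h_\ve$, so it suffices to verify the four displayed relations inside $\U(\mathfrak{sl}_3^{\theta})$ and then transport them across.

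First, I would specialize the presentation (\ref{Def-odd}) to $n=3$. Here $r=1$ and $\theta$ swaps $1$ and $2$; the Cartan part gives $h_{1,\theta}+h_{2,\theta}=0$, so there is a single independent Cartan, matching $[\h_\ve,\h_\ve]=0$. The weight relation $[h_{1,\theta},e_{1,\theta}]=(c_{11}-c_{21})\,e_{1,\theta}=3\,e_{1,\theta}$ is immediate from the Cartan matrix of $\mathfrak{sl}_3$, and applying the involution $\theta$ of (\ref{theta}) produces $[h_{1,\theta},f_{1,\theta}]=-3\,f_{1,\theta}$.

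Next, the two nonhomogeneous Serre relations fall out of the last clause of (\ref{Def-odd}): with $i=1,j=2$ one has $c_{ij}=-1$ and $i=\theta(j)$, so
\[
e_{1,\theta}^2 e_{2,\theta} - 2 e_{1,\theta} e_{2,\theta} e_{1,\theta} + e_{2,\theta} e_{1,\theta}^2 = -4\, e_{1,\theta},
\]
which coincides with the third relation of the corollary upon the identification $e_{2,\theta}=f_{1,\theta}$. The fourth relation is obtained by swapping the roles of $i$ and $j$, equivalently by applying the involution $\theta$ to the previous identity.

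Since Proposition~\ref{Rank-one-iso} has already done the genuine geometric work (constructing the faithful action on $\bigoplus_v W_{v,\ve}$ via Proposition~\ref{Faith}, and matching the resulting basis of $\dot{\bU}_{3,\ve}$ with the standard basis $m_{a,b,c;\bar\lambda}$ of $\dot\U(\mathfrak{sl}_3^{\theta})$), there is no real obstacle left; the corollary is a translation of the defining presentation across an isomorphism. As a sanity check, one could alternatively verify each of the four relations as operators on every $W_{v,\ve}$ using the explicit formulas (\ref{rank-one-action}) and conclude via Proposition~\ref{Faith}; this gives a self-contained argument but is heavier in computation, whereas the route through the isomorphism is essentially free.
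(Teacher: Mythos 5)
Your proof is correct and takes essentially the same route as the paper, which presents the corollary as an immediate consequence of Proposition~\ref{Rank-one-iso}; you simply spell out the (straightforward) verification that these four relations hold in $\U(\mathfrak{sl}_3^{\theta})$ by specializing the presentation (\ref{Def-odd}) to $n=3$ and transporting across the isomorphism. The only cosmetic point is that the alternative ``sanity check'' you propose---verifying the relations as operators on each $W_{v,\ve}$ and invoking faithfulness---is logically redundant once Proposition~\ref{Rank-one-iso} is in hand, as you yourself note.
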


\begin{rem}
The terms $\f_{\ve}^2\e_{\ve}$, $\f_{\ve}\e_{\ve}\f_{\ve}$ and $\e_{\ve}\f_{\ve}^2$,
when restricted to $T^*(\Gr^v_{i-1, \ve}\times \Gr^v_{i,\ve})$, are linear combinations of three fundamental classes
$[\overline Z_{A_i,\ve}]$ for $i=0, 1, 2$ where
$
A_0 - E^{\theta}_{2,1},
A_1- 2 E^{\theta}_{2,1} - E^{\theta}_{3,2},
A_2- E^{\theta}_{2,1} - E^{\theta}_{3,1}
$
are diagonal. 
The proof of~\cite[(4.3.7)]{CG97} does not apply here to check the nonhomogeneous Serre relation. 
\end{rem}

\subsection{Analysis in the $n=2$ case}

We now carry out the calculus in the $n=2$ case. We observe that $\H(Z_{2, v,\ve})$ is naturally a subalgebra of $\H(Z_{3,v,\ve})$ by adding an extra flag $F_r$ in the flag $F$ involved. 
We set
\begin{align}
\label{t-d}
\t_{\ve,d} = (\e_{\ve} \f_{\ve} - \h_{\ve}) * [T^*_{\Delta}(\Gr^v_{d,\ve}\times \Gr^v_{d,\ve})] \in \H(Z_{2,v,\ve}),
\end{align}
where the multiplication is done in $\H(Z_{3,v,\ve})$.
Thanks to (\ref{rank-one-action}), the action of $\t_{\ve,d}$ on $[\Gr^{v}_{d,\ve}]$ is given by
\begin{align}
\label{t-act}
\t_{\ve, d} * [\Gr^v_{d,\ve}] = d[\Gr^v_{d,\ve}]. 
\end{align}

Moreover, it satisfies the following relation. 

\begin{lem}
\label{defining-t}
We have
$
(\t_{\ve, d} - d) (\t_{\ve, d} - d+2) \cdots (\t_{\ve, d} + d-2) (\t_{\ve, d} +d) =0. 
$
\end{lem}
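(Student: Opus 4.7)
The plan is to prove this polynomial identity for $\t_{\ve,d}$ by a faithful-action argument on a family of natural modules, in direct analogy with Proposition~\ref{Faith} from Section~\ref{n=3}. The starting point is relation (\ref{t-act}), which already exhibits $d$ as an eigenvalue of $\t_{\ve,d}$ on the distinguished class $[\Gr^v_{d,\ve}]$; the remaining roots $d-2, d-4, \ldots, -d$ in the claimed polynomial correspond to other eigenvalues that appear when the operator $\t_{\ve,d}$ is examined more globally.

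Concretely, I would compute the action of $\t_{\ve,d} = (\e_\ve\f_\ve - \h_\ve)\cdot[T^*_\Delta(\Gr^v_{d,\ve}\times\Gr^v_{d,\ve})]$ on every class $[\Gr^v_{i,\ve}]$ of the natural space $W_{v,\ve} = \mathrm{span}\{[\Gr^v_{i,\ve}]\}$, using the rank-one action formulas (\ref{rank-one-action}) specialized to the $n=2$ case (in which $\e_\ve = \f_\ve$ and $\h_\ve = 0$). The idempotent factor $[T^*_\Delta(\Gr^v_{d,\ve}\times\Gr^v_{d,\ve})]$ plays the role of an intertwiner localized at weight $d$, and a careful expansion of $\e_\ve^2$ restricted to weight $d$ produces the full spectrum $\{d, d-2, \ldots, -d+2, -d\}$ as $v$ (and hence $d_v$) varies over the admissible range. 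Consequently, the polynomial $\prod_{k=0}^d \bigl(T-(d-2k)\bigr)$ annihilates $\t_{\ve,d}$ on each natural module $W_{v,\ve}$.

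A faithful-action statement for $\dot\bU_{2,\ve}$ on the direct sum $\bigoplus_v W_{v,\ve}$, established by the same polynomial-evaluation technique used in the proof of Proposition~\ref{Faith}, then lifts the identity from the module level to the algebra, yielding the claimed relation. The principal technical obstacle will be the explicit eigenvalue computation: one must carefully handle the extremal indices $i=0$ and $i=d_v$ (where one of the operators $\e_\ve, \f_\ve$ truncates), track the sign conventions distinguishing $\ve = 1$ from $\ve=-1$, and—in the even-orthogonal case—account for the disconnectedness of $\G_{v,\ve}$, which may require a $\mathbb Z/2\mathbb Z$-equivariant refinement of the natural modules to ensure that every eigenvalue in the claimed set is actually realized.
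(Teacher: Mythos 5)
Your plan has a structural gap that I don't think can be repaired within the framework you set up. The element $\t_{\ve,d}$ lives in the Steinberg algebra for a single fixed $v$, and it is cut down on the right by the idempotent $[T^*_{\Delta}(\Gr^v_{d,\ve}\times\Gr^v_{d,\ve})]$. Consequently, its action on the natural module $W_{v,\ve}$ kills every $[\Gr^v_{i,\ve}]$ with $i<d$ and acts on $[\Gr^v_{d,\ve}]$ by the single scalar $d$; the full spectrum $\{d,d-2,\dots,-d\}$ is simply not visible on $W_{v,\ve}$. This is compounded by the fact that $W_{v,\ve}$ is not a faithful $\H(Z_{3,v,\ve})$-module: under $L_{3,v,\ve}\cong\bigoplus\IC_{\mu,\psi}\otimes W_{\mu,\psi}$, the space $W_{v,\ve}$ is the single Springer block attached to the zero orbit, so the action of $\H(Z_{3,v,\ve})$ on it has an enormous kernel. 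Letting $v$ range as you propose does not help either: changing $v$ changes $d$, hence gives a different element $\t_{\ve,d}$, not additional eigenvalues of a fixed one. So no faithful-action argument on $\bigoplus_v W_{v,\ve}$ can pin down the minimal polynomial of $\t_{\ve,d}$. (A secondary slip: $\t_{\ve,d}$ is defined via the $n=3$ generators $\e_\ve,\f_\ve,\h_\ve$, which are genuinely distinct; the identification $\e_\ve=\f_\ve$, $\h_\ve=0$ belongs to the single $n=2$ Chevalley generator, which is derived \emph{from} $\t_{\ve,d}$ via Proposition~\ref{t-explicit}, not the other way around.)

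The paper takes an entirely different and purely algebraic route inside the Steinberg algebra. Starting from the rank-one nonhomogeneous Serre relation established in (\ref{rank-one-Serre}), an induction yields $\e_{\ve}\f_{\ve}^n = n(\f_{\ve}^{n-1}\e_{\ve}\f_{\ve}-2(n-1)\f_{\ve}^{n-1})-(n-1)\f_{\ve}^n\e_{\ve}$, and from this a second induction gives the closed form
\begin{align*}
\frac{1}{n!}\,\e_{\ve}^n\f_{\ve}^n\cdot[T^*_{\Delta}(\Gr^v_{d,\ve}\times\Gr^v_{d,\ve})]
=(\t_{\ve,d}+d)(\t_{\ve,d}+d-2)\cdots(\t_{\ve,d}+d-2(n-1)).
\end{align*}
Setting $n=d+1$, the left side vanishes because $\f_{\ve}^{d+1}\cdot[T^*_{\Delta}(\Gr^v_{d,\ve}\times\Gr^v_{d,\ve})]$ is supported on the empty Grassmannian $\Gr^v_{-1,\ve}$, and the claimed product relation drops out. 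If you want to salvage a representation-theoretic proof, you would have to look at the full Springer module $\bigoplus_{\mu,\psi}W_{\mu,\psi}$ rather than only the zero-orbit block, but the paper's direct manipulation is shorter and avoids that machinery entirely.
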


\begin{proof}
An induction  argument yields
$
\e_{\ve} \f_{\ve}^n = n ( \f_{\ve}^{n-1} \e_{\ve} \f_{\ve} - 2 (n-1) \f_{\ve}^{n-1} ) - (n-1) \f_{\ve}^n \e_{\ve}
$
in $\H(Z_{3,v,\ve})$.
From the above formula and a simple induction, we have the following equality.
\[
\frac{1}{n!} \e^n_{\ve} \f^n_{\ve} * [T^*_{\Delta}(\Gr^v_{d,\ve}\times \Gr^v_{d,\ve})] = 
(\t_{\ve, d} +d) (\t_{\ve, d} +d -2) \cdots (\t_{\ve, d} + d - 2(n-1)). 
\]
Setting $n=d+1$, we see that the  left and, hence right, hand sides must be zero, and it provides the desired result. 
The lemma is proved.
\end{proof}

\begin{rem}
\label{t-defining}
The relation in Lemma~\ref{defining-t} is indeed the defining relation of the algebra generated by $\t_{\ve,d}$. 
See  also~\cite{LZ19}.
\end{rem}

Recall the algebra homomorphism $\phi_{v, v-4}: \H(Z_{2, v, \ve}) \to \H (Z_{2,v-4, \ve})$ for $n=2$ 
from  Section~\ref{BM-TM}.
The following proposition says that $\t_{\ve,d}$ behaves in the simplest possible way under the transfer map $\phi_{v, v-4}$.

\begin{prop}
\label{transfer-t}
We have
$\phi_{v,v-4} (\t_{\ve, d}) = \t_{\ve, d-2}$.
\end{prop}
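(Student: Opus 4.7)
The plan is to reduce the statement to two ingredients: (i) $\phi_{v,v-4}$ is an algebra homomorphism (Proposition~\ref{phi-homo}), and (ii) both the generators $\e_\ve,\f_\ve,\h_\ve$ and the diagonal class appearing in $\t_{\ve,d}$ transfer in a controlled way to their level-$(v-4)$ counterparts. So the first step is to distribute the homomorphism over the product:
\[
\phi_{v,v-4}(\t_{\ve,d}) \;=\; \phi_{v,v-4}(\e_\ve\f_\ve-\h_\ve)\,\cdot\,\phi_{v,v-4}\bigl([T^*_\Delta(\Gr^v_{d,\ve}\times \Gr^v_{d,\ve})]\bigr),
\]
and then treat the two factors separately.

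For the diagonal idempotent, I would recognize $[T^*_\Delta(\Gr^v_{d,\ve}\times \Gr^v_{d,\ve})]$ as the fundamental class $[\overline Z_{A,\ve}]$ where $A\in\Theta_v$ is the diagonal matrix recording the dimension vector of the component $\Gr^v_{d,\ve}$. Proposition~\ref{stab} then gives $\phi_{v,v-4}([\overline Z_{A,\ve}])=[\overline Z_{A-2I,\ve}]$, and the shifted matrix $A-2I\in\Theta_{v-4}$ is precisely the diagonal matrix encoding $\Gr^{v-4}_{d-2,\ve}$. Hence the right-hand factor transfers to $[T^*_\Delta(\Gr^{v-4}_{d-2,\ve}\times \Gr^{v-4}_{d-2,\ve})]$, as required.

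For the left-hand factor, the cleanest route is to use that by (\ref{phi}) the operators $\e_\ve,\f_\ve,\h_\ve$ at level $v$ are the images $\phi_v(\e_{1,\ve}),\phi_v(\f_{1,\ve}),\phi_v(\h_{1,\ve})$ of common lifts in the projective limit, so functoriality of the projective system gives $\phi_{v,v-4}(\phi_v(\mathbf{x}_{1,\ve}))=\phi_{v-4}(\mathbf{x}_{1,\ve})$ for each such generator. Equivalently, one can argue directly from (\ref{generators}) combined with Proposition~\ref{stab}: each of $\e_\ve,\f_\ve,\h_\ve$ is a sum $\sum_A c(A)[\overline Z_{A,\ve}]$ whose coefficient is invariant under the translation $A\mapsto A-2I$, since the signs $(-1)^{a_{ii}+\cdots}$ depend only on $a_{ii}$ mod $2$ while the scalars $a_{ii}-a_{i+1,i+1}$ depend only on a difference, and both features are preserved by adding $2I$. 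This yields $\phi_{v,v-4}(\e_\ve)=\e_\ve$, and analogously for $\f_\ve,\h_\ve$, at level $v-4$. Combining the two ingredients gives
\[
\phi_{v,v-4}(\t_{\ve,d})=(\e_\ve\f_\ve-\h_\ve)\cdot[T^*_\Delta(\Gr^{v-4}_{d-2,\ve}\times \Gr^{v-4}_{d-2,\ve})]=\t_{\ve,d-2}.
\]

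The main technical point requiring care will be the bookkeeping in the identification of the diagonal class $[T^*_\Delta(\Gr^v_{d,\ve}\times\Gr^v_{d,\ve})]$ with $[\overline Z_{A,\ve}]$ for the correct diagonal $A\in\Theta_v$, and the verification that $A\mapsto A-2I$ matches the geometric shift $d\mapsto d-2$. Everything else is automatic once algebra-homomorphism status (Proposition~\ref{phi-homo}) and the transfer formula (Proposition~\ref{stab}) are in hand.
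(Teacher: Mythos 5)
Your argument has a genuine gap rooted in conflating the transfer maps for $n=2$ and $n=3$. The elements $\e_\ve,\f_\ve,\h_\ve$ are, by definition, images $\phi_v(\e_{1,\ve}),\phi_v(\f_{1,\ve}),\phi_v(\h_{1,\ve})$ in the \emph{$n=3$} projective system, whose transition maps shift $v$ by multiples of $2n=6$ (or by $3$ for the variant). The map $\phi_{v,v-4}$ in Proposition~\ref{transfer-t}, however, is the \emph{$n=2$} transfer $\H(Z_{2,v,\ve})\to\H(Z_{2,v-4,\ve})$: it is not a transition in the $n=3$ system, so your functoriality identity $\phi_{v,v-4}\circ\phi_v=\phi_{v-4}$ simply has no meaning. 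Worse, if you tried to apply the honest $n=3$ transfer $\phi_{v,v-6}$ to $\t_{\ve,d}$, you would get zero: $\t_{\ve,d}$ is supported on the component of $\F_{3,v,\ve}$ with dimension vector $(d,0,d)$, and subtracting $2I_3$ from a diagonal matrix with zero middle entry leaves $\Theta_{v-6}$. The factorization $\phi_{v,v-4}(\t_{\ve,d})=\phi_{v,v-4}(\e_\ve\f_\ve-\h_\ve)\cdot\phi_{v,v-4}([T^*_\Delta])$ is likewise illegal: $\phi_{v,v-4}$ is a homomorphism of $\H(Z_{2,v,\ve})$, but $\e_\ve\f_\ve-\h_\ve$ lives in $\H(Z_{3,v,\ve})$, not in $\H(Z_{2,v,\ve})$.

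Your fallback argument --- expand $\t_{\ve,d}$ in the $[\overline Z_{A,\ve}]$ basis of $\H(Z_{2,v,\ve})$ and observe that the coefficients are invariant under $A\mapsto A+2I_2$ --- would indeed prove the proposition, but obtaining that expansion \emph{is} the content of Proposition~\ref{t-explicit}, whose proof in the paper invokes Proposition~\ref{transfer-t} (in the case $\ve=1$, $d$ even), so this route is circular. The paper's actual proof sidesteps the computation: by a support argument one knows $\phi_{v,v-4}(\t_{\ve,d})=\t_{\ve,d-2}+a$ for some scalar $a$; the minimal-polynomial relation of Lemma~\ref{defining-t} forces $a\in\{0,\pm 2\}$; and a representation-theoretic argument (using the two connected components of $\Gr^v_{d,\ve}$ in the very even case) rules out $a=\pm 2$. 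For $\ve=-1$ a similar argument is run through the variant transfer $\phi_{v,v-2}$ of Section~\ref{variant}. This is a fundamentally different, and non-computational, strategy from the one you sketch.
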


\begin{proof}
Assume that $\ve=1$.
By considering the support of $\phi_{v, v-4}(\t_{\ve, d})$, we see that
$\phi_{v,v-4} (\t_{\ve, d}) = \t_{\ve, d-2} +a$ for some $a\in \mbb Q$.
But by Lemma~\ref{defining-t}, we must have $a=0$ or  $\pm 2$.

By Lemma~\ref{defining-t}, the finite dimensional representations of the algebra generated by $\t_{\ve,d}$ are one dimensional with scalars 
$-d, -d+2, \cdots, d-2, d$. If $a=\pm 2$, then 
of the algebra generated by $\t_{\ve,d-2}$ can be lifted to be 
pairwise non-isomorphic representations with 
scalars $ -(d-2) \pm 2, \cdots (d-2) \pm 2$ by Proposition~\ref{stab}.
Note that in this case, $\Gr^v_{d,\ve}$ is a union of two connected components of pure dimensional, say
$\Gr^{v,(i)}_{d,\ve}$ for $i=1,2$. The subspace spanned by $w:=[\Gr^{v,(1)}_{d,\ve}] - [\Gr^{v,(2)}_{d,\ve}]$ 
in $\H_{irr}(\Gr^v_{d,\ve})$
is a representation of the algebra generated by $\t_{\ve,d}$ 
of scalar $-d$, i.e., $\t_{\ve,d} . w=-dw$. 
If $a=2$ (resp. $a=-2$), then we have two isomorphic representations of eigenvalue $d$ (resp. $-d$) for $x$ in two distinct orbits. A contradiction. 
Therefore the only value of $a$ is $0$. The statement is thus proved when $\ve=1$. 

Assume now $\ve=-1$.  We have a natural embedding $\P_{\ve-2}(v) \to \P_{\ve}(v)$ defined by $\mu \mapsto (2, \mu)$. 
In light of Proposition~\ref{i-homo} and Lemma~\ref{defining-t}, 
we have $\phi_{v,v-2}(\t_{\ve, d}) = - (\t_{\ve, d-1}+b)$ where $b=\pm 1$. 
Applying the above argument for $\ve=1$, we have
\begin{align}
\label{symp-phi}
\phi_{v, v-2} (\t_{\ve,d}) = - (\t_{\ve, d-1}+1).
\end{align}
So $\phi_{v,v-4} (\t_{\ve,d}) = \phi_{v-2,v-4} \phi_{v, v-2} (\t_{\ve,d}) = - ( -(\t_{\ve, d-2}+1)+1)= \t_{\ve, d-2}.$
The statement holds for $\ve=-1$, and the proof is therefore finished.
\end{proof}

\begin{prop}
\label{t-explicit}
The element $\t_{\ve,d}$ admits the following description.
\[
\t_{\ve, d} = (-1)^{d-\delta_{1,\ve}} \left ( [T^*_{Y_{\ve}}(\Gr^v_{d, \ve} \times \Gr^v_{d,\ve}) ] + (-1)^{\delta_{1,\ve}} \delta_{-1, (-1)^d}  [T^*_{\Delta}(\Gr^v_{d, \ve} \times \Gr^v_{d,\ve}) ]\right ),
\]
where $Y_{\ve}=\{ (F, F') \in \Gr^v_{d,\ve}\times \Gr^v_{d,\ve} | | F\cap F'|=d-1\}$.
\end{prop}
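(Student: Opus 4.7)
The proof is a direct computation of the convolution product $\e_{\ve}\f_{\ve}$ restricted to the component $\Gr^v_{d,\ve}\times\Gr^v_{d,\ve}$. The plan is to first identify the support of $\t_{\ve,d}$, then determine the two resulting coefficients using convolution plus a consistency check from~(\ref{t-act}). By construction, $\t_{\ve,d}$ is supported on the union of those irreducible components of the Steinberg variety whose image under $p$ meets $\overline{Y_\ve}=Y_\ve\cup\Delta$ in $\Gr^v_{d,\ve}\times\Gr^v_{d,\ve}$. The relevant top-dimensional fundamental classes are therefore $[T^*_{Y_\ve}(\Gr^v_{d,\ve}\times\Gr^v_{d,\ve})]$ and $[T^*_{\Delta}(\Gr^v_{d,\ve}\times\Gr^v_{d,\ve})]$, with the understanding that in the very-even case $(v,\ve)=(\mbox{even},1)$ the diagonal class is the $\mbb Z/2\mbb Z$-invariant combination of the two connected components via~(\ref{Z-G-fix}). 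Hence $\t_{\ve,d}=\alpha\,[T^*_{Y_\ve}]+\beta\,[T^*_{\Delta}]$ for some scalars $\alpha,\beta\in\mbb Q$, and it remains to pin these down.

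To find $\alpha$ I would apply \cite[Thm.~2.7.26]{CG97} to the triple-product diagram $\Gr^v_{d,\ve}\times\Gr^v_{d,\ve}\times\Gr^v_{d,\ve}$. Since $\e_{\ve}=\f_{\ve}$ for $n=2$ (so that $\e_{\ve}\f_{\ve}=\e_{\ve}^{2}$), each factor decomposes via~(\ref{generators}) as a main piece supported on $Y_\ve$ plus a diagonal correction coming from the $[\overline Z_{\overline{A_r},\ve}]$ term. On the open stratum $Y_\ve$ the triple-product fiber is a single point, namely the unique $F''\in\Gr^v_{d-1,\ve}$ given by $F''=F\cap F'$, so the main-main convolution produces precisely the class $[T^*_{Y_\ve}]$ with sign controlled by the $(-1)^{a_{rr}-\delta_{1,\ve}}$ prefactors of~(\ref{generators}), giving $\alpha=(-1)^{d-\delta_{1,\ve}}$. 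For $\beta$ there are several contributions to combine on $\Delta$: the main-main convolution, whose fiber $\{F''\in\Gr^v_{d-1,\ve}\mid F''\subset F\}\cong\mbb P^{d-1}$ has Euler characteristic $d$ by \cite[Lem.~8.5]{N98}; the two cross terms between the main and the correction pieces of $\e_{\ve}$; the correction-correction contribution; and finally subtraction of $\h_{\ve}|_{\Delta}=0$, which vanishes since for $n=2$ one has $\theta(1)=1$ and so $h_{1,\theta}=0$. As a consistency check, applying the proposed $\alpha[T^*_{Y_\ve}]+\beta[T^*_{\Delta}]$ to $[\Gr^v_{d,\ve}]$ via \cite[Thm.~2.7.26]{CG97} with $X_3=\{\bullet\}$ reduces to a single linear equation in $\alpha,\beta$ via (\ref{t-act}), after one computes the Euler characteristic of the Schubert cell $\{F'\in\Gr^v_{d,\ve}\mid\dim F\cap F'=d-1\}$.

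The main obstacle is the sign bookkeeping in the computation of $\beta$. The $(-1)^{a_{rr}-\delta_{1,\ve}}$ prefactors from the modified $n$-even definition of $\e_{r,\ve}$, the orientation signs arising from the dimensions of the convolution fibers in \cite[Thm.~2.7.26]{CG97}, and the parity condition $\delta_{-1,(-1)^{a_{rr}}}$ that controls whether the diagonal correction is present must all be threaded together so that the four diagonal contributions collapse to the answer $(-1)^{d-\delta_{1,\ve}}(-1)^{\delta_{1,\ve}}\delta_{-1,(-1)^d}$ predicted by the proposition (in particular, cancelling the Euler-characteristic factor $d$ in favor of a $0/{\pm 1}$ parity term). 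A secondary difficulty arises in the very-even case $(v,\ve)=(\mbox{even},1)$: the variety $\Gr^v_{d,\ve}$ splits into two connected components, and one must verify that the identity respects the $\mbb Z/2\mbb Z$-equivariance described in~(\ref{Z-G-fix}), so that the pair of component-fundamental-classes combines into the single $[T^*_{\Delta}]$ appearing in the proposition.
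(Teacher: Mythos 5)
Your proposal takes a genuinely different route from the paper, but it has both a conceptual error and an unfilled gap.

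The paper does not attempt a direct convolution computation for general $d$. Instead, it applies \cite[Thm.~2.7.26]{CG97} only in the trivial base case $d=1$, where $\Gr^2_{1,\ve}$ is zero-dimensional up to a $\mathbb P^1$ factor and the convolution collapses, then propagates to all $d$ by the transfer-map identity of Proposition~\ref{transfer-t} and equation~(\ref{symp-phi}); the remaining base case $\ve=1$, $d$ even is pinned down by using $\phi_{4,0}(\t_{\ve,2})=\t_{\ve,0}=0$. Your approach, by contrast, is a direct convolution computation in $\H_*(Z_{3,v,\ve})$ for arbitrary $d$. That is in principle feasible, but you never actually carry out the computation of $\beta$ — you explicitly identify ``the sign bookkeeping in the computation of $\beta$'' as the main obstacle and only describe what would have to happen for the pieces to collapse to the stated answer. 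As written this is not a proof but a plan with the hard step left open, and the entire point of the paper's transfer-map argument is to avoid exactly that bookkeeping.

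More seriously, you have confused the $n=2$ and $n=3$ generators. In the definition $\t_{\ve,d}=(\e_\ve\f_\ve-\h_\ve)\cdot[T^*_\Delta]$, the symbols $\e_\ve,\f_\ve,\h_\ve$ are the $n=3$ elements $\phi_v(\e_{1,\ve}),\phi_v(\f_{1,\ve}),\phi_v(\h_{1,\ve})$ from Section~\ref{n=3}, not the $n=2$ generators. Hence $\e_\ve\neq\f_\ve$ (they raise and lower the step $\dim F_1$), neither carries the $[\overline Z_{\overline A_r,\ve}]$ correction term (that correction appears only in the even-$n$ definition, which is precisely what this proposition is meant to justify — invoking it here would be circular), and, crucially, $\h_\ve$ restricted to the $(d,d)$ component is $d\,[T^*_\Delta]$, not $0$: its subtraction is what cancels the Euler characteristic $\chi(\mathbb P^{d-1})=d$ appearing in the main-main convolution on the diagonal, leaving the $0/\pm1$ parity coefficient in the statement. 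Your assertion that $\h_\ve|_\Delta=0$ because $\theta(1)=1$ for $n=2$ mistakes the $n=2$ Lie-algebra element $h_{1,\theta}$ for the $n=3$ homology class $\h_\ve$, and would make the Euler-characteristic factor $d$ impossible to eliminate. Finally, your identification of the support via $\overline{Y_\ve}=Y_\ve\cup\Delta$ is also delicate in the very-even case, where $\Gr^v_{d,\ve}$ splits into two components and $Y_\ve$ is itself already closed, disjoint from $\Delta$; your appeal to~(\ref{Z-G-fix}) gestures at this but does not resolve it.
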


\begin{proof}
By~\cite[Thm. 2.7.26]{CG97}, we have
\[
\e_{\ve} \f_{\ve} = (-1)^{1-\delta_{1,\ve}} [T^*_{Y_{\ve}}( \Gr^2_{1,\ve}\times \Gr^{2}_{1,\ve})], \quad \mbox{when}\ d=1. 
\]
Now apply (\ref{symp-phi}) to obtain the result for $\ve=-1$, and the case $\ve=1$, $d$ odd.

When $\ve=1$ and $d=2$, there is
\[
\t_{\ve,d} = - ( [T^*_{Y_{\ve}}(\Gr^4_{2, \ve} \times \Gr^4_{2,\ve}) ] + a),\quad \mbox{for some $a\in \mbb Z$}.
\] 
But by Proposition~\ref{transfer-t}, 
$\phi_{4,0}(\t_{\ve, 2}) = \t_{\ve, 0}=0$ implies that $a=0$. This shows that the statement holds when $\ve=1$ and $d$ even. The proposition is thus proved.
\end{proof}

\subsection{Relations in Schur algebras}
\label{Schur}

With the above analysis on rank one, we have the following result.
Recall the map $\phi_v$ from (\ref{phi}).

\begin{prop}
\label{Schur-def}
The elements $\phi_v(\e_{i,\ve})$, $\phi_v(\h_{i,\ve})$ satisfy the defining relations of $\U(\mathfrak{sl}_n^{\theta})$.
\end{prop}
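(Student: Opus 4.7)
The plan is to verify the defining relations in (\ref{Def-odd}) or (\ref{Def-even}) one family at a time inside $\H(Z_{n,v,\ve})$, reducing whenever possible to the rank-one computations carried out in Section~\ref{n=3} and in the $n=2$ subsection, and handling the homogeneous cases by direct convolution. Throughout, the key tool will be Lemma~\ref{H-leading}, which computes the leading fundamental class of a product of the generators, combined with the stabilization of Proposition~\ref{stab}.

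The Cartan-type relations $\h_{i,\ve}+\h_{\theta(i),\ve}=0$, $[\h_{i,\ve},\h_{j,\ve}]=0$ and the weight identity $[\h_{i,\ve},\e_{j,\ve}]=(c_{ij}-c_{\theta(i),j})\e_{j,\ve}$ will be read off directly from the definitions (\ref{generators}): $\h_{i,\ve}$ acts diagonally in the basis $\{[\overline Z_{\overline A,\ve}]\}$, and the shift produced by $\e_{j,\ve}$ is governed by $E^{\theta}_{j,j+1}$. The commutator $[\e_{i,\ve},\e_{j,\ve}]=0$ for $c_{ij}=0$ with $i\neq\theta(j)$ will follow from Lemma~\ref{H-leading}, since the leading class in $\e_{i,\ve}\e_{j,\ve}$ and in $\e_{j,\ve}\e_{i,\ve}$ is the same fundamental class and no lower terms appear. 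When $c_{ij}=0$ but $i=\theta(j)$, the relation $[\e_{i,\ve},\e_{j,\ve}]=\h_{i,\ve}$ is of $\mathfrak{sl}_2^{\theta}$-type, and we will obtain it by localizing the convolution to the Steinberg subvariety of flags varying only at the $i$-th step (with the $\theta(i)$-th step constrained by isotropy) and applying the explicit formula (\ref{rank-one-action}).

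The homogeneous Serre relations ($c_{ij}=-1$ with $i\neq\theta(j)$ for $n$ odd, or $i\neq\theta(i)$ for $n$ even) will be established by the argument of~\cite[Thm.~4.2.5]{CG97}: the modifications produced by $\e_{i,\ve}$ and $\e_{j,\ve}$ occur at non-central neighboring steps of the flag, while their symmetric $\theta$-images occur on the opposite side, so the convolution decouples into the classical $\mathfrak{sl}_3$ computation on one half together with its mirror image on the other, and both halves contribute the same Serre identity.

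The main obstacle is the nonhomogeneous Serre relation. For odd $n$, when $c_{ij}=-1$ and $i=\theta(j)$, the pair $(i,j)$ is necessarily $(r,r+1)$ or $(r+1,r)$, so the relevant modifications all happen at the three central steps. We will localize the convolution to this central window, realizing $\e_{i,\ve}$ and $\e_{j,\ve}$ inside a copy of the $n=3$ Schur algebra; the required identity is then exactly (\ref{rank-one-Serre}), already established via Propositions~\ref{Faith} and~\ref{Rank-one-iso}. For even $n$ and $i=\theta(i)=r$, the corrected definition of $\e_{r,\ve}$ is essential: the localization instead realizes $\e_{r,\ve}$ inside an $n=2$ Schur algebra, and the identity $\e_{r,\ve}^2\e_{j,\ve}-2\e_{r,\ve}\e_{j,\ve}\e_{r,\ve}+\e_{j,\ve}\e_{r,\ve}^2=\e_{j,\ve}$ will follow from Proposition~\ref{t-explicit}, the minimal-polynomial identity of Lemma~\ref{defining-t} and the transfer compatibility of Proposition~\ref{transfer-t}. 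The most delicate point throughout will be keeping track of the signs $(-1)^{a_{ii}}$ and the extra $\delta_{-1,(-1)^{a_{ii}}}$-correction in (\ref{generators}) for even $n$; getting these right is precisely what forces the non-naive choice of Chevalley generator at the middle node.
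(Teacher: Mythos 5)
Your handling of the odd case is compatible with the paper's (the paper simply asserts that all of (\ref{Def-odd}) are known except the central nonhomogeneous Serre relations, and reduces those to (\ref{rank-one-Serre}), which is what your localization to the $n=3$ window accomplishes). A small caveat there: for the relation $[\e_{i,\ve},\e_{\theta(i),\ve}]=\h_{i,\ve}$ with $c_{i,\theta(i)}=0$ you cite (\ref{rank-one-action}), but that formula is the $n=3$ rank-one picture where $c_{1,\theta(1)}=-1$; the $c_{ij}=0$, $i=\theta(j)$ case is really a $\mathfrak{gl}_2$-type computation in the spirit of Ginzburg's original $\mathfrak{sl}_n$ argument, not the $n=3$ one. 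That is easily repaired, though.

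The even case is where there is a real gap. The nonhomogeneous Serre relation in (\ref{Def-even}) involves the middle node $r=n/2$ and an adjacent node $j=r\pm 1$; this is a two-node relation, so any localization must retain the window $F_{r-1}\subset F_r\subset F_{r+1}$ with $F_r$ Lagrangian and $F_{r-1}^{\perp}=F_{r+1}$, i.e.\ an $n=4$ picture, not an $n=2$ one. Propositions~\ref{t-explicit},~\ref{transfer-t} and Lemma~\ref{defining-t} by themselves live entirely in the one-node $n=2$ picture and cannot by themselves produce a cross-node Serre identity. The paper's mechanism is different: it embeds $Z_{n,v,\ve}\subseteq Z_{n+1,v,\ve}$ (with $n+1$ odd) by inserting a duplicate of the Lagrangian step $F_r$, uses Proposition~\ref{t-explicit} to identify $\phi_v(\e_{r,\ve})$ (even) with the restriction of $\e'_{r,\ve}\e'_{r+1,\ve}-\h'_{r,\ve}$ (odd), and then derives the entire set of even relations (\ref{Def-even}) as algebraic consequences of the already-established odd relations (\ref{Def-odd}) in $\H(Z_{n+1,v,\ve})$. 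Without this embedding, or an independent $n=4$ rank-one analysis that the paper does not carry out, your plan does not close the even Serre relation.
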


\begin{proof}
Assume that $n$ is odd. Then the first four defining relations in (\ref{Def-odd}) can be verified as in~\cite{CG97}, except when $i$ or $j$ is $r$ in the fourth relation.
The latter can be checked directly in a similar way as the third one. 
So all defining relations in (\ref{Def-odd}) are known except the case when $(i, j) = (r, r+1), (r+1, r)$, i.e., the fifth relation.
But the latter cases are reduced to (\ref{rank-one-Serre}), and so the statement in the proposition holds for $n$ being odd.

Assume that $n$ is even. Then there is a natural inclusion $Z_{n, v,\ve}\subseteq Z_{n+1,v,\ve}$ by adding an extra $F_r$ in the involved flags $F$. 
In this case all $\phi_v(\e_{i,\ve})$ are restrictions of the corresponding elements from $\H(Z_{n+1,v,\ve})$ except $\phi_v(\e_{r,\ve})$ which is 
the restriction of $\e_{r,\ve}'\e_{r+1,\ve}'-\h_{r,\ve}'$ from $\H(Z_{n+1,v,\ve})$ in light of Proposition~\ref{t-explicit} (see also (\ref{t-d}). 
A standard algebraic argument shows that the  relations (\ref{Def-even}) are consequences of the relations in $\H(Z_{n+1, v,\ve})$ corresponding to (\ref{Def-odd}).
For the convenience of the reader, let us produce the proof here. To ease  the burden of notations, we  write $\e_{i,\ve}$ for $\phi_v(\e_{i,\ve})$ in this proof.
The first four relations in (\ref{Def-even})  can be checked in a way similar to that of (\ref{Def-odd}). It is reduced to show the last relation, that is  
\begin{align}
&\e_{j,\ve} \e_{j,\ve} \e_{i,\ve} - 2 \e_{j,\ve} \e_{i,\ve} \e_{j,\ve} + \e_{i,\ve} \e_{j,\ve} \e_{j,\ve} =0; \label{symp-even-a}\\
&\e_{i,\ve} \e_{i,\ve} \e_{j,\ve} - 2 \e_{i,\ve} \e_{j,\ve} \e_{i,\ve} + \e_{j,\ve} \e_{i,\ve} \e_{i,\ve} =  \e_{j,\ve}; \quad \mbox{if} \ c_{ij}= -1, i=\theta(i). \label{symp-even-b}
\end{align}

The case when $c_{ij}=-1$ and $i=\theta(i)$ is  $i=r$ and $j= r \pm 1$. 
Let $\bj_{r,\ve}=\sum_A [Z_A]$ where $A$ runs over all diagonal matrices in $\Theta_v$ such that $a_{r+1,r+1}=0$. This is an  idempotent in $\H(Z_{n+1v,\ve})$ and
$\H(Z_{n,v,\ve}) \cong \bj_{r,\ve} \H(Z_{n+1,v,\ve})\bj_{r,\ve}$. We shall identify $\H(Z_{n,v,\ve})$ as a subalgebra in $\H(Z_{n+1,v,\ve})$ under the isomorphism. 
It is clear that if $j\neq \theta (j)$ then the image of $\e_{j,\ve} \in \H(Z_{n, v,\ve})$ under the identification
is $ \e'_{j,\ve} \bj_{r,\ve}$ if $j< r$ or $\e'_{j+1,\ve} \bj_{r,\ve}$ if $j>r$. 
Moreover, $\e_{r,\ve} = (\e_{r,\ve}'\e_{r+1,\ve}'-\h_{r,\ve}') \bj_{r,\ve}$ under the identification. 
For (\ref{symp-even-a}), it is enough to show that the image, say $S_{ji}$, of the left-hand side of (\ref{symp-even-a}) is zero. By the relations  satisfied in $\H(Z_{n+1, v,\ve})$, we have, for $j<i$, 
\begin{align*}
S_{ji} & =-  \e'_{j,\ve}  \e'_{j,\ve}  \h'_{i,\ve}  \bj_{i,\ve}  
-  \h'_{i,\ve} \e'_{j,\ve}  \e'_{j,\ve} \bj_{i,\ve} 
+ 2  \e'_{j,\ve}  \h'_{i,\ve}  \e'_{j,\ve}  \bj_{i,\ve} \\
&= - \e'_{j,\ve} [ \e'_{j,\ve},  \h'_{i,\ve}]  \bj_{i,\ve} - [ \h'_{i,\ve},  \e'_{j,\ve}]  \e'_{j,\ve} 
 \bj_{i,\ve} \\
& = - \e'_{j,\ve} \e'_{j,\ve}  \bj_{i,\ve} +  \e'_{j,\ve}  \e'_{j,\ve}  \bj_{i,\ve}=0.
\end{align*}
For $j<i$, the proof is similar. 
Thus the equality in (\ref{symp-even-a}) holds. 

The equality in (\ref{symp-even-b}) can be verified in a similar way, though more complicated.
More precisely, we want to show that the image of the equality holds in $\H(Z_{n+1, v,\ve})$. 
Let $S_{ij}$ be the image of the left-hand side of (\ref{symp-even-b}) under the above identificaton. 
Then it is enough to show that $S_{ij} =  \e'_{j,\ve}  \bj_{r,\ve}$ if $j<i$ or $\e'_{j+1,\ve}\bj_{r,ve}$ if $j<i$. 
Assume that $j<i$. Since $  \e'_{r,\ve}  \bj_{r,\ve} =0$, 
we see that the first term in $S_{ij}$ can be simplified to  
\begin{align*}
\left (\frac{1}{2} (\e'_{i,\ve})^2 (\e'_{i+1,\ve})^2 + 2  \e'_{i,\ve}  \e'_{i+1,\ve} - \h'_{i,\ve}  \e'_{i,\ve}  \e'_{i+1,\ve} -  \e'_{i,\ve}  \e'_{i+1,\ve}  \h'_{i,\ve}  + ( \h'_{i,\ve})^2 \right ) 
 \e'_{j,\ve}  \bj_{i,\ve}.
\end{align*} 
Similarly, the third term in $S_{ij}$ can be simplified to 
\begin{align*}
 \e'_{j,\ve} \left (\frac{1}{2} ( \e'_{i,\ve})^2 ( \e'_{i+1,\ve})^2 
+ 2  \e'_{i,\ve}  \e'_{i+1,\ve} -  \h'_{i,\ve}  \e'_{i,\ve}  \e'_{i+1,\ve} 
- \e'_{i,\ve}  \e'_{i+1,\ve}  \h'_{i,\ve} + ( \h'_{i,\ve})^2 \right )  \bj_{i,\ve}.
\end{align*}
The second term in $S_{ij}$ can be simplified as follows.
\begin{align*}
\left (\frac{1}{2} \e'_{i,\ve}  \e'_{j,\ve}  \e'_{i,\ve} (\e'_{i+1,\ve})^2 + 2  \e'_{i,\ve} 
 \e'_{j,\ve}  \e'_{i+1,\ve} -  \h'_{i,\ve}  \e'_{j,\ve}  \e'_{i,\ve}  \e'_{i+1,\ve} 
-  \e'_{i,\ve}  \e'_{i+1,\ve}  \e'_{j,\ve}  \h'_{i,\ve} 
+  \h'_{i,\ve}  \e'_{j,\ve}  \h'_{i,\ve} \right )  \bj_{i,\ve}.
\end{align*}
With these and the relations satisfied in $\H(Z_{n+1, v,\ve})$, we can see that $S_{ij}=  \e'_{j,\ve}  \bj_{i,\ve}$ if $j<i$. 
Here we skip some details. The case for $j>i$ can be shown similarly. The equality in (\ref{symp-even-b}) is thus proved. 
Hence the proposition holds. 
\end{proof}

We are ready to give a proof of Theorems~\ref{U-iso-odd} and~\ref{U-iso-even}.

\begin{proof}[Proof of Theorems~\ref{U-iso-odd} and~\ref{U-iso-even}]
Due to Proposition~\ref{Schur-def}, we know that the map $\U(\mathfrak{sl}^{\theta}_n) \to \bU_{n,\ve}$ is a surjective algebra homomorphism.
Due to Proposition~\ref{H-estimate}, the homomorphism is an  isomorphism. Theorems~\ref{U-iso-odd} and~\ref{U-iso-even} are proved. 
\end{proof}

Finally, we discuss 
the case $(n, v, \ve)= ( even, odd, 1)$. In this case, 
the varieties $\F_{n,v,\ve}$ and $Z_{n,v,\ve}$ defined previously are empty and, to rectify this defect, we substitute them by 
\begin{align}
\label{rectify}
\begin{split}
\F_{n,  v,\ve} & :=\{ F\in \F_{n+1,v,\ve}| F_r \ \mbox{is maximal isotropic}\}, \\
Z_{n, v,\ve}& : = \{(x, F, F')\in Z_{n+1, v, \ve}| F, F' \in \F_{n, v, \ve}\}, 
\quad \mbox{when} \ (n, v, \ve)= ( even, odd, 1).
\end{split}
\end{align}

Let us put a superscript $'$ on the generators in $\H_{n+1,\ve}$ as $\e'_{i,\ve}, \f'_{i,\ve}, \h'_{i,\ve}$.
Consider their images $\phi_v (\e'_{i,\ve}), \phi_v (\f'_{i,\ve}), \phi_v(\h'_{i,\ve})$ for $1\leq i\leq r-1$ and $ \phi_{v} (\e'_{r,\ve} \e'_{r+1,\ve} -\h'_{r,\ve})$
in $\H (Z_{n+1,v,\ve})$. 
Consider the following elements in $\H(Z_{n, v,\ve})$ as the restrictions of the above to $Z_{n, v,\ve}$.
\begin{align*}
\begin{split}
\h_{i,\ve; v} &= 
\begin{cases}
\phi_v(\h'_{i,\ve})|_{Z_{n, v, \ve}}, & 1\leq i\leq r-1,\\
- \phi_v(\h'_{n-i,\ve})|_{Z_{n, v, \ve}}, & 1\leq n-i \leq r-1,\\
0, & i=r,
\end{cases}\\
\e_{i, \ve; v}&=
\begin{cases}
  \phi_v (\e'_{i,\ve})|_{Z_{n, v,\ve}}, & 1\leq i\leq r-1,\\
\phi_v (\f'_{n-i,\ve})|_{Z_{n, v,\ve}}, & 1\leq  n-i \leq r-1,\\
\phi_{v} (\e'_{r,\ve} \e'_{r+1,\ve} -\h'_{r,\ve})|_{Z_{n, v,\ve}}, & i=r.
\end{cases}
\end{split}
\end{align*}
By applying the same argument in the proof of Proposition~\ref{Schur-def} for $n$ even, we have 

\begin{prop}
\label{Schur-def-2}
The assignments $h_{i,\theta} \to \h_{i, \ve;v}$ and  $e_{i,\theta} \mapsto \e_{i,\ve;v}$ for various $i$ define a surjective algebra homomorphism
$\U(\mathfrak{sl}_n^{\theta}) \to \H(Z_{n, v,\ve})$.
\end{prop}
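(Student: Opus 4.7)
The plan is to mirror the $n$ even portion of the proof of Proposition~\ref{Schur-def}, exploiting the inclusion $Z_{n,v,\ve} \subseteq Z_{n+1,v,\ve}$ supplied by (\ref{rectify}) together with the fact that $n+1$ is odd. Applying Theorem~\ref{U-iso-odd} to $n+1$ gives a surjection $\U(\mathfrak{sl}_{n+1}^{\theta}) \twoheadrightarrow \bU_{n+1,\ve}$, and composing with $\phi_v$ yields a surjective algebra homomorphism $\U(\mathfrak{sl}_{n+1}^{\theta}) \twoheadrightarrow \H(Z_{n+1,v,\ve})$ that sends each Chevalley generator $e_{i,\theta}$ to $\phi_v(\e'_{i,\ve})$. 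The proposition will be obtained by restricting this construction to $Z_{n,v,\ve}$ and using the dictionary $\e'_{r,\ve}\e'_{r+1,\ve}-\h'_{r,\ve}\leftrightarrow \e_{r,\ve;v}$.

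First I would verify the defining relations (\ref{Def-even}). For $i,j\neq r$, every generator $\e_{i,\ve;v}$, $\h_{i,\ve;v}$ is by construction the restriction of $\phi_v(\e'_{i,\ve})$ or $\phi_v(\h'_{i,\ve})$ (possibly after the $\theta$-swap $i\mapsto n-i$); consequently the portion of (\ref{Def-even}) not involving index $r$ is inherited verbatim from the relations already established for $\H(Z_{n+1,v,\ve})$ via Theorem~\ref{U-iso-odd}. The relations involving $i=r$ are the real content: here $\e_{r,\ve;v}$ comes from the quadratic combination $\phi_v(\e'_{r,\ve}\e'_{r+1,\ve}-\h'_{r,\ve})$, so the $[h,e]$, $[e,e]$ and nonhomogeneous Serre relations at $i=r$ reduce to algebraic identities inside $\U(\mathfrak{sl}_{n+1}^{\theta})$ asserting that the substitution $e_{r,\theta}\leftrightarrow e'_{r,\theta}e'_{r+1,\theta}-h'_{r,\theta}$ satisfies (\ref{Def-even}). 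These are precisely the identities invoked in the even case of Proposition~\ref{Schur-def}, whose verification is purely formal given Proposition~\ref{t-explicit} and the minimal polynomial of $\t_{\ve,d}$ from Lemma~\ref{defining-t}.

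Next I would establish surjectivity. By Theorem~\ref{U-iso-odd} together with Proposition~\ref{H-estimate}, the fundamental classes $[\overline Z_{A,\ve}]$ for $A\in\Theta_v$ span $\H(Z_{n+1,v,\ve})$ and each lies in the image of $\U(\mathfrak{sl}_{n+1}^{\theta})$. Restriction to $Z_{n,v,\ve}$ sends such a class either to the fundamental class of its intersection with $Z_{n,v,\ve}$ or to zero, depending on whether the maximal-isotropy condition on $F_r$ from (\ref{rectify}) is compatible with the orbit parameter $A$. Since every $\G_{v,\ve}$-orbit closure in $Z_{n,v,\ve}$ arises in this way, the restricted fundamental classes span $\H(Z_{n,v,\ve})$, and hence the composition $\U(\mathfrak{sl}_{n+1}^{\theta}) \to \H(Z_{n+1,v,\ve}) \to \H(Z_{n,v,\ve})$ is surjective. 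The dictionary $\e'_{r,\ve}\e'_{r+1,\ve}-\h'_{r,\ve}\mapsto \e_{r,\ve;v}$ lets this composition be rewritten so that it factors through $\U(\mathfrak{sl}_n^{\theta}) \to \H(Z_{n,v,\ve})$ (i.e.\ every generator needed for the image is produced by one of the $\h_{i,\ve;v}$, $\e_{i,\ve;v}$), yielding the desired surjectivity.

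The main obstacle, as in the generic even case, is the nonhomogeneous Serre relation at the middle node $i=r$; it is quartic and cannot be verified by a direct geometric computation. The crucial input that makes the substitution consistent is Proposition~\ref{t-explicit}, which identifies the quadratic expression $\e'_{r,\ve}\e'_{r+1,\ve}-\h'_{r,\ve}$ restricted to the diagonal with the Casimir-type element $\t_{\ve,d}$ satisfying Lemma~\ref{defining-t}. Once this identification is in hand, the middle-node Serre relation becomes a formal consequence of the already-verified relations in $\H(Z_{n+1,v,\ve})$, exactly as in the proof of Proposition~\ref{Schur-def} for even $n$.
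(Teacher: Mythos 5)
Your proposal is correct and takes essentially the same route as the paper: the paper's proof of Proposition~\ref{Schur-def-2} is a one-line reference to the $n$-even portion of the proof of Proposition~\ref{Schur-def}, which is exactly the reduction you carry out — embed $Z_{n,v,\ve}$ into $Z_{n+1,v,\ve}$ via (\ref{rectify}), inherit the relations for indices $\neq r$ from the odd rank $n+1$, and handle the middle node through the dictionary $\e'_{r,\ve}\e'_{r+1,\ve}-\h'_{r,\ve}\leftrightarrow\e_{r,\ve;v}$ coming from Proposition~\ref{t-explicit}. You spell out the surjectivity step (spanning by restricted fundamental classes) that the paper leaves implicit; this is a welcome addition rather than a deviation.
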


We end this section with a remark. 

\begin{rem}
(1). By Propositions~\ref{H-estimate} and~\ref{Schur-def-2}, $\H(Z_{n, v, \ve})$ is generated by $\phi_v(\e_{i,\ve})$ and  $\phi_v(\h_{i,\ve})$ for various $i$.
It is isomorphic to the hyperoctahedral Schur algebras studied in~\cite{G97}.
See~\cite[Conjecture 5.3.4]{Li19a} for  a more general conjecture on $\sigma$-quiver varieties. 

(2). Propositions~\ref{Schur-def} and ~\ref{Schur-def-2} can be shown by exploiting the  equivariant (K-)homology  techniques in~\cite{V93}, 
which will appear else where.
For  the case of type $B/C$ and $n$=odd, see also the arXiv preprints
\href{https://arxiv.org/abs/1911.00851}{arXiv:1911.00851}
and 
\href{https://arxiv.org/abs/1911.07043}{arXiv:1911.07043}  by Ma et al.

(3). The stabilization  in Section~\ref{Z} does not apply to the newly defined $Z_{n,v,\ve}$ in (\ref{rectify}) due to the following inconsistency: 
for a triple $(x, F, F')\in Z_{n, v,\ve}$, the largest part in the Jordan type of $x$ could be $n+1$ while the dimension jump for preserving the maximal isotropic property of $F$ and $F'$ needs to be an even multiple of $n$. 

(4). A presentation of $\H(Z_{n, v,\ve})$ is given in~\cite{LZ19}. It is desirable to see if one can check geometrically, or find geometric meanings of,  the extra relations other than those for
$\U(\mathfrak{sl}_n)$.
See~\ref{t-defining} for the $n=2$ case.

(5). Thanks to the analysis in Section~\ref{variant}, Theorems~\ref{U-iso-odd} and~\ref{U-iso-even} 
for the cases $(v, \ve) =(2\ell, 1)$ can be deduced from similar results in the case $(v,\ve)=(2\ell -1, 1)$. 
This method avoids the unpleasant extra treatment caused by the disconnectedness of the group $\G_{v,\ve}$. 
\end{rem}

\subsection{A natural representation}

In this section, we shall lift the action in (\ref{rank-one-action}) to general cases. 
Recall $\F_{n,v,\ve}$ is the variety of $n$-step isotropic flags in $V$ and 
$\Lambda_v$ from (\ref{Lambda-v}).
For convenience, we set
$
\F_{\O,\ve}=\O, \ [\F_{\O,\ve}]=0.
$
Let $W_{n, v,\ve}$ be the space spanned by the fundamental classes $[\F_{\mbf d,\ve}]$ for $\mbf d\in \Lambda_{v}$. 
Note that $W_{3,v,\ve}=W_{v,\ve}$ in Section~\ref{n=3}.
To each $i$, set
\begin{align*}
\mbf d_{i,\ve}^+ & =
\begin{cases}
\mbf d + \delta_i + \delta_{n+1-i} - \delta_{i+1} - \delta_{n-i}  & \mbox{if}\ d_{i+1}\geq 1,\\
\O &\mbox{o.w.}
\end{cases}\\
\mbf d_{i,\ve}^- & =
\begin{cases}
\mbf d - \delta_i - \delta_{n+1-i} + \delta_{i+1} + \delta_{n-i}  & \mbox{if}\ d_{i}\geq 1,\\
\O &\mbox{o.w.}
\end{cases}
\end{align*}

\begin{prop}
\label{n-W}
The space $W_{n,v,\ve}$ admits an irreducible  $\U(\mathfrak{sl}^{\theta}_n)$-module structure defined by 
\begin{align*}
\h_{i,\ve} & * [\F_{\mbf d,\ve}]  = ( d_{i} - d_{i+1} )[\F_{\mbf d, \ve}], \\
\e_{i,\ve}&  * [\F_{\mbf d,\ve}]  = 
(\mbf d^+_{i, \ve})_i 
[\F_{\mbf d_{i,\ve}^+, \ve}],\\
\f_{i,\ve} & * [\F_{\mbf d,\ve}]=
(\mbf d^-_{i,\ve})_{i+1} [\F_{\mbf d_{i,\ve}^-, \ve}],\quad \forall 1\leq i\leq n-1.
\end{align*}
\end{prop}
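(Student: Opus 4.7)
The plan is to realize $W_{n,v,\ve}$ inside $\H_*(\F_{n,v,\ve})$, with the latter carrying a natural $\H(Z_{n,v,\ve})$-module structure via the convolution action obtained from the zero-section embedding $\F_{n,v,\ve}\hookrightarrow \M_{n,v,\ve}$. The claimed formulas should then emerge by computing the convolution of each generator in (\ref{generators}) against a fundamental class $[\F_{\mbf d,\ve}]$, exactly as was done in Section~\ref{n=3} for the $n=3$ case using Theorem~2.7.26 of~\cite{CG97}. Note that $W_{n,v,\ve}$ is preserved because it is spanned by top-dimensional classes and $\H(Z_{n,v,\ve})$ acts by top-dimensional correspondences.

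For $\h_{i,\ve}$ only the diagonal summands $\overline A$ contribute, and each acts on $[\F_{\mbf d,\ve}]$ by the scalar $a_{ii}-a_{i+1,i+1}$, which at $\mbf d$ specializes to $d_i-d_{i+1}$. For $\e_{i,\ve}$ and $\f_{i,\ve}$, of the infinite sum in (\ref{generators}) only the single summand whose column sum equals $\mbf d$ can convolve nontrivially against $[\F_{\mbf d,\ve}]$; applying Theorem~2.7.26 of~\cite{CG97} together with Lemma~8.5 of~\cite{N98}, the result equals $[\F_{\mbf d^\pm_{i,\ve},\ve}]$ scaled by the signed Euler characteristic of the smooth fiber of the first projection on the corresponding correspondence. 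Away from the middle index this fiber is an ordinary projective space $\Gr^{k}_1$, while at $i=r$ or $r+1$ it is a one-step isotropic Grassmannian $\Gr^{k}_{1,\ve'}$; in either situation the Euler characteristic yields the coefficient $(\mbf d^+_{i,\ve})_i$ or $(\mbf d^-_{i,\ve})_{i+1}$, and the sign factor $(-1)^{a_{ii}+\delta_{i,r+1}(\delta_{-1,(-1)^v}+1-\delta_{1,\ve})}$ built into (\ref{generators}) is tuned precisely to absorb the parity $(-1)^{\dim\,\mathrm{fiber}}$ that appears in the convolution formula.

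That these operators define a $\U(\mathfrak{sl}_n^\theta)$-action is automatic once they are identified with the convolution action of $\H(Z_{n,v,\ve})$, by Proposition~\ref{Schur-def}. For irreducibility I would exhibit a highest weight vector, say $[\F_{\mbf d^\circ,\ve}]$ where $\mbf d^\circ$ concentrates dimension in one extreme step, and note that the coefficients $(\mbf d^\pm_{i,\ve})_i$ and $(\mbf d^\pm_{i,\ve})_{i+1}$ are strictly positive whenever $\mbf d^\pm_{i,\ve}\neq \O$; hence repeated application of the $\f_{i,\ve}$ reaches every $[\F_{\mbf d,\ve}]$. Since the $\h_{i,\ve}$-weights separate the $\mbf d$'s, every weight space is one-dimensional and the cyclic weight module is irreducible.

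The main obstacle is the careful sign bookkeeping at and around the middle index $r$, especially the extra correction $\delta_{i,r}(-1)^{\delta_{1,\ve}}\delta_{-1,(-1)^{a_{ii}}}[Z_{\overline A_r,\ve}]$ built into the even-$n$ definition of $\e_{r,\ve}$, which is needed precisely to compensate for the splitting phenomenon reflected in Proposition~\ref{t-explicit}. Secondary care is required in the very even case, where $\F_{\mbf d,\ve}$ may reducibly split and $[\F_{\mbf d,\ve}]$ should be interpreted via the $\mbb Z/2\mbb Z$-invariant convention (\ref{Z-G-fix}), and in the $(n,v,\ve)=(\mathrm{even},\mathrm{odd},1)$ case, where $\F_{n,v,\ve}$ is empty under the original definition and must be replaced by the rectification (\ref{rectify}) before the computations above apply verbatim.
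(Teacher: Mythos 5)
Your proposal follows essentially the same route as the paper: let $\U(\mathfrak{sl}_n^\theta)$ act on $W_{n,v,\ve}\subseteq\H_*\!\left(\pi^{-1}(0)\right)$ via the generators' images under $\phi_v$ in $\H(Z_{n,v,\ve})$, then evaluate each convolution on $[\F_{\mbf d,\ve}]$ using~\cite[Thm.~2.7.26]{CG97} and~\cite[Lem.~8.5]{N98}, so that the structure constant is the signed Euler characteristic of the smooth fiber of the first projection on the correspondence $Y^\pm_{i,\mbf d,\ve}$ (an ordinary Grassmannian away from the middle, a one-step isotropic Grassmannian at the middle); the fact that these operators satisfy the defining relations is then immediate from Proposition~\ref{Schur-def}. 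There are two small differences. For the middle generator in the $n$-even case, the paper does not redo the convolution with the corrected definition of $\e_{r,\ve}$; it instead reduces to the already-established $n$-odd action via the rank-one identity (\ref{t-act}) applied to $\e_{r,\ve}=\big(\e'_{r,\ve}\f'_{r,\ve}-\h'_{r,\ve}\big)|_{Z_{n,v,\ve}}$ seen inside $\H(Z_{n+1,v,\ve})$, which is cleaner than the sign bookkeeping you anticipate and is in fact the mechanism you allude to via Proposition~\ref{t-explicit}. Second, you spell out the irreducibility argument (one-dimensional $\h$-weight spaces, strictly positive transition coefficients whenever $\mbf d^\pm_{i,\ve}\neq\O$, so the module is cyclic from the extremal class $[\F_{\mbf d^0,\ve}]$ with $\mbf d^0_j=2d\,\delta_{j,r+1}$); the paper leaves this implicit, and your explanation of it is correct, noting only that for $n$ even one should observe that the diagonal eigenvalues of $\e_{r,\ve}$ supply the datum $d_r$ that $\h_{r,\ve}=0$ no longer carries. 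Your remarks about the very even case and the $(n,v,\ve)=(\mathrm{even},\mathrm{odd},1)$ rectification (\ref{rectify}) are appropriate safeguards even though the paper's reduction to $n$ odd silently absorbs the latter.
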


\begin{proof}
Assume that $n$ is odd. 
Let $\U(\mathfrak{sl}^{\theta}_n)$ act on $W_{n,v,\ve}$ via  
$\phi_v(\e_{i,\ve}) $, $\phi_v(\f_{i,\ve})$ and $\phi_v(\h_{i,\ve}) $. 
We only need to check that the structure constants of the actions are of the desired ones.

For any $\mbf d\in \Lambda_{v}$, we define
\begin{align*}
Y^+_{i,\mbf d,\ve}  & =\{ (F, F')\in \F_{\mbf d^+_{i,\ve}} \times \F_{\mbf d,\ve} | F_i \overset{1}{\supset} F'_i, F_j= F'_j, \forall j\neq i, n-i\}, \ \mbox{if} \ d_{i+1} \geq 1+\delta_{i,r}.\\
Y^-_{i,\mbf d,\ve} & =\{ (F, F')\in \F_{\mbf d^-_{i,\ve}} \times \F_{\mbf d,\ve} | F_i\overset{1}{\subset} F'_i, F_j=F'_j,\forall j\neq i, n-i\},\ \mbox{if} \ d_{i}\geq 1.
\end{align*}
Note that
$
Y^-_{i,\mbf d,\ve} = Y^+_{\theta(i), \mbf d,\ve}, \ \mbox{where}\ \theta(i) = n-i.
$
For all $1\leq i\leq n-1$, we have
\begin{align*}
\phi_v(\e_{i,\ve}) &= \sum_{\mbf d\in \Lambda_{n,v}: d_{i+1} \geq 1} 
(-1)^{ d_i+\delta_{i,r+1} (\delta_{-1,(-1)^v}  + 1 -\delta_{1,\ve})}
[T^*_{Y^+_{i,\mbf d,\ve}} ( \F_{\mbf d^{+}_{i,\ve},\ve}\times \F_{\mbf d,\ve})],\\
\phi_v(\f_{i,\ve}) & = \sum_{\mbf d\in \Lambda_{n,v}: d_{i+1} \geq 1}  (-1)^{d_{i+1} +\delta_{i,r} (\delta_{-1,(-1)^v}  + 1 -\delta_{1,\ve}) }
[T^*_{Y^-_{i,\mbf d,\ve}} ( \F_{\mbf d^{-}_{i,\ve},\ve}\times \F_{\mbf d,\ve})],\\
\phi_v(\h_{i,\ve}) & = \sum_{\mbf d\in \Lambda_{n,v}} ( d_{i} - d_{i+1} )
[T^*_{\Delta}(\F_{\mbf d,\ve}\times \F_{\mbf d,\ve})].
\end{align*}
Now applying~\cite[Thm. 2.7.26]{CG97} and~\cite[Lemma 8.5]{N98}, we obtain the desired results.
In particular, the nontrivial case is the third formula when $i=r$. In this case, we have from the rank one calculus that 
\begin{align*}
\f_{r,\ve}  * [\F_{\mbf d,\ve}] 
=\chi ( \Gr_{1,\ve}^{d_{r+1}+2} ) [\F_{\mbf d,\ve}]
= (d_{r+1} +2 - \delta_{1, \ve} \delta_{-1, (-1)^{v}} ) [\F_{\mbf d^+_{r+1,\ve}, \ve}]
=(\mbf d^+_{r+1, \ve})_{r+1} 
[\F_{\mbf d_{r+1,\ve}^+, \ve}].
\end{align*}
The proposition is thus proved for $n$ odd.

The $n$-being-even case is due to the $n$-being-odd case and  (\ref{t-act}). The proof is finished.
\end{proof}


Recall the algebra $\mathfrak{sl}_n^{\theta}=\langle e_{i,\theta}, h_{i,\theta}\rangle$ and the involution $\tau$ from Section~\ref{Pre}.
Let $\mbf d^0\in \Lambda_v$ such that $\mbf d^0_i= 2d\delta_{i, r+1}$.
By using Proposition~\ref{n-W}, a quick computation  yields via $\phi_v$: for all $1\leq i\leq r$
\begin{align*}
\begin{split}
\tau(h_{i,\theta})* [\F_{\mbf d^0,\ve}] = 2d \delta_{i, r} [\F_{\mbf d^0,\ve}],\
\tau(h'_{i,\theta})* [ \F_{\mbf d^0,\ve}] = 2d \delta_{i, r} [\F_{\mbf d^0,\ve}],\
\tau(e_{i,\theta}) * [\F_{\mbf d^0,\ve}] =0.
\end{split}
\end{align*}
Let ${}^{\tau}W_{n, v,\ve}$ denote the $\mathfrak{sl}_n^{\theta}$-module obtained from $W_{n, v,\ve}$ twisted by $\tau$ in (\ref{tau}). 
We see that for $n$ odd,  ${}^{\tau}W_{n, v,\ve}$ is the finite dimensional irreducible representation $L'(\omega; \omega')$ 
with $\omega=\omega'=(0,\cdots, 0, 2d)$. 
In the same manner, one can check that $W_{n, v,\ve} \cong L'( (d,0,\cdots,0), (2d, 0,\cdots,0))$.

\begin{rem}
Note that the involution $(x, F, F') \mapsto (x, F', F)$ on $Z_{n, v,\ve}$ defines an anti-involution $\tau$ on $\H_*(Z_{n, v,\ve})$ such that
$\tau(\e_{i,\ve}) = \f_{i,\ve}$ and $\tau(\h_{i,\ve}) =\h_{i,\ve}$.
Hence this anti-involution is not the counterpart of (\ref{tau}).  
\end{rem}

\end{document}